\newdimen\bibspace
\renewenvironment{thebibliography}[1]{%
 \section*{\refname 
       \@mkboth{\MakeUppercase\refname}{\MakeUppercase\refname}}%
     \list{\@biblabel{\@arabic\c@enumiv}}%
          {\settowidth\labelwidth{\@biblabel{#1}}%
           \leftmargin\labelwidth
           \advance\leftmargin\labelsep
           \itemsep\bibspace
           \parsep\z@skip     %
           \@openbib@code
           \usecounter{enumiv}%
           \let\p@enumiv\@empty
           \renewcommand\theenumiv{\@arabic\c@enumiv}}%
     \sloppy\clubpenalty4000\widowpenalty4000%
     \sfcode`\.\@m}
    {\def\@noitemerr
      {\@latex@warning{Empty `thebibliography' environment}}%
     \endlist}
\newtheorem{thm}{Theorem}[section]
\newtheorem{lem}[thm]{Lemma}
\newtheorem{prop}[thm]{Proposition}
\newtheorem{defn}[thm]{Definition}
\newtheorem{cor}[thm]{Corollary}
\newtheorem{rem}[thm]{Remark}
\def\XXint#1#2#3{{\setbox0=\hbox{$#1{#2#3}{\int}$}
  \vcenter{\hbox{$#2#3$}}\kern-.5\wd0}}
                \newcommand{\lda}{\lambda}
\newcommand{\om}{\Omega}                \newcommand{\pa}{\partial}
\newcommand{\va}{\varepsilon}           \newcommand{\ud}{\mathrm{d}}
\newcommand{\be}{\begin{equation}}      \newcommand{\ee}{\end{equation}}
\newcommand{\R}{\mathbb{R}}
\begin{document}

\title{\textbf{Stability of the separable solutions for a nonlinear boundary diffusion problem}
\bigskip}

\author{\medskip  Tianling Jin\footnote{T. Jin is partially supported by Hong Kong RGC grants GRF 16306320 and GRF 16303822, and National Natural Science Foundation of China grant 12122120.}, \  \
Jingang Xiong\footnote{J. Xiong is partially supported by National Natural Science Foundation of China grants 11922104 and 11631002.}, \  \  Xuzhou Yang     }

\date{\today}

\maketitle

\begin{abstract}
In this paper, we study a nonlinear boundary diffusion equation of porous medium type arising from a boundary control problem.  We give a complete and sharp characterization of the asymptotic behavior of its solutions, and prove the stability of its separable solutions.
\end{abstract}

{\small \textbf{Keywords:} boundary diffusion, blow up, extinction, asymptotics, rate of convergence}

\smallskip

{\small \textbf{MSC(2020):}  Primary 35K57; Secondary 35K55, 35B40, 35R11}

\bigskip

\textbf{Declarations of interest}: none.

\tableofcontents

\section{Introduction}

Let $ \Omega \subset \R^n$, $n \geq 2 $,  be a bounded smooth domain and $0<p<\infty$ be a real number. We consider the nonlinear boundary diffusion problem
\begin{equation}\label{1}
 \left\{
\begin{aligned}
  \Delta u  &=   0  \quad  \mbox{in }     \Omega\times (0,\infty),     \\
   \partial_{t}u^{p}&=   -  \partial_{\nu}u  -  a u     \quad  \mbox{on }     \partial \Omega\times (0,\infty),
\end{aligned}
\right.
\end{equation}
with the initial condition
\be \label{eq:initial}
u(\cdot,0) =   u_{0}(\cdot) > 0       \quad  \mbox{on }     \partial \Omega,
\ee
where $\Delta $ is the Laplace operator with respect to the spatial variable $x\in \om$, $\nu$ is the unit outer normal to $\pa \om$, $\pa_{\nu}$ is the outer normal derivative, $a\in C^\infty(\pa \om)$, and $u_0 \in C^\infty(\pa \om) $ is a positive function. This nonlinear boundary  diffusion problem arises from a boundary control problem; see Duvaut-Lions \cite{DL1972} and  Athanasopoulos-Caffarelli \cite{2010Continuity}. When $n\ge 3$, $p=\frac{n}{n-2}$ and $a=-\frac{n-2}{2(n-1)}H_{\om}$ with $H_{\om}$  being the mean curvature of $\om$, it is the unnormalized boundary Yamabe flow; see  Brendle \cite{brendle2002generalization} and Almaraz \cite{Alm}. If we denote $\mathscr{B}:  H^{\frac12}(\partial \Omega) \rightarrow H^{-\frac12}(\partial \Omega)$ as the Dirichlet to Neumann map, that is, for any $f \in H^{\frac12}(\partial \Omega) $,
\begin{equation}\label{eq:DN}
\mathscr{B}f:= \frac{\partial}{\partial \nu}F\Big{|}_{\partial \Omega},
\end{equation}
where $F$ satisfies $\Delta F=0$ in $\Omega$ and $F=f$ on $\pa \om$, then the nonlinear boundary diffusion equation \eqref{1} can be rewritten as
\begin{equation}\label{eq:DtNdiffusion}
   \partial_{t}u^{p}=   -  \mathscr{B} u  -  a u     \quad  \mbox{on }     \partial \Omega\times (0,\infty).
\end{equation}
Therefore, the equation \eqref{1} can be viewed as a nonlinear evolution equation  with a nonlocal diffusion operator on the manifold $\partial\Omega$ of  principal symbol the same as the $1/2-$Laplace operator.  

If $\Omega=\R^n_+=\{x=(x',x_n): x_n>0\}$ and $a\equiv 0$, then the equation \eqref{1} becomes the fractional porous medium equation on the  Euclidean space 
\begin{equation}\label{eq:fpme1}
   \partial_{t}u^{p}=   -  (-\Delta)^{\sigma}u    \quad  \mbox{on }     \R^{n-1}\times (0,\infty)
\end{equation}
with $\sigma=1/2$. This equation \eqref{eq:fpme1} with $\sigma=1/2$ has been systematically studied by de Pablo-Quir\'os-Rodr\'iguez-V\'azquez \cite{PQRV}, which was later extended to the general fractional Laplace operator $(-\Delta)^{\sigma}$ for all $\sigma\in (0,1)$ by themselves in \cite{PQRV2}. V\'azquez \cite{Vaz}  established the existence, uniqueness and main properties of the fundamental solutions to \eqref{eq:fpme1}, and obtained some asymptotics properties. When $p$ is the critical Sobolev exponent, asymtotic behavior was studied by the first two authors in \cite{JX2014}. Bonforte-V\'azquez \cite{BV2014} obtained weighted global integral estimates for \eqref{eq:fpme1} that allow to establish existence of
solutions for classes of large data, and obtain quantitative pointwise lower estimates of the positivity of the solutions, depending only on the norm of the initial data in a certain ball. These estimates were later improved by V\'azquez-Volzone \cite{JL2013Optimal}. V\'azquez-de Pablo-Quir\'os-Rodr\'iguez \cite{VPQR} proved that weak solutions of \eqref{eq:fpme1} are classical for all positive times. More general nonlocal porous medium equations were studied in Bonforte-Endal \cite{BE22}. There are also studies on the fractional porous medium equation \eqref{eq:fpme1} posed in bounded domains with the Dirichlet condition:
\begin{equation}\label{eq:fpme2}
 \left\{
\begin{aligned}
   \partial_{t}u^{p}&=   -  (-\Delta)^{\sigma}u    \quad  \mbox{on }     \mathcal{V}\times (0,\infty),\\
   u&=  0    \quad  \mbox{on }    (\R^{n-1}\setminus \mathcal{V})\times (0,\infty),
\end{aligned}
\right.
\end{equation}
where $\mathcal{V}\subset \R^{n-1}$ is a bounded domain. The wellposedness and regularity of this equation, as well as their extensions to more general nonlocal porous medium equations, have been obtained by Bonforte-Sire-V\'azquez \cite{BSV2015},  Bonforte-V\'azquez \cite{BV2015}, Bonforte-Figalli-Ros-Oton \cite{BFR},  Bonforte-Figalli-V\'azquez \cite{BFV18}, Bonforte-Ibarrondo-Ispizua \cite{BII22}, Brasco-Volzone \cite{BV22} and Franzina-Volzone \cite{FV23}.  Note that $\mathcal{V}$ in \eqref{eq:fpme2} is a Euclidean open set in $\R^{n-1}$, but not a boundary of a domain $\Omega\subset\R^{n}$. Therefore, even though the equations \eqref{1} and \eqref{eq:fpme2} share some similarities in principles, they are two different equations. 

In this paper, we would like to study the large time behavior of the solutions to the nonlinear boundary  diffusion equation \eqref{1} with Sobolev subcritical exponents:
\begin{equation}\label{2}
\left\{
  \begin{aligned}
  & p\in (0,1)\cup (1,+\infty)  \quad  &\mbox{if }   n=2, \\
  &  p\in (0,1)\cup \left(1,  \frac{n}{n-2}\right) \quad   &\mbox{if }    n\geq3.
\end{aligned}
\right.
 \end{equation}
The dynamics of the solutions to \eqref{1}--\eqref{eq:initial} will depend on the range of $p$ and the sign of the first eigenvalue of the operator $\mathscr{B} +a $ defined by
\begin{equation}\label{eq:lambda1}
\lda_1= \inf_{u\in H^1(\om)}\left\{\displaystyle \int_{\om} |\nabla u|^2\,\ud x + \int_{\pa \om} au^2\,\ud S: \displaystyle \int_{\pa \om} u^2\,\ud S =1\right\},
\end{equation}
where $\,\ud S$ is the area element of $\partial\Omega$.
\begin{thm}\label{thm:wellposedness}
Let $n\ge 2$,  $ \Omega \subset \R^n$ be a bounded smooth domain,   $p$ satisfy \eqref{2} and $a\in C^\infty(\pa \om)$. Suppose that $u_0\in C^\infty(\pa \om)$ and $u_0$ is positive. Then  \eqref{1}--\eqref{eq:initial} admits a unique positive  smooth solution $u $ on $  \overline\Omega\times[0,T^*)$, where $[0,T^*)$, with $0<T^*\le \infty$, is the largest interval on which  $u$ is positive.  Moreover, there exists $C>0$ depending only on $n,p,a,\Omega, \max_{\partial\Omega}u_0$ and $\min_{\partial\Omega}u_0$ such that
\begin{itemize}
\item[(i)] if $\lambda_1(p-1)< 0$, then $T^*=\infty$ and
\[
\frac{1}{C}(t+1)^\frac{1}{p-1}\le u(x,t)\le C (t+1)^\frac{1}{p-1}\quad\mbox{on } \overline\Omega\times[0,\infty);
\]

\item[(ii)] if $\lambda_1=0$, then $T^*=\infty$ and 
\[
\frac{1}{C}\le u(x,t)\le C\quad\mbox{on } \overline\Omega\times[0,\infty);
\]

\item[(iii)] if $\lambda_1(p-1)>0$, then $T^*<\infty$ and 
\[
\frac{1}{C}(T^*-t)^{\frac{1}{p-1}}\le u(x,t)\le C(T^*-t)^{\frac{1}{p-1}}\quad\mbox{on } \overline\Omega\times[0,T^*).
\] 
\end{itemize}

\end{thm}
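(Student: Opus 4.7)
The plan is to establish local existence and uniqueness first, and then derive the quantitative bounds in (i)--(iii) by comparing $u$ with explicit barriers of the separable form $f(t)\phi_1(x)$, where $\phi_1>0$ is the first eigenfunction realizing $\lambda_1$ in \eqref{eq:lambda1}.

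For local existence I would work with the Dirichlet-to-Neumann reformulation \eqref{eq:DtNdiffusion}: setting $w=u^p$, the problem reduces to $\partial_t w=-\mathscr{B}(w^{1/p})-aw^{1/p}$ on $\partial\Omega$, a quasilinear nonlocal parabolic equation on $\partial\Omega$ which is uniformly parabolic as long as $w$ stays bounded away from $0$ and $\infty$. Since $u_0$ is smooth and strictly positive, standard parabolic Schauder theory adapted to the order-one operator $\mathscr{B}$ (as in the boundary Yamabe flow work of Brendle \cite{brendle2002generalization} and Almaraz \cite{Alm}) yields a unique positive smooth solution on a maximal interval $[0,T^*)$, where $T^*$ is the first time at which $\min u(\cdot,t)\to 0$ or $\max u(\cdot,t)\to\infty$. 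Uniqueness follows from the parabolic comparison principle for this equation; at a positive maximum of $u-\tilde u$ on $\partial\Omega$ the Hopf lemma applied to the harmonic extension of $u-\tilde u$ in $\Omega$ forces $\mathscr{B}(u-\tilde u)\ge 0$, and an exponential shift absorbing the $a(u-\tilde u)$ term gives the contradiction in the usual way.

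For the quantitative bounds, extend $\phi_1$ harmonically to $\Omega$. Then $v(x,t)=f(t)\phi_1(x)$ is automatically harmonic in $\Omega$ for any positive smooth $f$, and using $\mathscr{B}\phi_1+a\phi_1=\lambda_1\phi_1$ one computes
\begin{equation*}
\partial_t v^p+\mathscr{B}v+av \;=\; f\phi_1\bigl[pf^{p-2}f'\phi_1^{p-1}+\lambda_1\bigr]\quad\mbox{on }\partial\Omega.
\end{equation*}
Replacing $\phi_1^{p-1}$ by its $\max$ or $\min$ on $\partial\Omega$ reduces the sub-/super-solution condition to the autonomous ODE $pf^{p-2}f'=-\lambda_1 K_\pm$, whose explicit solutions are $f(t)=[c_\pm(t+t_0)]^{1/(p-1)}$ when $\lambda_1(p-1)<0$, $f\equiv\mathrm{const}$ when $\lambda_1=0$, and $f(t)=[c_\pm(t^\sharp-t)]^{1/(p-1)}$ with finite $t^\sharp$ when $\lambda_1(p-1)>0$. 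Choosing the amplitudes so that $v(\cdot,0)$ dominates or is dominated by $u_0$ and invoking the comparison principle gives the two-sided bounds in (i) and (ii) at once, and in (iii) forces $T^*<\infty$ together with the sharp upper bound $u\le C(T^*-t)^{1/(p-1)}$.

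The main obstacle is the lower bound in case (iii), since the naive subsolution $f(t)\phi_1$ has its own extinction time $t_2^\sharp$ which need not coincide with $T^*$. To close the gap I would iterate: at a sequence of base times $t_k\to T^*$, use a boundary Harnack-type inequality for the harmonic extension together with the already-established sharp upper bound to produce a quantitative lower bound $\min_{\partial\Omega} u(\cdot,t_k)\ge c(T^*-t_k)^{1/(p-1)}$, and then restart the separable subsolution from $t_k$ with this amplitude; the restarted subsolution then has extinction time at least $T^*$, so comparison on $[t_k,T^*)$ delivers the sharp lower bound after letting $t_k\to T^*$. Establishing the Harnack transfer from the sharp maximum to a sharp minimum, which reflects the nonlocality of $\mathscr{B}$, is the genuinely nontrivial ingredient; the algebra of the barriers and the comparison principle itself are essentially routine.
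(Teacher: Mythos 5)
Your strategy for local existence and uniqueness, and for the two-sided bounds in cases (i) and (ii), matches the paper in spirit. The paper's short-time existence (Theorem~\ref{thm:short}) is also obtained from the parabolic estimates of Brendle (Lemma~\ref{L2}) plus the implicit function theorem, and its barrier argument (Proposition~\ref{cor:boundednessbefore}) compares $u$ with separable functions $b_c(t)\varphi(x)$. You use the linear eigenfunction $\phi_1$ extended harmonically, while the paper uses the positive solution $\varphi$ of the nonlinear steady state \eqref{eq:steady}; for cases (i)--(ii) both work, because the conclusions are not tied to the exact extinction/blow-up time. (The paper's choice has a payoff later: in case (i) the same $\varphi$ appears on both sides of the two-sided estimate, which is what makes the sharp $O(e^{-\tau})$ rate in Theorem~\ref{mainTH}(i) a one-line corollary.)

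Case (iii) is where the proposal has a genuine gap, on both sides of the estimate. For the \emph{upper} bound, your supersolution $f(t)\phi_1$ with $f(t)=K(T^\sharp-t)^{1/(p-1)}$ is fitted at $t=0$ with $K$ chosen so that $f(0)\phi_1\ge u_0$. This determines $T^\sharp$, and comparison only gives $u\le C(T^\sharp-t)^{1/(p-1)}$ with $T^\sharp\ge T^*$; you cannot conclude $u\le C(T^*-t)^{1/(p-1)}$ unless you can identify $T^\sharp$ with $T^*$, which a barrier alone does not accomplish. In fact, as $t\to T^*$ the supersolution stays bounded \emph{away from zero} unless $T^\sharp=T^*$, so it gives no information about the rate of extinction. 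In the paper the sharp upper bound is \emph{not} obtained by comparison at all: it goes through the integral decay $c(T^*-t)^{1/(p-1)}\le\|u(\cdot,t)\|_{L^{p+1}(\partial\Omega)}\le C(T^*-t)^{1/(p-1)}$ (Proposition~\ref{prop:integralbound}, proved via a convexity/monotonicity argument for the Rayleigh-type quotient $I(t)$ and the $L^{p+1}$-dichotomy of Proposition~\ref{prop:integralT^*}), followed by Moser iteration in the rescaled variable $w$ (Proposition~\ref{P7-1}).

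For the \emph{lower} bound, the ``Harnack transfer'' you invoke is exactly the delicate step, and it is not free. An elliptic-type Harnack inequality for $u(\cdot,t)$ (Lemma~\ref{lem:weakharnack}) requires a pointwise one-sided bound on $\partial_\nu u/u$ in terms of $u^{p-1}$, which equivalently is a one-sided bound on the curvature-like quantity $H=-p\,\partial_t u/u$. The paper obtains this by showing that $H$ satisfies a parabolic inequality (the computation \eqref{eq:auxH}) whose maximum principle propagates an initial bound forward in time (Propositions~\ref{prop:lower1}--\ref{prop:upper1}); without this ingredient the Harnack inequality at time $t$ simply does not follow. Moreover, even granting the Harnack inequality, it only gives $\inf_{\partial\Omega}u(\cdot,t)\gtrsim\|u(\cdot,t)\|_{L^{1}(\partial\Omega)}$; converting this to $\gtrsim(T^*-t)^{1/(p-1)}$ requires the sharp integral decay rate at $T^*$, which again is Proposition~\ref{prop:integralbound} and does not come from barriers or the Harnack inequality alone. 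The restarting/iteration you propose runs into the same issue: to restart a subsolution at time $t_k$ with the correct amplitude you already need $\min u(\cdot,t_k)\ge c(T^*-t_k)^{1/(p-1)}$ with a constant large enough for the restarted barrier to have extinction time $\ge T^*$, and that quantitative constant is precisely what is not yet available. In short, the machinery that bridges the crude barrier bound to the sharp two-sided bound in case (iii) --- the $H$-evolution, the $L^{p+1}$ dichotomy, the integral decay via a convexity argument, Moser iteration, and a quantitative Hopf lemma (Lemma~\ref{lem:normal derivative lower bound}) --- is missing from the proposal.
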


Part (iii) in Theorem \ref{thm:wellposedness} shows that  the solution will be extinct in finite time if $\lambda_1>0$ and $p>1$, and will blow up in finite time if $\lambda_1<0$ and $0<p<1$. The assumption of $p$ being Sobolev subcritical is used crucially in Propositions \ref{prop:existencesteady} and \ref{P7-1}, which will be used to prove Theorem \ref{thm:wellposedness}.

The  quantitative estimates in Theorem \ref{thm:wellposedness} can be illustrated via the separable solutions 
\[
U_c(x,t)=\varphi(x)b_c(t)
\] 
of \eqref{1}, where 
\begin{equation}\label{eq:seperatet}
b_c(t)= \left\{
  \begin{aligned}
(c+t)^{\frac{1}{p-1}} &\quad\mbox{if}\quad \lda_1(p-1)<0,   \\
c^{\frac{1}{p-1}}  &\quad\mbox{if}\quad \lda_1=0,   \\
(c-t)^{\frac{1}{p-1}}& \quad\mbox{if}\quad \lda_1(p-1)>0
\end{aligned}
\right.
\end{equation} 
is the solution of the ODE
\[
\partial_t [b(t)]^p= -\mathrm{sgn}(\lda_1)\frac{p}{|p-1|} b(t)
\] 
with the initial data $b(0)=c^{\frac{1}{p-1}}$ for some $c>0$, and $\varphi$ is a positive solution of 
\be \label{eq:steady}
-\Delta \varphi=0 \quad \mbox{in }\om, \quad \pa_\nu \varphi+a \varphi = \mathrm{sgn}(\lda_1)\frac{p}{|p-1|}\varphi^p \quad \mbox{on }\pa \om
\ee
with $ \mathrm{sgn}(\lda_1)\in \{-1,0,1\}$ denoting as the sign of $\lda_1$. The equation \eqref{eq:steady} is equivalent to
\begin{equation}\label{eq:stationary}
\mathscr{B} \varphi +a \varphi- \mathrm{sgn}(\lda_1)\frac{p}{|p-1|}\varphi^p=0 \quad \mbox{on }\pa \om.
\end{equation}
Since $p$ is Sobolev subcritical, the existence of $\varphi$ can be obtained by the variational method; see Proposition \ref{prop:existencesteady}. If $\lambda_1(p-1)<0$, then the solution of \eqref{eq:steady} is unique; see Proposition \ref{prop:unique}. Following the definition of Adams-Simon \cite{AS} (pages 229-230), we call a solution $\varphi$ of \eqref{eq:stationary} is integrable if for every nonzero $\phi\in \mathrm{Ker}\mathcal{L}_\varphi:=\{\phi\in H^{\frac12}(\pa\Omega): \mathcal{L}_\varphi\phi=0\}$, where
\begin{equation}\label{eq:linearized}
\mathcal{L}_\varphi:=\mathscr{B}+a- \mathrm{sgn}(\lda_1)\frac{p^2}{|p-1|}\varphi^{p-1},
\end{equation}
there exists a family $\{\varphi_s\}_{s\in(-1,1)}$ of solutions to \eqref{eq:stationary} such that $\varphi_s\to\varphi$ in $C^2(\partial\Omega)$ and $(\varphi_s-\varphi)/s\to\phi$ in $L^2(\partial\Omega)$ as $s\to 0$.

Furthermore, the above separable solutions characterize the limits of all the solutions in Theorem \ref{thm:wellposedness} as $t\to T^*$. To show the stability of these separable solutions, we shall use the  following changes of variables: 
\begin{equation} \label{eq:changeofvariable}
 \left\{
\begin{array}{lll}
w(x,\tau)&=u(x,t)/b_1(t) \ \mbox{ and }\  t=e^{\tau}-1,  \quad  &\mbox{if } \lda_1(p-1)<0;\\[1mm]
w(x,\tau)&=u(x,t)/b_1(t) \ \mbox{ and }\  t=\tau,   \quad &\mbox{if } \lda_1=0;\\[1mm]
w(x,\tau)&=u(x,t)/b_{T^*}(t) \ \mbox{ and }\  t=T^*(1-e^{-\tau}),   \quad &\mbox{if }\lda_1(p-1)>0. \\
\end{array}
\right.
\end{equation}
Then 
\begin{equation}\label{eq:scale}
 \left\{
\begin{aligned}
 \Delta w &= 0 \quad  \mbox{in }  \Omega\times(0, \infty ),        \\
\partial_{\tau} w^p&=-  \partial_{\nu}w  -  a w +    \mathrm{sgn}(\lda_1)\frac{p}{|p-1|}w^{p}   \quad  \mbox{on }  \partial \Omega \times(0, \infty ).
\end{aligned}
\right.
\end{equation}
It follows from Theorem \ref{thm:wellposedness} that $1/C\le w\le C$ on  $\overline\Omega\times[0,\infty)$ for  some $C\ge 1$ which depends only on $n,p,a,\Omega, \max_{\partial\Omega}u_0$ and $\min_{\partial\Omega}u_0$.

\begin{thm}\label{mainTH}
Let $u$ be the one in Theorem \ref{thm:wellposedness} and $w$ be defined as in \eqref{eq:changeofvariable}. Then $w(x,\tau)$ converges to a positive solution $\varphi$ of \eqref{eq:steady} in $C^{2}{(\overline\om)}$ as $\tau\to \infty$. Moreover, there exist  $C>0$, $\delta>0$ and $ \gamma>0$, all of which depend  only on $n,p,a,\Omega, \max_{\partial\Omega}u_0$ and $\min_{\partial\Omega}u_0$, such that 
\begin{itemize}
\item[(i)] if $\lda_1(p-1)< 0$, then 
 \begin{equation}\label{eq:convergencerate1}
\|w(\cdot,\tau)-\varphi\|_{C^{2}(\overline\Omega)} \leq  Ce^{- \tau} \quad \forall \  \tau>1;
\end{equation}
\item[(ii)] if $\lda_1= 0$, then 
 \begin{equation}\label{eq:convergencerate2}
\|w(\cdot,\tau)-\varphi\|_{C^{2}(\overline\Omega)} \leq  Ce^{-\gamma \tau} \quad \forall \  \tau>1;
\end{equation}
\item[(iii)] if $\lda_1(p-1)>0$, then either
 \begin{equation}\label{eq:convergencerate3}
C\tau^{-1}\le \| w(\cdot,\tau)-\varphi \|_{C^2(\overline\Omega)} \le C\tau^{-\delta}  \quad \forall \  \tau >1
\end{equation}
or
 \begin{equation}\label{eq:convergencerate4}
\| w(\cdot,\tau)-\varphi \|_{C^2(\overline\Omega)} \le Ce^{-\gamma \tau}    \quad \forall \  \tau > 1;
\end{equation}
\item[(iv)] if $\lda_1(p-1)>0$ and $\varphi|_{\partial\Omega}$ is integrable, then \eqref{eq:convergencerate4} holds.
\end{itemize}

\end{thm}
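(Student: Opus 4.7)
The plan is to realize \eqref{eq:scale} as the weighted gradient flow of the Lyapunov functional
\[
F(w)=\tfrac12\int_\Omega|\nabla w|^2\,\ud x+\tfrac12\int_{\pa\Omega}a w^2\,\ud S-\mathrm{sgn}(\lambda_1)\tfrac{p}{|p-1|(p+1)}\int_{\pa\Omega}w^{p+1}\,\ud S,
\]
whose critical points among functions harmonic in $\Omega$ are exactly the positive solutions of \eqref{eq:steady}. A direct computation using $\Delta w=0$ and the second line of \eqref{eq:scale} gives
\[
\tfrac{d}{d\tau}F(w(\cdot,\tau))=-\int_{\pa\Omega}p\,w^{p-1}(\pa_\tau w)^2\,\ud S\le 0,
\]
so $F(w(\cdot,\tau))$ is nonincreasing and, by the two-sided bounds of Theorem~\ref{thm:wellposedness}, bounded. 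Combining these uniform bounds with parabolic Schauder estimates for the Dirichlet-to-Neumann problem yields $C^2(\overline\Omega)$-precompactness of $\{w(\cdot,\tau)\}_{\tau\ge1}$, and any subsequential limit along $\tau_k\to\infty$ is a positive solution $\varphi$ of \eqref{eq:steady}.

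For case (i), since Proposition~\ref{prop:unique} identifies the stationary solution uniquely, I would bypass \L{}ojasiewicz and argue by barriers. Choose $c_\pm>0$ so that $c_-^{1/(p-1)}\varphi\le u_0\le c_+^{1/(p-1)}\varphi$ on $\pa\Omega$, and apply the parabolic comparison principle to the pair $(u,U_{c_\pm})$ with $U_c=\varphi\,b_c$. This gives $b_{c_-}(t)\varphi\le u(\cdot,t)\le b_{c_+}(t)\varphi$ on $\overline\Omega\times[0,\infty)$, which after \eqref{eq:changeofvariable} becomes
\[
\Bigl(\tfrac{c_-+t}{1+t}\Bigr)^{1/(p-1)}\varphi\le w(\cdot,\tau)\le\Bigl(\tfrac{c_++t}{1+t}\Bigr)^{1/(p-1)}\varphi,\qquad t=e^\tau-1.
\]
The first-order expansion in $e^{-\tau}=1/(1+t)$ produces $\|w-\varphi\|_{L^\infty}\le Ce^{-\tau}$, and parabolic smoothing upgrades this to \eqref{eq:convergencerate1}.

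For cases (ii)-(iv) I would invoke the \L{}ojasiewicz-Simon gradient inequality in the spirit of Simon and Adams-Simon~\cite{AS}. The functional $F$ is real-analytic on a neighborhood of the positive cone in $H^{1/2}(\pa\Omega)$ because $w\mapsto w^{p+1}$ is analytic for $w>0$ and $\mathscr{B}$ is bounded linear, and its second variation at $\varphi$ is the self-adjoint Fredholm operator $\mathcal{L}_\varphi$. After a Lyapunov-Schmidt reduction onto $\mathrm{Ker}\,\mathcal{L}_\varphi$, one obtains, in a neighborhood of $\varphi|_{\pa\Omega}$ in $H^{1/2}$, exponents $\theta\in(0,1/2]$ and $\sigma>0$ with
\[
|F(w)-F(\varphi)|^{1-\theta}\le\sigma\bigl\|\mathscr{B}w+aw-\mathrm{sgn}(\lambda_1)\tfrac{p}{|p-1|}w^p\bigr\|_{L^2(\pa\Omega)}.
\]
Combined with the dissipation identity and the uniform bounds on $w^{p-1}$, this forces $\pa_\tau w\in L^1_\tau L^2_x$, whence convergence to a single limit $\varphi$, and the standard ODE comparison yields $\|w-\varphi\|_{L^2(\pa\Omega)}\le C\tau^{-\theta/(1-2\theta)}$ when $\theta<1/2$ and $\le Ce^{-\gamma\tau}$ when $\theta=1/2$; Schauder regularity promotes these bounds to $C^2$. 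In case (ii) the critical set $\{c\psi_1:c>0\}$ is a smooth 1-manifold tangent to $\mathrm{Ker}(\mathscr{B}+a)$, so $\theta=1/2$; in case (iv) the integrability hypothesis is precisely the Adams-Simon criterion guaranteeing $\theta=1/2$; in case (iii) without integrability, one is left with $\theta\in(0,1/2]$, and the lower bound $C\tau^{-1}$ in \eqref{eq:convergencerate3} would be extracted from the reduced finite-dimensional ODE by showing that faster decay would force every element of $\mathrm{Ker}\,\mathcal{L}_\varphi$ to tangent an analytic family of solutions, contradicting non-integrability.

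The hardest part will be making the \L{}ojasiewicz-Simon inequality rigorous in this nonlocal boundary setting: verifying that $\mathcal{L}_\varphi:H^{1/2}(\pa\Omega)\to H^{-1/2}(\pa\Omega)$ is Fredholm of index zero, executing the Lyapunov-Schmidt reduction, and controlling the weighted dissipation $\int p\,w^{p-1}(\pa_\tau w)^2$ against the unweighted $L^2$-norm of $\pa_\tau w$ using the two-sided bounds on $w$. Equally delicate is the sharp lower bound in case (iii), which requires a dynamical center-manifold argument to rule out spuriously fast decay when $\varphi$ is genuinely non-integrable.
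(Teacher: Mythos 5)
Your proposal follows the paper's approach closely for everything except the lower bound in part~(iii). The Lyapunov functional you write down is equivalent (by integration by parts, using $\int_\Omega|\nabla w|^2=\int_{\pa\Omega}w\,\mathscr{B}w$ for harmonic $w$) to the boundary functional $G$ in \eqref{34}; the parabolic smoothing step to upgrade $L^2$ decay to $C^2$ decay is what Lemma~\ref{lem:highererror} does; case~(i) is handled by barriers exactly as in Proposition~\ref{cor:boundednessbefore}(i); and cases~(ii), (iv), and the $\tau^{-\delta}$ upper bound in~(iii) all go through the \L{}ojasiewicz--Simon inequality with $\theta\in(0,1/2]$ and the Adams--Simon integrability criterion, as you sketch. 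All of this matches the paper.

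The gap is in your treatment of the lower bound $C\tau^{-1}$ in~\eqref{eq:convergencerate3}. You propose to extract it ``by showing that faster decay would force every element of $\mathrm{Ker}\,\mathcal{L}_\varphi$ to tangent an analytic family of solutions, contradicting non-integrability.'' This is the wrong mechanism, for two reasons. First, the theorem only asserts a \emph{dichotomy} in case~(iii): it leaves open the possibility of exponential decay even when $\varphi$ is not integrable. You are implicitly trying to prove the stronger (and not claimed, and in fact unproven) converse that non-integrability \emph{forces} the algebraic branch, which is not what the lower bound requires. Second, the actual lower bound is a purely dynamical statement with no reference to integrability: the paper (Theorem~\ref{either fast or slower}) decomposes $h=w-\varphi$ into unstable, central, and stable eigen-modes of $\mathcal{L}_\varphi$ in the weighted space $L^2(\pa\Omega;\varphi^{p-1}\ud S)$, invokes a Merle--Zaag-type alternative (Lemma~4.6 of Choi--Haslhofer--Hershkovits~\cite{CHHrefine}) to conclude that either the stable modes dominate (giving exponential decay at the sharp rate $\gamma_p=\mu_k/p$) or the central modes dominate, and in the latter case bounds $\|N(h)\|\le C\|h_c\|^2$ and feeds this into the Riccati inequality $\frac{\ud}{\ud\tau}\|h_c\|\ge -C\|h_c\|^2$, which integrates to $\|h_c\|\ge C\tau^{-1}$. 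No Lyapunov--Schmidt reduction or center-manifold classification appears, and none is needed. If you want a self-contained blind proof of part~(iii), the missing ingredient is this orthogonal spectral decomposition plus the Merle--Zaag dichotomy lemma, not an analyticity/integrability contradiction.
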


\begin{rem}
The decay rate $e^{- \tau}$ in \eqref{eq:convergencerate1} is sharp, since the separable solution $(2+t)^{\frac{1}{p-1}}\varphi(x)$ of \eqref{1} indeed satisfies this decay rate. The exponential exponent $\gamma$ in \eqref{eq:convergencerate2} and \eqref{eq:convergencerate4} is in fact equal to $\gamma_p$ which is defined in \eqref{sharprate}; see Theorem \ref{either fast or slower}. Moreover, this $\gamma_p$ is the same as in the linear case, and thus, no better rate shall be expected in \eqref{eq:convergencerate2} and \eqref{eq:convergencerate4} in this degree of generality. Furthermore, we obtain a higher order expansion \eqref{eq:error-exp-decay-rate-classify} of the solution in the exponential decay case (in fact, one can expand it to arbitrary orders).
\end{rem}

When $\lambda_1>0$,  then the diffusion operator $\mathscr{B} +a $ is coercive, and our results are the analogues of those for the classical nonlinear diffusion equation in bounded domains:
\begin{equation}\label{eq:fde}
 \left\{
\begin{aligned}
& \partial_{t}u^{q}  =\Delta u \quad                           \mbox{in }   \Omega \times (0,\infty),\\
&u              =  0  \quad   \mbox{on } \partial \Omega \times [0,\infty).
\end{aligned}
\right.
\end{equation}
In the case of $0<q<1$, the equation \eqref{eq:fde} is called the porous medium equation. It is a slow diffusion equation, which means that if $u_0$ is compactly supported in $\Omega$, then the solution $u(\cdot,t)$ with such initial data will still be compactly supported in $\Omega$ at least for a short time. Suppose 
$u_0\in C^1(\overline\Omega)$, $u_0\ge 0$ in $\overline\Omega$,  $u_0=0$ on  $\partial\Omega$, and $u_0\not\equiv 0$  in $\Omega$, then part (i) in Theorem \ref{thm:wellposedness} and the estimate in Theorem \ref{mainTH} under the Lipschitz norm (after a waiting time) were proved by Aronson-Peletier \cite{AP1981}. See V\'azquez \cite{Vaz2004} for another similar stability result of the separable solutions to the porous medium equation. In \cite{JRX}, the first two authors and X. Ros-Oton recently proved the optimal regularity of the solution $u$, improved the convergence from the Lipschitz topology to the $C^{2,q}(\overline\Omega)$ topology and obtained its finer asymptotics.  In the case of $1<q<+\infty$, the equation \eqref{eq:fde} is called the fast diffusion equation. It has infinite speed of propagation, which means that if the initial data is nonnegative and not identically zero, then the solution will be positive everywhere in $\Omega$ immediately. It is known from the work of Sabinina \cite{Sabinina1,Sabinina2} that solutions of \eqref{eq:fde} with positive initial data will be extinct after a finite time $T^*>0$. If $q$ is a Sobolev subcritical exponent, then under the assumption that $\partial_t u, \nabla u, \nabla\partial_t u, \nabla^2 u\in C(\overline \om\times (0,T^*))$,  Berryman-Holland \cite{Berryman} proved the integral estimates in Theorem \ref{thm:wellposedness}  for the solutions of \eqref{eq:fde}, as well as the convergence in Theorem \ref{mainTH} but just along a sequence of times in $H^1_0(\Omega)$. In 2000, Feireisl-Simondon \cite{Eduard2000Convergence}  proved the uniform convergence without the regularity assumption. Later, Bonforte-Grillo-V\'azquez \cite{Grillo0Behaviour} proved  the uniform convergence of the  relative error, and Bonforte-Figalli \cite{2019Sharp} quantified the convergence rate of the relative error and obtained a sharp exponential rate in generic domains.  See also Akagi \cite{2021Sharprate} for a different proof. Recently, the first two authors proved Berryman-Holland's regularity assumption and established the optimal regularity in \cite{2019Optimal, JX22}. As applications, they improved the  uniform convergence of the relative error to be in the $C^2$ topology in \cite{2019Optimal},  and  proved the polynomial decay rate for all smooth domains  in \cite{JX20}.  More  recently, Choi-McCann-Seis \cite{CMS} proved that the relative error either decays exponentially with the sharp rate,  or else decays algebraically at a rate $1/t$ or slower. Furthermore, they obtained higher order asymptotics, which refines and confirms a conjecture of Berryman-Holland \cite{Berryman}. Besides these results, there are many other works on the quantitative properties of the solutions to the fast diffusion equation \eqref{eq:fde} in bounded domains, and we refer to DiBenedetto-Kwong-Vespri \cite{DKV}, Galaktionov-King \cite{GKing}, Bonforte-V\'azquez \cite{BVjfa, BV}, Akagi \cite{Akagi}, Jin-Xiong \cite{JX20}, Sire-Wei-Zheng \cite{SWZ} and the references therein.

When $\lambda_1<0$, then one may look for  analogues between \eqref{1} and 
\begin{equation}\label{eq:fde2}
 \left\{
\begin{aligned}
& \partial_{t}u^{q}  =\Delta u + c u\quad                           \mbox{in }   \Omega \times (0,\infty),\\
&u              =  0  \quad   \mbox{on } \partial \Omega \times [0,\infty),
\end{aligned}
\right.
\end{equation}
where $c$ is a positive constant larger than the first Dirichlet eigenvalue of the Laplace operator $\Delta$ on $\Omega$. Both the solution of \eqref{1} with $0<p<1$ and the solution of \eqref{eq:fde2} with $0<q<1$ blow up in finite time, where the latter conclusion was proved by Galaktionov \cite{Galaktionov81}. However, there is a notable difference on the uniform lower bound in part (iii) of Theorem \ref{thm:wellposedness}. If $\Omega$ is large, and the support of the initial data $u_0$ is small and sufficiently away from $\pa\Omega$, then  Theorem 1 of Galaktionov \cite{Galaktionov} shows that the blow up for the solution of \eqref{eq:fde2} is localized. That is, the support of the solution of \eqref{eq:fde2} stays uniformly away from $\pa\Omega$ up to the blow up time, so that it is impossible to prove proper uniform lower bounds for them. However, the solution of \eqref{1} blows up everywhere uniformly. The main reason why  the uniform lower bound holds for  the solution of \eqref{1} with $0<p<1$ is that the solution satisfies a quantitative positivity estimate in Theorem \ref{thm:infinitespeed} in the below. This is different from \eqref{eq:fde2} which fails to be so for $0<q<1$. This positive estimate implies the solutions of \eqref{1} has infinite speed of propagation on $\partial\Omega$, which is analogous to the phenomenon of infinite speed of propagation observed for porous medium equations with nonlocal operators in Euclidean space by de Pablo-Quir\'os-Rodr\'iguez-V\'azquez \cite{PQRV} and  Bonforte-Figalli-Ros-Oton \cite{BFR}. 

\begin{thm}\label{thm:infinitespeed}
Let $n\ge 2$, $ \Omega \subset \R^n$ be a bounded smooth domain, $p$ satisfy \eqref{2}, $a\in C^\infty(\pa \om)$, $u_0 \in C^\infty(\pa \om) $ be a  positive function, and $u$ be a smooth positive solution of  \eqref{1}--\eqref{eq:initial} on $\overline\Omega\times[0,T]$ for some $T>0$.  Then there exist $t_0>0$ and $\va_0>0$, both of which depend only on $n,p,a,\Omega$, $\max_{\partial\Omega} u_0$ and $\|u_0\|_{L^p(\partial\Omega)}$, but \emph{not} on $\min_{\partial\Omega} u_0$, such that
\[
u(x,t)\ge \va_0 t^{1/p}\quad\mbox{for all }(x,t)\in\overline\Omega\times[0,t_0].
\]  
\end{thm}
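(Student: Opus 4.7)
My plan is to establish a uniform lower bound on $m(t) := \min_{\partial\Omega} u(\cdot,t)$ of the form $m(t) \ge \varepsilon_0 t^{1/p}$. The mechanism behind this is nonlocal: whenever $m(t)$ is small while the bulk integral $\int_{\partial\Omega}u^p\,dS$ remains bounded below, the Dirichlet-to-Neumann operator $\mathscr{B}$ forces the minimum to grow at a controlled rate, and quantifying this via a Hopf-type estimate yields the desired bound.

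First I would derive an a priori $L^\infty$ bound depending only on $\max_{\partial\Omega}u_0$. At any spatial maximum $x^{*}$ of $u(\cdot,t)$, the Hopf lemma applied to the harmonic extension $U$ gives $\mathscr{B}u(x^{*},t) \ge 0$, whence $\partial_t u^p(x^{*},t) \le \|a\|_\infty u(x^{*},t)$. Integrating the resulting differential inequality for $\max_{\partial\Omega} u(\cdot,t)$ yields $u \le M$ on $\overline\Omega\times[0,T_0]$ with $M,T_0$ depending only on the admissible data. Next, integrating the equation over $\partial\Omega$ and using $\int_{\partial\Omega}\mathscr{B}u\,dS = \int_\Omega \Delta U\,dx = 0$ gives
\[
\frac{d}{dt}\int_{\partial\Omega} u^p\,dS = -\int_{\partial\Omega} au\,dS,
\]
so that $\int_{\partial\Omega}u^p\,dS \ge \tfrac12\|u_0\|_{L^p(\partial\Omega)}^p$ on some interval $[0,t_1]$. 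H\"older's inequality (for $p\ge 1$) or Jensen's inequality (for $p<1$), combined with the $L^\infty$ bound, then yields $\int_{\partial\Omega} u\,dS \ge c_2 > 0$ on $[0,t_1]$, with $c_2$ depending only on the admissible data.

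The crux is a quantitative Hopf estimate at the minimum. Let $x_t$ attain $m(t)$, let $U$ be the harmonic extension of $u(\cdot,t)$, and set $W := U - m(t)$. Then $W\ge 0$ is harmonic in $\Omega$ and vanishes at $x_t \in \partial\Omega$. Fixing an interior reference point $\bar x\in\Omega$, the Poisson representation of $W$ yields
\[
W(\bar x) \ge c_0 \int_{\partial\Omega}(u - m(t))\,dS \ge c_0\bigl(c_2 - m(t)|\partial\Omega|\bigr) \ge \tfrac12 c_0 c_2
\]
whenever $m(t)$ lies below a threshold $c_4$, where $c_0$ is a lower bound on the Poisson kernel $P(\bar x,\cdot)$ on $\partial\Omega$. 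The standard barrier construction in the Hopf lemma then gives $\mathscr{B}u(x_t,t) = \partial_\nu W(x_t) \le -c_3$ for some $c_3>0$ depending only on the admissible data. Substituting into the equation,
\[
\partial_t u^p(x_t,t) = -\mathscr{B}u(x_t,t) - au(x_t,t) \ge c_3 - \|a\|_\infty m(t) \ge c_3/2,
\]
after shrinking $c_4$ if necessary. Since this holds at every minimizer $x_t$, the envelope property of the minimum gives $(m^p)'(t) \ge c_3/2$ whenever $m(t)\le c_4$, so integrating yields $m(t)^p \ge \min(c_4^p, c_3 t/2)$. Choosing $t_0 := \min(t_1,T_0,2c_4^p/c_3)$ and $\varepsilon_0 := (c_3/2)^{1/p}$ concludes $u(x,t)\ge m(t)\ge \varepsilon_0 t^{1/p}$ on $[0,t_0]$.

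The main obstacle is the quantitative Hopf step: the qualitative Hopf lemma must be replaced by an estimate with explicit constants depending only on $\Omega$ and on a lower bound for $W$ at an interior point, and one must verify that every constant entering the final bound is controlled by $n,p,a,\Omega,\max u_0,\|u_0\|_{L^p(\partial\Omega)}$ and is independent of $\min_{\partial\Omega} u_0$. The Sobolev subcriticality assumption \eqref{2} is not used directly; it enters only through Theorem \ref{thm:wellposedness} to guarantee existence of the smooth solution on which the argument is performed.
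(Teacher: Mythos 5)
Your proposal is correct and follows essentially the same route as the paper: a short-time $L^\infty$ bound, a short-time lower bound on $\int_{\partial\Omega}u^p\,\ud S$, and a quantitative Hopf lemma applied at the spatial minimum (this is precisely the paper's Lemma \ref{lem:normal derivative lower bound}, which is the nonlocal mechanism you identify). The only differences are cosmetic --- the paper integrates the equation against the first eigenfunction $\phi_1$ rather than against $1$, and runs a first-touching-time contradiction instead of a Hamilton-trick differential inequality for $m(t)$ --- yielding the same conclusion with the same dependence of constants.
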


Since the $\va_0$ in Theorem \ref{thm:infinitespeed} does not depend on $\min_{\partial\Omega} u_0$, then one can consider the equation \eqref{1} with nonnegative initial data by approximations, and its solutions will be immediately positive everywhere. Finite time blow up for the porous medium equations with reactions of type \eqref{eq:fde2} has  been very extensively studied in the literature, and we refer the readers to the survey Galaktionov-V\'azquez \cite{GVsurvey} for more references.

The estimates in Theorem \ref{thm:wellposedness} are sometimes called the global Harnack inequality in the literature  (see, for example, Bonforte-V\'azquez \cite{BV2015} for the fractional porous medium equations in Euclidean spaces,  Bonforte-Vazquez \cite{BVjfa} for a result of having a lower and upper bound in terms of Barenblatt profiles for \eqref{eq:fde} in the global case that $\Omega=\R^n$, and also Bonforte-Simonov \cite{BS20} for a complete characterization of the maximal set of initial data that produces solutions which are pointwisely trapped between two Barenblatt solutions). Parts (i) and (ii) of Theorem \ref{thm:wellposedness}  follow directly from the comparison principle and a short time existence theorem. Part (iii) is more delicate. From the comparison principle, we first show that $T^*<+\infty$, from which we know that $u$ either blows up or becomes extinct at the time $T^*$ at least at one point on $\pa\Omega$. Secondly, we show elliptic type weak Harnack inequalities and local maximum principles for solutions of \eqref{1} on each time slice and then use them to show that $\|u\|_{L^{p+1}(\pa\Omega)}$ either blows up or becomes extinct at the time $T^*$.  Harnack type inequalities for solutions of classical fast diffusion equations \eqref{eq:fde} were obtained in Bonforte-Simonov \cite{BS2020Adv} and Bonforte-Dolbeault-Nazaret-Simonov \cite{BDNS2021}. In this step, the elliptic type weak Harnack inequalities and local maximum principles  for \eqref{1} are obtained by considering an evolution equation of a curvature-like quantity, which was used before by the first two authors in \cite{2019Optimal} for \eqref{eq:fde}. Then by using a similar argument to that in Berryman-Holand \cite{Berryman}, we prove that $\|u\|_{L^{p+1}(\pa\Omega)}$ is bounded from below and above by two uniform multiples of $(T^*-t)^{\frac{1}{p-1}}$. Finally, the uniform upper bound of $u$ is obtained by Moser's iteration, and the uniform lower bound of $u$ is obtained by a quantitative Hopf's lemma (Lemma \ref{lem:normal derivative lower bound}). The proof of the uniform lower bound is the place where one can sense a nonlocal nature of the nonlinear boundary diffusion problem \eqref{1}. 

 Part (i) of Theorem \ref{mainTH} follows directly from the comparison principle again.  The convergence, and the rates in part (ii) and part (iv) of Theorem \ref{mainTH}, as well as the upper bound in \eqref{eq:convergencerate3}, are obtained by using {\L}ojasiewicz's inequality in infinite dimensional spaces developed by Simon \cite{Simon1983Asymptotics}. Whether or not the exponent $\theta$ in the {\L}ojasiewicz inequality \eqref{40} can reach $\frac{1}{2}$ would lead to either exponential convergence rates or algebraic convergence rates. The integrability condition on the solution of \eqref{eq:stationary} introduced by Adams-Simon \cite{AS} (see also Allard-Almgren \cite{AA} for an earlier integrability hypothesis) implies the {\L}ojasiewicz inequality with exponent $\frac12$, leading to the exponential convergence. If $\lambda_1(p-1)<0$, then the smooth positive solution $\varphi$ of \eqref{eq:steady}  is unique, and the linearized operator at $\varphi$ has a trivial kernel; see  Proposition \ref{prop:nondegenerate}. Thus, $\varphi$ is integrable.  If $\lambda_1=0$, then $\mathrm{Ker}\,\mathcal{L}_\varphi=\mbox{span}\{\varphi\}$, and thus, $\varphi$ is also integrable, by choosing $\varphi_s=(1+s)\varphi$. The dichotomy between \eqref{eq:convergencerate4} and the lower bound in \eqref{eq:convergencerate3} is proved by adapting  the arguments of  Choi-McCann-Seis \cite{CMS} for the classical fast diffusion equation \eqref{eq:fde}.

This paper is organized as follows. In Section \ref{sec:Preliminaries}, we collect some elementary inequalities and prove several elliptic results as a preparation. In Section \ref{sec:existence}, we show the short time existence of the solutions and its infinite speed of propagation. In Section \ref{sec:extinction}, we show the finite time extinction or finite time blow up phenomena, and prove the uniform lower and upper integral bounds for the rescaled solution $w$ defined in \eqref{eq:changeofvariable}. In Section \ref{sec:uniformbound}, we prove the uniform bounds in Theorem \ref{thm:wellposedness}. In Section \ref{sec:convergence}, we prove the convergence results in Theorem \ref{mainTH}, including the sharp convergence rate, and show higher order asymptotics. 

\bigskip

\noindent \textbf{Acknowledgement:} This is an improvement of part of the third author’s thesis at Beijing Normal University. We would like to thank the anonymous referee for his/her careful reading of the paper and for invaluable suggestions that greatly improved the presentation of the paper.

\section{Steady states and some quantitative elliptic estimates}\label{sec:Preliminaries}

Let $ \Omega \subset \R^n$, $n \geq 2 $,  be a bounded smooth domain. We first recall a Sobolev inequality and a trace inequality, which will make it easier for us to refer later. For $0<q_1<+\infty$ if $n=2$, and $0 < q_1 \le   \frac{n+2}{n-2} $ if $n\ge 3$, there exists $C_1>0$ depending only on $n,q_1$ and $\Omega$ such that
\begin{equation}\label{eq:sobolev00}
\left(\int_\Omega |u|^{q_1+1}\,\ud x\right)^\frac{2}{q_1+1}\le  C_1\left(\int_{\om} |\nabla u|^2\,\ud x + \int_{\pa \om} u^2\,\ud S\right)\quad\mbox{for all }u\in H^1(\Omega).
\end{equation}
For $0<q_2<+\infty$ if $n=2$, and $0 < q_2 \le   \frac{n}{n-2} $ if $n\ge 3$, there exists $C_2>0$ depending only on $n,q_2$ and $\Omega$ such that
\begin{equation}\label{thm:trace}
\left(\int_{\partial\Omega} |u|^{q_2+1}\,\ud S\right)^\frac{2}{q_2+1}\le  C\left(\int_{\om} |\nabla u|^2\,\ud x + \int_{\pa \om} u^2\,\ud S\right)
\end{equation}
Consequently, by a compactness argument, we  have the following coercivity.  
\begin{prop}\label{prop:sobolev}
Let $a\in C^\infty(\pa \om)$. Assume $\lambda_1>0$, where $\lambda_1$ is defined in \eqref{eq:lambda1}. Then there exists $C>0$ depending only on $n,\Omega,\lambda_1$ and $a$ such that
\[
\int_{\om}( |\nabla u|^2+u^2)\,\ud x \le C\left(\int_{\om} |\nabla u|^2\,\ud x + \int_{\pa \om} au^2\,\ud S\right) \quad\mbox{for all }u\in H^1(\Omega).
\]
\end{prop}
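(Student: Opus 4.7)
The plan is to use the compactness argument hinted at in the excerpt, combining the variational characterization of $\lambda_1$ with a Poincar\'e-trace inequality on $H^1(\Omega)$. Since $\lambda_1>0$, testing against any $u\in H^1(\Omega)$ in the Rayleigh quotient definition \eqref{eq:lambda1} immediately gives
\[
\int_\Omega |\nabla u|^2\,\ud x+\int_{\pa\Omega} a u^2\,\ud S\ \ge\ \lambda_1 \int_{\pa\Omega} u^2\,\ud S,
\]
which controls the boundary $L^2$-norm by the Rayleigh-type functional $Q(u):=\int_\Omega|\nabla u|^2+\int_{\pa\Omega}au^2$.

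Next I would establish the Poincar\'e-trace inequality
\[
\|u\|_{L^2(\Omega)}^2\ \le\ C_0\left(\|\nabla u\|_{L^2(\Omega)}^2+\|u\|_{L^2(\pa\Omega)}^2\right)\qquad\forall\, u\in H^1(\Omega),
\]
where $C_0$ depends only on $n$ and $\Omega$. This is standard and proved by contradiction: if it fails, there exists $\{u_k\}\subset H^1(\Omega)$ with $\|u_k\|_{L^2(\Omega)}=1$ and $\|\nabla u_k\|_{L^2(\Omega)}^2+\|u_k\|_{L^2(\pa\Omega)}^2\to 0$. The sequence is bounded in $H^1$, so after passing to a subsequence it converges weakly in $H^1(\Omega)$ and, by Rellich and the compactness of the trace map $H^1(\Omega)\hookrightarrow L^2(\pa\Omega)$, strongly in $L^2(\Omega)$ and $L^2(\pa\Omega)$. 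The limit $u$ satisfies $\nabla u\equiv 0$ in $\Omega$ (so $u$ is constant) and $u\equiv 0$ on $\pa\Omega$, hence $u\equiv 0$, contradicting $\|u\|_{L^2(\Omega)}=1$.

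Finally I would combine the two estimates. Writing $Q(u)$ for $\int_\Omega|\nabla u|^2+\int_{\pa\Omega}au^2$, the first step gives $\|u\|_{L^2(\pa\Omega)}^2\le Q(u)/\lambda_1$, and then
\[
\int_\Omega|\nabla u|^2\,\ud x\ =\ Q(u)-\int_{\pa\Omega} a u^2\,\ud S\ \le\ Q(u)+\|a\|_{L^\infty(\pa\Omega)}\|u\|_{L^2(\pa\Omega)}^2\ \le\ \Bigl(1+\tfrac{\|a\|_\infty}{\lambda_1}\Bigr)Q(u).
\]
Plugging these two bounds into the Poincar\'e-trace inequality yields
\[
\int_\Omega(|\nabla u|^2+u^2)\,\ud x\ \le\ (1+C_0)\int_\Omega|\nabla u|^2\,\ud x+C_0\int_{\pa\Omega}u^2\,\ud S\ \le\ C\, Q(u),
\]
with $C$ depending only on $n,\Omega,\lambda_1,\|a\|_\infty$, which gives the claim. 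No real obstacle appears; the only nontrivial step is the Poincar\'e-trace inequality, whose proof relies on the compact trace embedding $H^1(\Omega)\hookrightarrow L^2(\pa\Omega)$ for smooth bounded $\Omega$. Note that the entire argument could equivalently be carried out in one compactness step applied directly to the claimed inequality, but splitting off the Poincar\'e-trace inequality separates the two uses of $\lambda_1>0$ (controlling the boundary term) and of the smoothness of $\pa\Omega$ (controlling the interior $L^2$-mass).
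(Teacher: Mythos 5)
Your proof is correct and is precisely the compactness argument the paper invokes without spelling out: the paper simply states that the coercivity ``follows by a compactness argument'' from the preceding Sobolev and trace inequalities, and the Poincar\'e--trace inequality you prove by contradiction (using Rellich and the compact trace embedding) supplies exactly the missing interior $L^2$ control. Your decomposition into the two steps—boundary $L^2$ control from $\lambda_1>0$, interior $L^2$ control from the Poincar\'e--trace inequality—is a clean and standard way to carry this out, and the constant tracking matches the claimed dependence on $n,\Omega,\lambda_1,a$.
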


We will use the following known local maximum principle and weak Harnack inequality. For reader's convenience, we sketch their proofs.

\begin{lem}[Local maximum principle]\label{lem:localmax}
Suppose $w$ is a positive smooth (in $\overline\Omega$) solution of
\begin{equation*}
  \left\{
    \begin{array}{ll}
        -\Delta w  \le 0          \quad  \mbox{in }   \Omega \\
           \frac{\partial}{\partial \nu} w  \leq  gw      \quad \mbox{on }   \partial \Omega
         \end{array}
  \right.
\end{equation*}
with $g^{+}\in L^\infty(\partial \Omega)$, where $g^+(x)=\max(g(x),0)$. Then there exist $C>0$, which depends only on $n, \Omega$ and $\|g^{+}\|_{L^\infty(\partial \Omega)}$, such that
$$
\sup_{\partial\Omega}w \leq C\| w  \|_{L^{1}(\partial\Omega)}.
$$
\end{lem}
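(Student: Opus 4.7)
The plan is to run a Moser iteration directly on the boundary $\partial\Omega$, converting the subharmonicity of $w$ together with the Robin-type boundary inequality into a reverse H\"older chain that can be iterated. Since $w$ is smooth and positive, $w^{\beta}$ is a legitimate test function for every $\beta\in\mathbb{R}$. For $\beta \ge 1$, multiplying $-\Delta w \le 0$ by $w^{\beta}$, integrating by parts, and using $\partial_{\nu} w \le g w$ gives
\begin{equation*}
\beta \int_{\Omega} w^{\beta-1}|\nabla w|^2\,\ud x \le \int_{\partial\Omega} w^{\beta}\partial_{\nu} w\,\ud S \le \|g^{+}\|_{L^{\infty}(\partial\Omega)} \int_{\partial\Omega} w^{\beta+1}\,\ud S.
\end{equation*}
Setting $v = w^{(\beta+1)/2}$ and using $|\nabla w^{(\beta+1)/2}|^2 = \frac{(\beta+1)^2}{4} w^{\beta-1}|\nabla w|^2$, the left-hand side controls $\int_\Omega |\nabla v|^2\,\ud x$ up to a factor $C(\beta+1)$, so that
\begin{equation*}
\int_{\Omega}|\nabla v|^2\,\ud x + \int_{\partial\Omega} v^2\,\ud S \le C(\beta+1)\bigl(1 + \|g^{+}\|_{L^{\infty}(\partial\Omega)}\bigr)\int_{\partial\Omega} v^2\,\ud S.
\end{equation*}

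Next I will combine this energy estimate with the trace-Sobolev inequality \eqref{thm:trace} applied with exponent $q_{2}+1 = 2\chi$, where $\chi = (n-1)/(n-2)$ when $n\ge 3$ and $\chi$ is any number larger than $1$ when $n=2$. This gives
\begin{equation*}
\Bigl(\int_{\partial\Omega} v^{2\chi}\,\ud S\Bigr)^{1/\chi} \le C(\beta+1)\int_{\partial\Omega} v^{2}\,\ud S,
\end{equation*}
i.e.\
\begin{equation*}
\|w\|_{L^{\chi(\beta+1)}(\partial\Omega)} \le \bigl[C(\beta+1)\bigr]^{1/(\beta+1)}\,\|w\|_{L^{\beta+1}(\partial\Omega)},
\end{equation*}
where $C$ depends only on $n,\Omega,\chi$ and $\|g^{+}\|_{L^{\infty}(\partial\Omega)}$. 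Iterating this with the sequence $\beta_{k}+1 = q\,\chi^{k}$, starting from some fixed $q\ge 2$, the resulting product $\prod_{k\ge 0}\bigl[C(q\chi^{k})\bigr]^{1/(q\chi^{k})}$ converges because $\chi > 1$, and therefore
\begin{equation*}
\sup_{\partial\Omega} w \;=\; \lim_{k\to\infty}\|w\|_{L^{q\chi^{k}}(\partial\Omega)} \;\le\; C\,\|w\|_{L^{q}(\partial\Omega)}.
\end{equation*}

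To pass from $L^{q}$ to $L^{1}$, I will use the elementary interpolation $\|w\|_{L^{q}(\partial\Omega)} \le \|w\|_{L^{\infty}(\partial\Omega)}^{1-1/q}\|w\|_{L^{1}(\partial\Omega)}^{1/q}$ (valid for $q>1$), insert it into the bound above, and absorb the $\|w\|_{L^\infty(\partial\Omega)}^{1-1/q}$ factor into the left-hand side using Young's inequality; this is legitimate because $w$ is smooth on $\overline\Omega$, hence $\|w\|_{L^\infty(\partial\Omega)}$ is a priori finite. The main (and only) technical point is the iteration bookkeeping, namely showing that the constants $[C(q\chi^{k})]^{1/(q\chi^{k})}$ have a convergent product; this is routine because $\sum_{k}\log(q\chi^{k})/\chi^{k}<\infty$. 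Notably, no ideas beyond the classical Moser iteration are needed — the only adaptation to the nonlocal boundary-diffusion setting is that the quantity one iterates is measured on $\partial\Omega$ while the energy lives on $\Omega$, which is precisely the role played by the trace inequality \eqref{thm:trace}.
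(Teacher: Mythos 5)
Your proof is correct and takes essentially the same approach as the paper's: a Moser iteration on $\partial\Omega$ driven by the trace inequality, followed by interpolation (in the paper, $\|w\|_{L^2(\partial\Omega)}\le \|w\|_{L^\infty(\partial\Omega)}^{1/2}\|w\|_{L^1(\partial\Omega)}^{1/2}$ and absorption) to descend from $L^q$ to $L^1$. The only cosmetic difference is that you keep a general starting exponent $q\ge 2$ and invoke Young's inequality where the paper simply divides, but the estimate $\beta\int_\Omega w^{\beta-1}|\nabla w|^2\le \|g^+\|_\infty\int_{\partial\Omega}w^{\beta+1}$, the substitution $v=w^{(\beta+1)/2}$, and the iterated reverse H\"older chain are identical.
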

\begin{proof}
For $k\ge 1$, multiplying $w^{k}$ on both sides and integrating by parts, we obtain
\[
k\int_{\Omega} w^{k-1} |\nabla w|^2\,\ud x\le  \int_{\partial\Omega}w^{k}\frac{\partial}{\partial \nu} w \,\ud S\le  \int_{\partial\Omega} gw^{k+1}\,\ud S.
\]
If we let $v=w^{\frac{k+1}{2}}$, then this implies that
\[
\int_{\Omega} |\nabla v|^2\,\ud x\le \frac{C(k+1)^2}{4k} \int_{\partial\Omega}v^2 \,\ud S.
\]
By the trace inequality \eqref{thm:trace},
\[
\left(\int_{\partial\Omega} |v|^q\,\ud x\right)^\frac{2}{q}\le \frac{C(k+1)^2}{4k} \int_{\partial\Omega}v^2 \,\ud S,
\]
where $q=\frac{2(n-1)}{n-2}$ if $n\ge 3$, and $q=4$ if $n=2$. 
Then it follows from the standard Moser's iteration (see, e.g., Chapter 8 of Gilbarg-Trudinger \cite{1983Trudinger}) that 
$$
\sup_{\partial\Omega}w \leq C\| w  \|_{L^{2}(\partial\Omega)}.
$$
Then the conclusion follows from the fact that $\| w  \|_{L^{2}(\partial\Omega)}\le (\sup_{\partial\Omega}w)^{\frac12} (\| w  \|_{L^{1}(\partial\Omega)})^{\frac12}$.
\end{proof}

We need an  auxiliary lemma for the weak Harnack inequality in a form that we need.

\begin{lem}\label{lem:inversecontrol}
Let $u\in C^2(\overline\Omega)$ be a positive function such that $-\Delta u\ge 0$ in $\Omega$. Then for every $0<\delta\le 1$, there exists $C>0$ depending only on $n,\Omega$ and $\delta$ such that
\[
\|u\|_{L^\delta(\Omega)}\ge C \|u\|_{L^1(\partial\Omega)}.
\]
\end{lem}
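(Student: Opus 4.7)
The plan is to compare $u$ with its harmonic extension and then to exploit the positivity of the Poisson kernel. First I would let $h$ solve $\Delta h=0$ in $\Omega$ with $h=u$ on $\partial\Omega$, which is well-posed since $u\in C^2(\overline\Omega)$. Because $-\Delta(u-h)=-\Delta u\ge 0$ in $\Omega$ and $u-h=0$ on $\partial\Omega$, the classical maximum principle gives $u\ge h\ge 0$ in $\Omega$.

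Next I would invoke the Poisson integral representation
\[
h(x)=\int_{\partial\Omega} P(x,y)\,u(y)\,\ud S(y),
\]
where $P$ denotes the Poisson kernel of the smooth bounded domain $\Omega$, which is continuous and strictly positive on $\Omega\times\partial\Omega$. Fixing once and for all a closed ball $K$ compactly contained in $\Omega$, compactness of $K\times\partial\Omega$ produces a constant $c_0=c_0(n,\Omega)>0$ with $P(x,y)\ge c_0$ for every $(x,y)\in K\times\partial\Omega$. Consequently $h(x)\ge c_0\|u\|_{L^1(\partial\Omega)}$ for all $x\in K$.

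Finally, using $u\ge h\ge 0$ pointwise and the monotonicity of $t\mapsto t^\delta$ on $[0,\infty)$, for each $0<\delta\le 1$ one obtains
\[
\int_\Omega u^\delta\,\ud x\ge \int_K h^\delta\,\ud x\ge |K|\,c_0^\delta\,\|u\|_{L^1(\partial\Omega)}^\delta,
\]
and taking the $\delta$-th root yields the claimed bound with $C=|K|^{1/\delta}c_0$, which depends only on $n,\Omega,\delta$. No serious obstacle is anticipated: the only technical input is the uniform positive lower bound on the Poisson kernel on $K\times\partial\Omega$, a classical fact for smooth bounded domains (equivalently obtainable from Harnack's inequality applied to positive harmonic functions in $\Omega$ combined with the mean value characterization of $\|u\|_{L^1(\partial\Omega)}$ via the Poisson representation).
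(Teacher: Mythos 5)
Your proof is correct, and it is actually cleaner than the paper's. Both arguments start the same way: replace $u$ by its harmonic extension $h$ (the paper calls it $U$) and use the maximum principle to get $u\ge h\ge 0$, then observe that it suffices to lower-bound $\|h\|_{L^\delta(\Omega)}$ by $\|u\|_{L^1(\partial\Omega)}$. From that point on, the two proofs diverge. The paper writes $\int_\Omega U^\delta=\int_{\partial\Omega}\varphi(\xi)\bigl(\int_\Omega U^{\delta-1}P(\cdot,\xi)\bigr)\,\ud S(\xi)$, then applies the reverse H\"older inequality (with exponent $\delta<1$) to peel off a factor $\bigl(\int_\Omega P(\cdot,\xi)^\delta\bigr)^{1/\delta}$, and finally uses the two-sided Poisson-kernel estimate \eqref{eq:estpoisson} together with the uniform interior ball condition to show this last integral is bounded below uniformly in $\xi$. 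You instead fix a compact ball $K\Subset\Omega$, note that $P$ is bounded below by a positive constant $c_0(n,\Omega)$ on $K\times\partial\Omega$ (which also follows from the left inequality in \eqref{eq:estpoisson}), so that $h\ge c_0\|u\|_{L^1(\partial\Omega)}$ pointwise on $K$, and then simply integrate $h^\delta$ over $K$. This avoids the reverse H\"older step entirely and needs only the interior (not boundary-fine) behaviour of the Poisson kernel; the price is that it completely discards the contribution of $h$ near $\partial\Omega$, whereas the paper's argument actually captures the correct near-boundary decay of $P(\cdot,\xi)^\delta$. For the qualitative statement of the lemma both are equally good, and your route is the more elementary and shorter of the two.
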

\begin{proof}
Let $\varphi(\xi)=u(\xi)$ for every $\xi\in\partial\Omega$. Let $P(x,\xi):\Omega\times\partial\Omega\to\R^+$ be the Poisson kernel. Note that we have the following estimates for the Poisson kernel (see, e.g., Theorem 1 in Krantz \cite{Krantz}):
\begin{equation}\label{eq:estpoisson}
c_1\frac{dist(x,\partial\Omega)}{|x-\xi|^n}\le P(x,\xi)\le c_2\frac{dist(x,\partial\Omega)}{|x-\xi|^n},
\end{equation}
where $c_1$ and $c_2$ are two positive constants depending only on $n$ and $\Omega$. Let
\[
 U(x):=\int_{\partial\Omega}P(x,\xi)\varphi(\xi)\,\ud S(\xi).
\]
Then
\begin{align*}
\int_{\Omega}U(x)^\delta\,\ud x&=\int_{\Omega}U(x)^{\delta-1}\left(\int_{\partial\Omega}P(x,\xi)\varphi(\xi)\,\ud S(\xi)\right)\,\ud x\\
&= \int_{\partial\Omega}\varphi(\xi) \left(\int_{\Omega}U(x)^{\delta-1}P(x,\xi)\,\ud x\right)\,\ud S(\xi).
\end{align*}
By the reverse H\"older inequality, we have
\begin{align*}
\int_{\Omega}U(x)^{\delta-1}P(x,\xi)\,\ud x&\ge \left(\int_{\Omega}U(x)^\delta\,\ud x \right)^{\frac{\delta-1}{\delta}} \left(\int_{\Omega}P(x,\xi)^\delta\,\ud x\right)^{\frac{1}{\delta}}.
\end{align*}
Since $\overline\Omega$ is smooth and compact, it satisfies a uniform interior ball condition, that is, there exists $r>0$ such that for every $\xi\in\partial\Omega$, there exists $x_\xi\in\Omega$ such that the ball $B_r(x_\xi)\subset\Omega$, and $\overline B_r(x_\xi) \cap\partial\Omega=\{\xi\}$. Hence,
\[
\int_{\Omega}P(x,\xi)^\delta\,\ud x \ge c_1 \int_{B_{r/2}(x_\xi)}\left(\frac{dist(x, \partial \Omega)}{|x-\xi|^n}\right)^\delta\,\ud x\ge c_1 |B_1|\left(\frac{1}{2^{n+1}r^{n-1}}\right)^\delta \left(\frac{r}{2}\right)^{n}>0.
\]
Therefore,
\[
\left(\int_{\Omega}U(x)^\delta\,\ud x\right)^{\frac{1}{\delta}} \ge C \int_{\partial\Omega}\varphi(\xi) \,\ud S(\xi).
\]
Since $u\ge U$ in $\Omega$ by the maximum principle, the conclusion follows.
\end{proof}

\begin{lem}[Weak Harnack inequality]\label{lem:weakharnack}
Suppose $w$ is a positive smooth (in $\overline\Omega$) solution of
\begin{equation*}
  \left\{
    \begin{array}{ll}
        -\Delta w  \ge 0          \quad  \mbox{in }   \Omega \\
           \frac{\partial}{\partial \nu} w  \geq  gw      \quad \mbox{on }   \partial \Omega
         \end{array}
  \right.
\end{equation*}
with $g^{-}\in L^\infty(\partial \Omega)$, where $g^-(x)=-\min(g(x),0)$. Then there exists $C>0$, which depends only on $n, \Omega$ and $\|g^{-}\|_{L^\infty(\partial \Omega)}$, such that
$$
\inf_{\partial\Omega}w \geq \frac{1}{C}\| w  \|_{L^{1}(\partial\Omega)}.
$$
\end{lem}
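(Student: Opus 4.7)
I would prove the weak Harnack inequality by combining a quantitative Hopf-type estimate at the minimum point on $\partial\Omega$ with the Poisson-kernel lower bound used in the proof of Lemma \ref{lem:inversecontrol}. Since $-\Delta w \ge 0$, the minimum principle for superharmonic functions yields $\inf_{\overline\Omega} w = \inf_{\partial\Omega} w = w(x^\ast)$ for some $x^\ast \in \partial\Omega$. Moreover, comparing $w$ with the harmonic extension $H$ of $w|_{\partial\Omega}$ (so that $w-H$ is superharmonic and vanishes on $\partial\Omega$) gives $w \ge H$ in $\Omega$. Using the smoothness of $\partial\Omega$, fix a uniform interior sphere radius $r_0 > 0$. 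Then the two-sided Poisson-kernel estimates \eqref{eq:estpoisson} show that for every $y \in \Omega$ with $\mathrm{dist}(y,\partial\Omega) \ge r_0/2$,
\[
w(y) \;\ge\; H(y) \;=\; \int_{\partial\Omega} P(y,\xi)\,w(\xi)\,\ud S(\xi) \;\ge\; c_0 \int_{\partial\Omega} w\,\ud S,
\]
with $c_0 = c_0(n,\Omega) > 0$.

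The main step is a quantitative Hopf lemma at $x^\ast$. Let $B_{r_0}(x_0)$ be an interior tangent ball to $\partial\Omega$ at $x^\ast$, so $\overline{B_{r_0}(x_0)} \cap \partial\Omega = \{x^\ast\}$ and $x^\ast - x_0 = r_0\,\nu(x^\ast)$. On the annulus $A = B_{r_0}(x_0)\setminus\overline{B_{r_0/2}(x_0)}$ I would introduce the classical barrier $\phi(x) = e^{-\alpha|x-x_0|^2} - e^{-\alpha r_0^2}$; choosing $\alpha = 2n/r_0^2$ ensures $\Delta\phi \ge 0$ on $A$, while $\phi = 0$ on $\partial B_{r_0}(x_0)$ and $\phi = e^{-\alpha r_0^2/4} - e^{-\alpha r_0^2} > 0$ on $\partial B_{r_0/2}(x_0)$. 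Set $m := \min_{\partial B_{r_0/2}(x_0)} w$ and $\varepsilon := (m - w(x^\ast))/(e^{-\alpha r_0^2/4} - e^{-\alpha r_0^2}) \ge 0$. Then $\psi := w - w(x^\ast) - \varepsilon\phi$ is superharmonic on $A$ and nonnegative on $\partial A$, hence $\psi \ge 0$ on $A$ by the minimum principle. Since $\psi(x^\ast) = 0$ and $\psi \ge 0$ immediately inside $A$, the outward normal derivative at $x^\ast$ satisfies $\partial_\nu\psi(x^\ast) \le 0$, and a direct computation gives $\partial_\nu\phi(x^\ast) = -2\alpha r_0 e^{-\alpha r_0^2}$, so
\[
\partial_\nu w(x^\ast) \;\le\; -c_1\,(m - w(x^\ast)), \qquad c_1 = c_1(n,r_0) > 0.
\]

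Combining this with the boundary inequality $\partial_\nu w(x^\ast) \ge g(x^\ast)w(x^\ast) \ge -\|g^-\|_{L^\infty(\partial\Omega)}w(x^\ast)$ and rearranging yields
\[
w(x^\ast) \;\ge\; \frac{c_1}{c_1 + \|g^-\|_{L^\infty(\partial\Omega)}}\, m,
\]
and then $m \ge c_0 \|w\|_{L^1(\partial\Omega)}$ from the first step completes the argument. The hard part is tracking uniform dependence of all constants: the existence of a single $r_0 > 0$ serving as an interior sphere radius at every $x^\ast \in \partial\Omega$ relies on the $C^2$ regularity and compactness of $\partial\Omega$, and once $r_0$ is fixed the Poisson-kernel bounds and barrier constants depend only on $n$ and $\Omega$, so the final constant depends only on $n$, $\Omega$, and $\|g^-\|_{L^\infty(\partial\Omega)}$, matching the stated dependence.
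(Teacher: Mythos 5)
Your proof is correct, but it follows a genuinely different route from the paper's. The paper proves this lemma in two steps: it first invokes (the proof of) Lemma~A.1 of Han--Li, a Moser-iteration-type argument, to get a weak Harnack estimate of the form $\inf_{\partial\Omega}w \geq \frac{1}{C}\|w\|_{L^{\delta}(\Omega)}$ for some $\delta\in(0,1)$, and then passes from the interior $L^\delta$ norm to the boundary $L^1$ norm via Lemma~\ref{lem:inversecontrol}. Your argument replaces the Moser iteration entirely by an elementary barrier construction at the boundary minimum: after reducing to the minimum point $x^\ast$ by the superharmonic minimum principle, you run the classical Hopf lemma with the exponential barrier $\phi(x)=e^{-\alpha|x-x_0|^2}-e^{-\alpha r_0^2}$ on the tangent annulus to get $\partial_\nu w(x^\ast)\le -c_1\bigl(m-w(x^\ast)\bigr)$ with $m$ the minimum on the inner hemisphere, then close the loop using the oblique boundary inequality $\partial_\nu w\ge gw$ and the Poisson-kernel lower bound $m\ge c_0\|w\|_{L^1(\partial\Omega)}$ (the same two-sided estimate \eqref{eq:estpoisson} that the paper uses in Lemma~\ref{lem:inversecontrol}). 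All steps check out, including the uniform interior tangent ball radius and the constant tracking, so the final constant depends only on $n$, $\Omega$, and $\|g^-\|_{L^\infty(\partial\Omega)}$, as required. The trade-off is that the paper's route is shorter on the page but leans on an external iteration lemma, while yours is longer but entirely self-contained and more transparent about where the $\|g^-\|_{L^\infty}$ dependence enters; it is also spiritually closer to the paper's own Lemma~\ref{lem:normal derivative lower bound}, which the authors prove later by a similar Hopf-type barrier idea.
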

\begin{proof}
By the proof of Lemma A.1 of Han-Li \cite{HanLi}, there exist $C>0$ and $\delta\in (0,1)$, which depend only on $n, \Omega$ and $\|g^{-}\|_{L^\infty(\partial \Omega)}$, such that
\[
\inf_{\partial\Omega}w \geq \frac{1}{C}\| w  \|_{L^{\delta}(\Omega)}.
\]
Then the conclusion follows from Lemma \ref{lem:inversecontrol}. 
\end{proof}

We will also use the following quantitative Hopf's lemma for nonnegative harmonic functions  when proving the uniform lower bound in part (iii) of Theorem \ref{thm:wellposedness}. It should be known in the literature, but we cannot find a reference. Hence, we provide a proof.

\begin{lem}[A quantitative Hopf's lemma]\label{lem:normal derivative lower bound}
Let $u\in C^2(\overline\Omega)$ be a nonnegative function such that $\Delta u= 0$ in $\Omega$. If $u(x_0)= 0 $ for some $x_0 \in \pa \Omega$, then there exists $C>0$ depending only on $n$ and $\Omega$ such that
\[
-\pa_{\nu} u (x_0) \ge C \int_{\pa \Omega}u \,\ud S.
\]
\end{lem}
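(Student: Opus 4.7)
The plan is to combine the uniform interior ball condition, the Poisson representation, the classical Harnack inequality, and a standard Hopf barrier. Since $\overline{\Omega}$ is smooth and compact, there exists $r>0$ depending only on $\Omega$ such that for our given boundary point $x_0$ one can find $y_0\in\Omega$ with $B_r(y_0)\subset\Omega$ and $\overline{B_r(y_0)}\cap\partial\Omega=\{x_0\}$; in particular $y_0-x_0=-r\nu(x_0)$. We may assume $u\not\equiv 0$, otherwise the inequality is trivial, in which case the strong maximum principle gives $u>0$ in $\Omega$.

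First I would bound $u(y_0)$ from below by the boundary integral. Writing $u(y_0)=\int_{\partial\Omega}P(y_0,\xi)u(\xi)\,\ud S(\xi)$ and using the lower bound \eqref{eq:estpoisson} together with $\mathrm{dist}(y_0,\partial\Omega)=r$ and $|y_0-\xi|\le \mathrm{diam}(\Omega)$, we have $P(y_0,\xi)\ge c_1 r/(\mathrm{diam}(\Omega))^n$, so
\[
u(y_0)\ge c\int_{\partial\Omega}u\,\ud S
\]
for some $c>0$ depending only on $n$ and $\Omega$. Next, by the classical Harnack inequality applied to the positive harmonic function $u$ on $B_r(y_0)$, there is $c_3>0$ depending only on $n$ such that $\inf_{\partial B_{r/2}(y_0)}u\ge c_3 u(y_0)$.

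Now I introduce the standard Hopf barrier $h(x)=e^{-\alpha|x-y_0|^2}-e^{-\alpha r^2}$ on the annulus $A=B_r(y_0)\setminus\overline{B_{r/2}(y_0)}$. A direct computation gives $\Delta h=(4\alpha^2|x-y_0|^2-2n\alpha)e^{-\alpha|x-y_0|^2}$, which is strictly positive on $A$ provided $\alpha>8n/r^2$; fix such an $\alpha$ depending only on $n,\Omega$. Set $c_4:=e^{-\alpha r^2/4}-e^{-\alpha r^2}=\max_{\partial B_{r/2}(y_0)}h>0$ and $\varepsilon:=c_3 u(y_0)/c_4$. On $\partial B_r(y_0)$ we have $h\equiv 0\le u$ since $u\ge 0$ on $\partial\Omega$ and $u>0$ in $\Omega$, while on $\partial B_{r/2}(y_0)$ we have $\varepsilon h\le \varepsilon c_4=c_3 u(y_0)\le u$. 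Since $u-\varepsilon h$ is superharmonic in $A$ and nonnegative on $\partial A$, the minimum principle gives $u\ge\varepsilon h$ throughout $A$.

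Finally, $u(x_0)=h(x_0)=0$, so taking the derivative at $x_0$ in the inward direction $-\nu(x_0)$ gives $-\partial_\nu u(x_0)\ge \varepsilon(-\partial_\nu h(x_0))$. Since $\nabla h(x_0)=-2\alpha(x_0-y_0)e^{-\alpha r^2}$ and $x_0-y_0=r\nu(x_0)$, we get $-\partial_\nu h(x_0)=2\alpha r e^{-\alpha r^2}$, and therefore
\[
-\partial_\nu u(x_0)\ge 2\alpha r e^{-\alpha r^2}\,\frac{c_3}{c_4}\,u(y_0)\ge C\int_{\partial\Omega}u\,\ud S,
\]
with $C>0$ depending only on $n$ and $\Omega$. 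There is no genuine obstacle here; the only point requiring care is checking that the interior ball radius $r$, the Harnack constant, the Poisson lower bound, and the barrier parameter $\alpha$ can all be chosen uniformly in $x_0\in\partial\Omega$, which follows from the smoothness and compactness of $\partial\Omega$.
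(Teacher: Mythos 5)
Your proof is correct, but it takes a genuinely different route from the paper. Both arguments start from the Poisson kernel lower estimate \eqref{eq:estpoisson} to show that $u$ is controlled from below by $\int_{\partial\Omega}u\,\ud S$ at some interior point, but they diverge after that. The paper fixes an interior reference point $0\in\Omega$, gets $u\ge\tilde C\int_{\partial\Omega}u\,\ud S$ on a fixed ball $\overline B_r$ around that point, then constructs an implicit barrier $\psi$ by solving a Dirichlet problem on the half-region $U=\Omega\cap\{x_n<0\}$ with cutoff data on the flat disk; the desired constant is $-\partial_\nu\psi(x_0)>0$, quoted from the qualitative Hopf lemma applied to $\psi$. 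You instead place the interior reference point $y_0$ at the center of the tangent interior ball at $x_0$, feed $u(y_0)\ge c\int_{\partial\Omega}u\,\ud S$ through the classical interior Harnack inequality on $B_r(y_0)$ to get a uniform lower bound on $\partial B_{r/2}(y_0)$, and then run the standard explicit exponential barrier $e^{-\alpha|x-y_0|^2}-e^{-\alpha r^2}$ on the annulus. Your version is more elementary and yields a constant that is computable directly from $n$ and $\Omega$'s inner radius and diameter, and it also avoids any discussion of whether the paper's barrier-derivative constant is uniform in $x_0$ as the half-region $U$ rotates with the boundary point (which the paper leaves implicit and would require a compactness argument to make airtight). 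The only point worth double-checking in your write-up is that you need $u\not\equiv 0$ before the strong maximum principle gives $u>0$ in $\Omega$, which you do note; everything else, including the uniformity of $r$, the Poisson bound, and the Harnack constant, holds for exactly the reasons you give.
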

\begin{proof}
Since $u$ is nonnegative, if $\int_{\pa \Omega}u \,\ud S=0$, then by the strong maximum principle, $u\equiv0$ in $\overline\Omega$, by which the lemma clearly holds. 

If $\int_{\pa \Omega}u \,\ud S>0$, by scaling, we can assume that $\int_{\pa \Omega}u \,\ud S=1$. Similar to the proof of Lemma \ref{lem:inversecontrol}, we denote $\varphi(\xi)=u(\xi)$ for every $\xi\in\partial\Omega$. Let $P(x,\xi):\Omega\times\partial\Omega\to\R^+$ be the Poisson kernel. Then we have representation
\[
 u(x):=\int_{\partial\Omega}P(x,\xi)\varphi(\xi)\,\ud S(\xi) \quad x\in \Omega.
\]
Without loss of generality, we assume that $0\in\Omega$ and $x_0=(0,\cdots,0,-1)$. Denote $ r=\frac{1}{2}dist(0, \pa \Omega)$. Then we know from \eqref{eq:estpoisson} that 
\[
u(x) \ge \tilde{C}(\Omega,n)>0 \quad  \forall\,x \in \overline{B}_r.
\]
Let
$$
U:=\Omega \cap \{ x  \ | \    x_{n} < 0   \}, \quad B_r':=\left\{x: |x|<r, x_n = 0 \right\},
$$
$\eta\in C^\infty_c(B_r')$ be nonnegative everywhere and equals to $1$ in $B_{r/2}'$, and
$\psi$ be the solution of
\begin{align*}
\Delta \psi &= 0 \mbox{ in }  U,\\
\psi &= \tilde{C} \eta \mbox{ on } \ \left\{x: |x|<r, x_n = 0 \right\}, \\
   \psi &= 0 \mbox{ on } \ \pa U \setminus \left\{x: |x|<r, x_n = 0 \right\}. 
\end{align*}
Then by the maximum principle, it follows that
$$
u \ge \psi>0  \mbox{ in }  U.
$$
Since $u(x_0)=\psi(x_0)=0$, we have
\[
\pa_\nu (u-\psi)(x_0) \le 0.
\]
Finally it gives 
\[
-\pa_\nu u(x_0) \ge -\pa_\nu \psi (x_0) >0  
\]
where we used Hopf's Lemma for harmonic functions in the last inequality. The lemma is proved by choosing $C=-\pa_\nu \psi (x_0)$ and by recalling the normalization $\int_{\pa \Omega}u \,\ud S=1$ at the beginning of the proof.
\end{proof}

Next, we prove the existence of solutions of \eqref{eq:steady} or \eqref{eq:stationary}.
\begin{prop}\label{prop:existencesteady}
Let $a\in C^\infty(\pa \om)$ and $p$ satisfy \eqref{2}. Then there exists a positive smooth solution of \eqref{eq:steady}.
\end{prop}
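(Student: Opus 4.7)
The plan is to split on the sign of $\lda_1$, relying throughout on the compact trace embedding $H^{1}(\Omega) \hookrightarrow L^{p+1}(\partial\Omega)$, which is provided by the subcriticality assumption \eqref{2}. In the degenerate case $\lda_1=0$, equation \eqref{eq:stationary} reduces to $(\mathscr{B}+a)\varphi=0$, so I would take $\varphi$ to be a positive first eigenfunction of $\mathscr{B}+a$ produced by the variational characterization \eqref{eq:lambda1}; positivity and smoothness then follow by replacing the minimizer by $|u|$, standard elliptic regularity, the strong maximum principle, and Hopf's lemma.

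For $\lda_1\neq 0$, set $s=\mathrm{sgn}(\lda_1)$ and consider the scale-invariant Rayleigh-type quotient
\[
R(u)=\frac{\displaystyle \int_\Omega|\nabla u|^2\,\ud x+\int_{\pa\om}au^2\,\ud S}{\left(\displaystyle \int_{\pa\om}|u|^{p+1}\,\ud S\right)^{2/(p+1)}}
\]
on $\{u\in H^1(\Omega):u|_{\pa\om}\not\equiv 0\}$. By the Lagrange multiplier rule, any minimizer $\varphi$ satisfies $\mathscr{B}\varphi+a\varphi=\mu|\varphi|^{p-1}\varphi$ with $\mu$ a positive multiple of $\inf R$; testing $R$ on a first eigenfunction of $\mathscr{B}+a$ (whose $R$-value has sign $\mathrm{sgn}(\lda_1)$) confirms $\mathrm{sgn}(\inf R)=s$. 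A homogeneous rescaling $\varphi\mapsto c\varphi$ with a suitable $c>0$ then normalizes the nonlinear coefficient to exactly $s\,p/|p-1|$, recovering \eqref{eq:stationary}. Since $R(|\varphi|)\le R(\varphi)$, I may assume $\varphi\ge 0$; elliptic regularity, the strong maximum principle, and Hopf's lemma upgrade this to smooth strict positivity.

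The only substantive step is the finiteness and attainment of $\inf R$. When $\lda_1>0$, Proposition~\ref{prop:sobolev} together with the subcritical trace inequality \eqref{thm:trace} renders $Q(u):=\int_\Omega|\nabla u|^2+\int_{\pa\om}au^2$ equivalent to $\|u\|_{H^1(\Omega)}^2$, and compactness of the trace embedding handles both items routinely. The real obstacle is $\lda_1<0$, where $Q$ is no longer coercive. I plan to proceed by contradiction: given any sequence $u_k$ normalized by $\int_{\pa\om}|u_k|^{p+1}\,\ud S=1$ with $\|u_k\|_{H^1}\to\infty$, set $v_k=u_k/\|u_k\|_{H^1}$; then $\|v_k\|_{L^{p+1}(\pa\om)}\to 0$, so by compactness (along a subsequence) $v_k\rightharpoonup v_*$ in $H^1(\Omega)$, strongly in $L^2(\Omega)$ and $L^2(\pa\om)$, with $v_*|_{\pa\om}=0$. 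Consequently $\int_{\pa\om}av_k^2\to 0$, and from $\|v_k\|_{H^1}^2=1$ one gets
\[
\int_\Omega|\nabla v_k|^2\,\ud x=1-\int_\Omega v_k^2\,\ud x\;\longrightarrow\; 1-\int_\Omega v_*^2\,\ud x>0,
\]
the strict inequality coming from $\|v_*\|_{H^1}\le 1$ together with $v_*|_{\pa\om}=0$ ruling out a nonzero constant limit. Therefore $Q(u_k)=\|u_k\|_{H^1}^2 Q(v_k)\to+\infty$, whereas along this normalization $Q(u_k)=R(u_k)$, which is supposed to converge to $\inf R\in[-\infty,+\infty)$. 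This contradiction simultaneously shows $\inf R>-\infty$ and that every minimizing sequence is bounded in $H^1(\Omega)$; a standard weak-compactness argument together with lower semicontinuity of $Q$ and strong $L^{p+1}(\pa\om)$-convergence then produces the minimizer. The main obstacle is precisely this loss of coercivity when $\lda_1<0$, overcome by the subcritical compactness argument above.
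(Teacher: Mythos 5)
Your proof is correct and follows the same high-level variational scheme as the paper: minimize the scale-invariant Rayleigh-type quotient (the paper's $E_p$), use subcritical compactness to produce a minimizer, then upgrade via elliptic regularity, the strong maximum principle and Hopf's lemma, and finally rescale to normalize the constant to $\mathrm{sgn}(\lda_1)\,p/|p-1|$. Your splitting off of $\lda_1=0$ as a linear eigenvalue problem is cosmetic; the paper handles all signs uniformly by noting $\mathrm{sgn}(Y_p)=\mathrm{sgn}(\lda_1)$, whence a minimizer automatically satisfies $\mathscr{B}\varphi+a\varphi=0$ when $\lda_1=0$, giving the same conclusion. The genuine divergence is in the step you correctly flag as the substantive obstacle: boundedness from below of $E_p$ when $\lda_1<0$. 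The paper establishes the explicit pointwise estimate
\[
\int_{\pa\om}|a|u^2\,\ud S\le\int_\om|\nabla u|^2\,\ud x+C_0\left(\int_{\pa\om}|u|^{p+1}\,\ud S\right)^{2/(p+1)},
\]
via Hölder ($p>1$) or interpolation ($0<p<1$), obtaining directly $Y_p\ge-C_0$; this quantitative bound \eqref{eq:finiteyp} is reused later in the blow-up case of Proposition~\ref{prop:integralbound}. You instead argue by a concentration--compactness contradiction: a normalized sequence with diverging $H^1$ norm rescales to a unit-norm sequence whose traces vanish, forcing the quadratic form to blow up. Your route avoids the $0<p<1$ versus $p>1$ case distinction and is arguably more conceptually transparent, but yields only a qualitative $\inf E_p>-\infty$, not the explicit constant $-C_0$ that the paper invokes downstream. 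Both are valid proofs of this proposition; just be aware that \eqref{eq:finiteyp} is not only a stepping stone here but a tool the paper needs again.
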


\begin{proof}
For $u\in H^1(\om)$, $u\not\equiv 0$ on $\pa \om$, we define
\[
E_p[u]:= \frac{ \int_{\om} |\nabla u|^2\,\ud x + \int_{\pa \om} au^2\,\ud S}{  \left(\int_{\pa \om} |u|^{p+1}\,\ud S \right)^{\frac{2}{p+1}}}.
\]
Let
\begin{equation}\label{eq:energyY}
Y_p=\inf_{\substack{u\in H^1(\om), \\ u\not\equiv 0 \text{ on }\pa \om}} E_p[u].
\end{equation}
If $\lda_1> 0$, then by Proposition \ref{prop:sobolev}, \eqref{thm:trace} and H\"older's inequality, we have $Y_p>0$.  If $\lda_1<0$, then by noticing that  $E_p[\phi_1]<0$, where $\phi_1$ is an eigenfunction associated to $\lda_1$, we see that $Y_p<0$. If $\lda_1=0$, then on one hand, we have $E_p[u]\ge 0$ for all $u\in H^1(\om)$ so that $Y_p\ge 0$, and on the other hand, $E(\phi_1)\le 0$ so that $Y_p\le 0$. Therefore, if $\lda_1=0$, then $Y_p=0$. Hence, 
\[
\mathrm{sgn}(Y_p)=\mathrm{sgn}(\lda_1).
\]
Furthermore,  by an interpolation inequality (when $0<p<1$) or H\"older's inequality  (when $p>1$), we have 
\[
\int_{\pa \om} |a| u^2\,\ud S \le  \int_{\om} |\nabla u|^2\,\ud x + C_0 \left(\int_{\partial\Omega} |u|^{p+1}\,\ud S\right)^\frac{2}{p+1} \quad\mbox{for all } u\in H^1(\Omega),
\]
where $C_0>0$ depends only on $n,\om,p$ and $\|a\|_{L^\infty(\om)}$.  It follows that 
\[
\int_{\om} |\nabla u|^2\,\ud x + \int_{\pa \om} au^2\,\ud S \ge - C_0  \left(\int_{\partial\Omega} |u|^{p+1}\,\ud S\right)^\frac{2}{p+1},
\]
and thus,
\begin{equation}\label{eq:finiteyp}
Y_p\ge -C_0. 
\end{equation}
Since $p$ is subcritical, then by the standard variational method, the compact Sobolev embedding, and the fact that $E[u]=E[|u|]$, we have that $Y_p$ is achieved by some nonnegative function $\varphi\in H^1(\om)$ satisfying $\int_{\pa \om} \varphi^{p+1}\,\ud S=1$ and 
\[
-\Delta \varphi=0 \quad \mbox{in }\om, \quad \pa_\nu \varphi+a \varphi = Y_p \varphi^p \quad \mbox{on }\pa \om
\]
in the distribution sense.  By the regularity result of Cherrier \cite{Cherrier} and those for harmonic functions, we have $\varphi\in C^\infty(\Omega)\cap C^{1,\alpha}(\overline\Omega)$ for some $\alpha>0$.  Then by Hopf's Lemma, we see that $\varphi$ is positive in $\overline\Omega$, and then is smooth in $\overline\Omega$. Finally, since  $\mathrm{sgn}(Y_p)=\mathrm{sgn}(\lda_1)$, we know that if $\lda_1=0$, then $\varphi$ is a solution of  \eqref{eq:steady}, and if $\lda_1\neq 0$, then $\left(\frac{p}{|(p-1)Y_p|}\right)^{\frac{1}{1-p}} \varphi$  is a solution of  \eqref{eq:steady}. 
\end{proof}

The next two propositions are on uniqueness and non-degeneracy of the solutions to the stationary equation \eqref{eq:steady} for $\lambda_1(p-1)<0$.

\begin{prop}\label{prop:unique}
Let $a\in C^\infty(\pa \om)$ and $p$ satisfy \eqref{2}. If $\lambda_1(p-1)<0$, then there exists a unique positive smooth (in $\overline\Omega$) solution of
\[
-\Delta \varphi=0 \quad \mbox{in }\om, \quad \pa_\nu \varphi+a \varphi = \mathrm{sgn}(\lda_1)\frac{p}{|p-1|}\varphi^p \quad \mbox{on }\pa \om.
\]
\end{prop}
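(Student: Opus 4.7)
The plan is to prove uniqueness via a Picone-type identity adapted to the nonlocal Dirichlet-to-Neumann setting. Suppose $\varphi_1,\varphi_2$ are two positive smooth solutions of \eqref{eq:steady} with $\lambda_1(p-1)<0$, and both are harmonic in $\Omega$. For any positive harmonic function $u$ and any $v\in H^1(\Omega)$ with $v>0$, the pointwise Picone identity
\[
|\nabla v|^2 - \nabla\!\left(\tfrac{v^2}{u}\right)\!\cdot\nabla u \;=\; \Bigl|\nabla v - \tfrac{v}{u}\nabla u\Bigr|^2 \;\ge\; 0
\]
combined with $\Delta u=0$ and integration by parts yields the one-sided inequality
\[
\int_\Omega |\nabla v|^2\,\ud x \;\ge\; \int_{\partial\Omega} \tfrac{v^2}{u}\,\partial_\nu u\,\ud S.
\]
I would apply this with $(u,v)=(\varphi_1,\varphi_2)$ and separately $(u,v)=(\varphi_2,\varphi_1)$.

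Substituting the boundary condition $\partial_\nu\varphi_i = -a\varphi_i + \mathrm{sgn}(\lambda_1)\frac{p}{|p-1|}\varphi_i^p$ on the right, and using the energy identity
\[
\int_\Omega |\nabla\varphi_i|^2\,\ud x + \int_{\partial\Omega} a\varphi_i^2\,\ud S = \mathrm{sgn}(\lambda_1)\tfrac{p}{|p-1|}\int_{\partial\Omega}\varphi_i^{p+1}\,\ud S
\]
on the left, the two Picone inequalities reduce respectively to
\[
\mathrm{sgn}(\lambda_1)\!\int_{\partial\Omega}\!\varphi_2^2\bigl(\varphi_2^{p-1}-\varphi_1^{p-1}\bigr)\ud S\ge 0,\qquad \mathrm{sgn}(\lambda_1)\!\int_{\partial\Omega}\!\varphi_1^2\bigl(\varphi_1^{p-1}-\varphi_2^{p-1}\bigr)\ud S\ge 0.
\]
Adding these gives
\[
\mathrm{sgn}(\lambda_1)\int_{\partial\Omega} (\varphi_2^2-\varphi_1^2)\bigl(\varphi_2^{p-1}-\varphi_1^{p-1}\bigr)\,\ud S \;\ge\; 0.
\]
The integrand has a definite sign: since $t\mapsto t^{p-1}$ is strictly decreasing when $0<p<1$ and strictly increasing when $p>1$, the product $(\varphi_2^2-\varphi_1^2)(\varphi_2^{p-1}-\varphi_1^{p-1})$ has the same sign as $p-1$ pointwise, with equality only where $\varphi_1=\varphi_2$. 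Multiplying by $\mathrm{sgn}(\lambda_1)$ and invoking the hypothesis $\lambda_1(p-1)<0$, the above integrand becomes pointwise $\le 0$, forcing equality everywhere. Hence $\varphi_1\equiv\varphi_2$ on $\partial\Omega$, and by harmonicity and the Poisson representation $\varphi_1\equiv\varphi_2$ on $\overline\Omega$.

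The main technical point is justifying the Picone step: one needs $v^2/u\in H^1(\Omega)$ so that the integration by parts produces the correct boundary integral. This is where the positivity of both solutions on the compact set $\overline\Omega$ (already granted from Proposition \ref{prop:existencesteady} and Hopf's lemma) is essential, since then $\varphi_1,\varphi_2$ are bounded below by a positive constant and the quotient $\varphi_2^2/\varphi_1$ lies in $H^1(\Omega)\cap C^\infty(\overline\Omega)$, making every step above completely rigorous.
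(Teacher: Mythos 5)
Your proof is correct, and it takes a genuinely different route from the paper's argument. The paper proves uniqueness via a sliding method: assuming $u(x_0)<v(x_0)$ at some boundary point, it scales $u_\lambda=\lambda u$, takes $\bar\lambda=\inf\{\lambda\ge 0: u_\lambda\ge v\text{ on }\partial\Omega\}$, shows that $u_{\bar\lambda}$ is a supersolution of the same boundary problem, and then uses the strong maximum principle together with Hopf's lemma for harmonic functions at the touching point to conclude $u_{\bar\lambda}\equiv v$, whence $\bar\lambda=1$. Your argument is instead purely variational: the Picone inequality for the Dirichlet energy, combined with the energy identity $\int_\Omega|\nabla\varphi_i|^2+\int_{\partial\Omega}a\varphi_i^2=\mathrm{sgn}(\lambda_1)\tfrac{p}{|p-1|}\int_{\partial\Omega}\varphi_i^{p+1}$ obtained by testing the equation against $\varphi_i$ itself, reduces everything to the single integral inequality $\mathrm{sgn}(\lambda_1)\int_{\partial\Omega}(\varphi_2^2-\varphi_1^2)(\varphi_2^{p-1}-\varphi_1^{p-1})\,\ud S\ge 0$, and the hypothesis $\lambda_1(p-1)<0$ makes the integrand pointwise $\le 0$. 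Both arguments crucially rely on $\min_{\overline\Omega}\varphi_i>0$; the Picone route additionally exploits harmonicity to pass the test-function computation entirely to the boundary, which is what makes the calculation so clean here. The sliding method is the more elementary tool and generalizes more readily to non-variational problems, while the Picone/energy argument pinpoints exactly where $\lambda_1(p-1)<0$ enters (the sign of a product of monotone differences) and gives a one-line reason the dichotomy fails when $\lambda_1(p-1)\ge 0$. Your closing remark about $\varphi_2^2/\varphi_1\in H^1(\Omega)\cap C^\infty(\overline\Omega)$ correctly addresses the only regularity needed to run the integration by parts.
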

\begin{proof}
We only need to prove the uniqueness here.  

Suppose $u, v\in C^2(\overline\Omega)$ are two positive solutions. Suppose by contradiction that there exists $x_0\in\partial\Omega$ such that $u(x_0)<v(x_0)$.  

For $\lambda\ge 0$, define
\[
u_\lambda=\lambda u
\]
and 
\[
\bar\lambda=\inf\{\lambda\ge 0: u_\lambda\ge v\ \mbox{on }\partial\Omega\}.
\]
Since $u(x_0)<v(x_0)$, we have $\bar\lambda\ge 1$, and thus,
\[
\pa_\nu u_{\bar\lambda}+a u_{\bar\lambda} = \mathrm{sgn}(\lda_1)\frac{p}{|p-1|}{\bar\lambda}^{1-p}u_{\bar\lambda}^p\ge \mathrm{sgn}(\lda_1)\frac{p}{|p-1|}u_{\bar\lambda}^p \quad \mbox{on }\pa \om.
\]
Then, we have $-\Delta (u_{\bar\lambda}-v)=0 \  \mbox{in }\om$, and
\[
\pa_\nu (u_{\bar\lambda}-v)+a (u_{\bar\lambda}-v) + \mathrm{sgn}(\lda_1)\frac{p}{|p-1|}(u_{\bar\lambda}^p-v^p) \ge 0 \quad \mbox{on }\pa \om.
\]
By the definition of $\bar\lambda$, $u_{\bar\lambda}\ge v$ on $\partial\Omega$, and there exists $\bar x\in\partial\Omega$ such that $u_{\bar\lambda}(\bar x)=v(\bar x)$. It follows from the maximum principle and  Hopf's lemma for harmonic functions that
\[
u_{\bar\lambda}\equiv v\quad \mbox{ in }\overline\Omega.
\]
Hence, by the equations of $v$ and $u_{\bar\lambda}$, we have
\[
\mathrm{sgn}(\lda_1)\frac{p}{|p-1|}v^p=\pa_\nu v+a v = \pa_\nu u_{\bar\lambda}+a u_{\bar\lambda} = \mathrm{sgn}(\lda_1)\frac{p}{|p-1|}\bar\lambda^{1-p}u_{\bar\lambda}^p.
\]
Hence, $\bar\lambda=1$, and thus,
\[
u\equiv v.
\]
This is a contradiction to the existence of $x_0$. This finishes the proof.
\end{proof}

\begin{prop}\label{prop:nondegenerate}
Let $a\in C^\infty(\pa \om)$ and $p$ satisfy \eqref{2}. Suppose $\lambda_1(p-1)<0$, where $\lambda_1$ is defined in \eqref{eq:lambda1}. Let $\varphi$ be the solution in Proposition \ref{prop:unique}. Then the linearized operator $\mathcal{L}_\varphi$ defined in \eqref{eq:linearized} has a trivial kernel.
\end{prop}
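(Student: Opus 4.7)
The plan is to use a Picone-type argument based on dividing the kernel element by $\varphi$, which is possible because $\varphi>0$ on $\overline\Omega$ by Proposition \ref{prop:unique}. The hypothesis $\lambda_1(p-1)<0$ will enter at exactly one point to give the correct sign in a boundary integral.

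Let $\phi\in\mathrm{Ker}\,\mathcal{L}_\varphi$. By the definition \eqref{eq:DN} of $\mathscr{B}$ and elliptic regularity (since $a$ and $\varphi$ are smooth), the harmonic extension $\Phi$ of $\phi$ to $\Omega$ is smooth up to the boundary and satisfies
\begin{equation*}
\Delta\Phi=0\ \text{in }\Omega,\qquad \partial_\nu\Phi+a\phi=\mathrm{sgn}(\lambda_1)\frac{p^2}{|p-1|}\varphi^{p-1}\phi\ \text{on }\partial\Omega.
\end{equation*}
Since $\varphi>0$ on $\overline\Omega$, set $\psi:=\Phi/\varphi\in C^\infty(\overline\Omega)$. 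A direct computation gives
\begin{equation*}
\mathrm{div}(\varphi^2\nabla\psi)=\varphi^2\Delta\psi+2\varphi\nabla\varphi\cdot\nabla\psi=\varphi\Delta\Phi-\Phi\Delta\varphi=0\quad\text{in }\Omega,
\end{equation*}
using $\Delta\Phi=\Delta\varphi=0$. On $\partial\Omega$, I compute $\varphi\partial_\nu\psi=\partial_\nu\Phi-\psi\partial_\nu\varphi$ and substitute the boundary equations for $\Phi$ and $\varphi$ (from \eqref{eq:steady}); the $a$ terms cancel and the remaining terms simplify to
\begin{equation*}
\varphi\,\partial_\nu\psi=\mathrm{sgn}(\lambda_1)\frac{p(p-1)}{|p-1|}\varphi^p\psi=\mathrm{sgn}(\lambda_1)\,\mathrm{sgn}(p-1)\,p\,\varphi^p\psi.
\end{equation*}
Here is where the hypothesis $\lambda_1(p-1)<0$ is used: it forces $\mathrm{sgn}(\lambda_1)\mathrm{sgn}(p-1)=-1$, so
\begin{equation*}
\partial_\nu\psi=-p\,\varphi^{p-1}\psi\quad\text{on }\partial\Omega.
\end{equation*}

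Next, I multiply $\mathrm{div}(\varphi^2\nabla\psi)=0$ by $\psi$ and integrate by parts over $\Omega$, using the boundary condition just derived:
\begin{equation*}
0=\int_\Omega\psi\,\mathrm{div}(\varphi^2\nabla\psi)\,\ud x=-\int_\Omega\varphi^2|\nabla\psi|^2\,\ud x+\int_{\partial\Omega}\varphi^2\psi\,\partial_\nu\psi\,\ud S=-\int_\Omega\varphi^2|\nabla\psi|^2\,\ud x-p\int_{\partial\Omega}\varphi^{p+1}\psi^2\,\ud S.
\end{equation*}
Since $p>0$ and $\varphi>0$, both terms on the right are nonpositive, so each must vanish. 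The boundary integral being zero together with $\varphi>0$ on $\partial\Omega$ yields $\psi=0$ on $\partial\Omega$, hence $\phi=\psi\varphi=0$, proving $\mathrm{Ker}\,\mathcal{L}_\varphi=\{0\}$.

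The only nontrivial step is getting the correct sign on the boundary for $\partial_\nu\psi$; the rest is a clean divergence-form identity once one spots that $\mathrm{div}(\varphi^2\nabla\psi)=0$. I do not anticipate any real obstacle beyond this algebraic bookkeeping.
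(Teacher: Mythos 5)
Your proof is correct, and it takes a genuinely different route from the paper. The paper argues via spectral theory: it observes that $\varphi$ is a positive eigenfunction of the weighted eigenvalue problem $\mathcal{L}_\varphi e = \mu\varphi^{p-1}e$ with eigenvalue $p$, invokes the standard fact that a positive eigenfunction must correspond to the smallest eigenvalue, concludes all eigenvalues are $\ge p>0$, and hence $0$ is not an eigenvalue. You instead carry out the underlying Picone-type computation directly: setting $\psi=\Phi/\varphi$ (legitimate since $\varphi>0$ on $\overline\Omega$), you obtain the exact identity $\mathrm{div}(\varphi^2\nabla\psi)=0$ in $\Omega$, derive the Robin-type boundary condition $\partial_\nu\psi=-p\,\varphi^{p-1}\psi$ (where the crucial sign comes from $\mathrm{sgn}(\lambda_1)\,\mathrm{sgn}(p-1)=-1$, i.e.\ precisely from $\lambda_1(p-1)<0$), and one integration by parts then gives $\int_\Omega\varphi^2|\nabla\psi|^2\,\ud x + p\int_{\partial\Omega}\varphi^{p+1}\psi^2\,\ud S=0$, forcing $\psi\equiv 0$ on $\partial\Omega$ and hence $\phi\equiv 0$. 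Your approach is more elementary and self-contained: it avoids invoking the Krein--Rutman/first-eigenfunction characterization for the Steklov-type operator $\mathcal{L}_\varphi$, at the cost of showing only that $0$ is not in the spectrum rather than that the entire spectrum lies in $[p,\infty)$ as the paper does in passing. Both are valid; the step where you use smoothness of $\phi$ (which follows from elliptic regularity for the Robin problem satisfied by $\Phi$, since $a$ and $\varphi$ are smooth) is correctly noted.
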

\begin{proof}
Consider the eigenvalue problem:
\[
\mathcal{L}_\varphi\phi:=\mathscr{B}\phi+a\phi - \mathrm{sgn}(\lda_1)\frac{p^2}{|p-1|} \varphi^{p-1}\phi=\mu \varphi^{p-1} \phi.
\]
Since $\varphi$ is positive and satisfies
\[
\mathcal{L}_\varphi \varphi=p \varphi^{p-1} \varphi,
\]
then $p$ must be the first eigenvalue of $\mathcal{L}_\varphi$ and $\varphi$ must be a corresponding first eigenfunction. Therefore, all the eigenvalues of $\mathcal{L}_\varphi$ are larger than or equal to $p$. The conclusion follows.
\end{proof}

\section{Existence, uniqueness, and infinite speed of propagation}\label{sec:existence}

In this section, we first show that  the nonlinear boundary diffusion problem \eqref{1}--\eqref{eq:initial} has a unique solution on a small time interval. The following a priori estimates in Sobolev spaces were proved in Lemma 3.4 of Brendle \cite{brendle2002generalization}.

\begin{lem}\label{L2}
Let $0 < p < \infty$. Let $\phi$ be a smooth solution of the linear initial boundary value problem
\begin{equation*}
\begin{split}
  \Delta \phi   &=   0  \quad  \mbox{in }     \Omega\times (0,\infty),     \\
      \partial_t\phi &= -\frac{1}{p}u^{-(p-1)}\partial_\nu \phi + f  \quad  \mbox{on }     \partial \Omega\times (0,\infty),                 \\
    \phi &=   0       \quad  \mbox{on }     \partial \Omega\times \{ t=0 \},
\end{split}
\end{equation*}
where $c_0\le u\le C_0$ on $\Omega\times (0,\infty)$ for some positive constants $c_0$ and $C_0$.  In addition, we assume that $u$ satisfies
$$\|u\|_{W^{m,2}( \partial\Omega \times [0,T]) }   \leq    C_1$$ for some nonnegative integer $m$. Then there exists $C>0$ depending only on $n$, $p$, $\Omega$, $m$, $T$, $c_0$, $C_0$ and $C_1$ such that
$$
\|\phi\|_{W^{m+1,2}( \partial\Omega \times [0,T]) }    \leq    C \|f\|_{W^{m,2}(\partial\Omega\times [0,T])}.
$$
\end{lem}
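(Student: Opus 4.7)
The plan is an induction on $m$. The base case $m=0$ is a parabolic-type energy estimate obtained by testing the equation against $\partial_t\phi$ (weighted to cancel the coefficient $u^{-(p-1)}$), and higher $m$ follows by differentiating the equation tangentially and iterating.

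For the base case, I would multiply the boundary equation by $pu^{p-1}\partial_t\phi$ and integrate over $\partial\Omega$. Since $\phi(\cdot,t)$ is harmonic in $\Omega$, the cross term gives
\[
\int_{\partial\Omega} \partial_t\phi\,\partial_\nu\phi\,\ud S = \int_\Omega \nabla(\partial_t\phi)\cdot\nabla\phi\,\ud x = \tfrac{1}{2}\tfrac{d}{dt}\int_\Omega|\nabla\phi|^2\,\ud x.
\]
After Cauchy-Schwarz absorption on the right and use of $c_0 \le u \le C_0$, integration in $t$ from $0$ to any $T' \in [0,T]$ (with $\phi(\cdot,0)=0$) would yield
\[
\|\partial_t\phi\|_{L^2(\partial\Omega\times[0,T])}^2 + \sup_{t\in[0,T]}\|\nabla\phi(\cdot,t)\|_{L^2(\Omega)}^2 \le C\|f\|_{L^2(\partial\Omega\times[0,T])}^2.
\]
To pick up tangential regularity on $\partial\Omega$, I would rewrite the equation as $\mathscr{B}\phi = pu^{p-1}(f-\partial_t\phi) \in L^2(\partial\Omega\times[0,T])$; since $\mathscr{B}$ is a first-order elliptic pseudodifferential operator on the closed manifold $\partial\Omega$, elliptic regularity gives $\|\phi(\cdot,t)\|_{H^1(\partial\Omega)} \le C(\|\mathscr{B}\phi(\cdot,t)\|_{L^2(\partial\Omega)}+\|\phi(\cdot,t)\|_{L^2(\partial\Omega)})$, closing the $W^{1,2}(\partial\Omega\times[0,T])$ bound.

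For the inductive step, applying a first-order derivative $D$ (either $\partial_t$ or a smooth tangential vector field on $\partial\Omega$) to the rewritten equation $\partial_t\phi + \tfrac{1}{p}u^{-(p-1)}\mathscr{B}\phi = f$ yields
\[
\partial_t(D\phi) + \tfrac{1}{p}u^{-(p-1)}\mathscr{B}(D\phi) = Df - \tfrac{1}{p}(Du^{-(p-1)})\mathscr{B}\phi - \tfrac{1}{p}u^{-(p-1)}[D,\mathscr{B}]\phi,
\]
which is of the same form as the original with $D\phi$ replacing $\phi$ and a modified source. Invoking the induction hypothesis (that the estimate holds at level $m-1$, for this larger inhomogeneous class) and summing over a spanning set of operators $D$ would give the desired $W^{m+1,2}$ bound on $\phi$.

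The main obstacle will be controlling the commutator source terms in the inductive step. The products $(Du^{-(p-1)})\mathscr{B}\phi$ and $u^{-(p-1)}[D,\mathscr{B}]\phi$ must be estimated in $W^{m-1,2}(\partial\Omega\times[0,T])$, which relies on Moser-type product inequalities on this compact manifold, pseudodifferential calculus to treat $[D,\mathscr{B}]$ as a first-order operator on $\partial\Omega$, the a priori $L^\infty$ bound on $u$, and careful bookkeeping of the iteration. A minor subtlety arises when $D=\partial_t$: the commuted initial data $(\partial_t\phi)(\cdot,0)=f(\cdot,0)$ need not vanish, which is handled by imposing compatibility (built into the assumed smoothness of $\phi$) or by subtracting a suitable corrector at the level of the source.
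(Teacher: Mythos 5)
The paper does not prove this lemma; it simply cites it as Lemma~3.4 of Brendle's Yamabe flow paper \cite{brendle2002generalization}, and Lemma~\ref{L2} is used as an imported black box both in Theorem~\ref{thm:short} and in Theorem~\ref{thm:higherestimate}. So there is no paper proof to compare against, and what you have written is an honest blind reconstruction.

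Your base case $m=0$ is correct and essentially the one any proof would use: multiplying the boundary equation by $pu^{p-1}\partial_t\phi$ produces the weight that cancels $u^{-(p-1)}$, the harmonicity of $\phi(\cdot,t)$ turns $\int_{\partial\Omega}\partial_\nu\phi\,\partial_t\phi\,\ud S$ into $\tfrac12\tfrac{d}{dt}\int_\Omega|\nabla\phi|^2\,\ud x$, and Young's inequality plus $\phi(\cdot,0)=0$ yields $\|\partial_t\phi\|_{L^2}+\sup_t\|\nabla\phi(\cdot,t)\|_{L^2(\Omega)}\le C\|f\|_{L^2}$. Reading off $\mathscr{B}\phi=pu^{p-1}(f-\partial_t\phi)\in L^2$ and invoking elliptic regularity for the order-one elliptic pseudodifferential operator $\mathscr{B}$ (together with the zero-th order term $\|\phi\|_{L^2}$, which you also control via $\phi(\cdot,0)=0$ and the $L^2$ bound on $\partial_t\phi$) gives the full $W^{1,2}$ bound. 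All of this is sound.

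The genuine gap is in the inductive step, and it is larger than your ``careful bookkeeping'' phrasing suggests. After applying $D$, the source term you must feed back into the level-$(m-1)$ estimate contains $(Du^{-(p-1)})\,\mathscr{B}\phi$. There you have $Du^{-(p-1)}=-(p-1)u^{-p}Du$, so the $L^\infty$ bound on $u$ buys you nothing beyond boundedness of the prefactor $u^{-p}$; you are left with the product $Du\cdot\mathscr{B}\phi$ where, at the lowest inductive levels, both factors are merely $L^2$ on the $n$-dimensional space--time manifold $\partial\Omega\times[0,T]$. That product is in $L^1$, not $L^2$, and the Moser/Kato--Ponce inequalities you invoke require at least one factor in $L^\infty$; to get $Du\in L^\infty$ or $\mathscr{B}\phi\in L^\infty$ from Sobolev embedding one needs $m-1>n/2$, which fails for the small $m$ allowed by the statement. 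So the induction as written does not close for small $m$. This is precisely the step that Brendle's proof has to finesse (e.g.\ by freezing coefficients locally in time and treating the low-$m$ case by a perturbation argument, or by working in a bootstrapping framework where $u$ itself carries higher regularity). As written, your proposal identifies the right obstacle but does not solve it, and an appeal to ``Moser-type product inequalities'' plus ``the a priori $L^\infty$ bound on $u$'' is not enough to bridge it.
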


Then, the short time existence for \eqref{1}--\eqref{eq:initial} follows from Lemma \ref{L2} and the  implicit function theorem. The proof is standard and we omit it here.
\begin{thm}\label{thm:short}
Let $n\ge 2$, $ \Omega \subset \R^n$   be a bounded smooth domain, $a\in C^\infty(\pa \om)$, $0<p<\infty$, and $u_0 \in C^\infty(\pa \om) $ be a  positive function. Then the initial boundary value problem \eqref{1}--\eqref{eq:initial} has a unique smooth positive solution on a small time interval.
\end{thm}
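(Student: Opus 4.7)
I would apply the implicit function theorem in Banach spaces of Sobolev regularity on $\partial\Omega\times[0,T]$, with Lemma \ref{L2} supplying the isomorphism property of the linearization. Since $u$ is harmonic in $\Omega$ at each fixed $t$, the problem is equivalent to the boundary evolution equation $p u^{p-1}\partial_t u + \mathscr{B}u + au = 0$ on $\partial\Omega\times(0,T)$ with $u(\cdot,0)=u_0$, where $\mathscr{B}$ is the Dirichlet-to-Neumann map from \eqref{eq:DN}. Fix a large integer $m$ so that $W^{m+1,2}(\partial\Omega\times[0,T])$ embeds continuously into $C^1(\overline\Omega\times[0,T])$ (after harmonic extension), and set
\[
X_T = \{\phi\in W^{m+1,2}(\partial\Omega\times[0,T]):\phi|_{t=0}=0\},\qquad Y_T = W^{m,2}(\partial\Omega\times[0,T]).
\]
Identifying $u_0$ with its constant-in-$t$ extension, I would look for $u=u_0+\phi$ with $\phi\in X_T$, and on a neighborhood $\mathcal{U}\subset X_T$ of $0$ chosen so that $u_0+\phi\ge \tfrac12\min_{\partial\Omega}u_0$, define
\[
\Psi:\mathcal{U}\to Y_T,\qquad \Psi(\phi)=p(u_0+\phi)^{p-1}\partial_t\phi+\mathscr{B}(u_0+\phi)+a(u_0+\phi).
\]
This map is smooth between Banach spaces, and $\Psi(0)=\mathscr{B}u_0+au_0$ is a fixed function on $\partial\Omega$, independent of $t$.

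\textbf{Linearization and IFT.} The Fréchet derivative is $D\Psi(0)[\psi]=p u_0^{p-1}\partial_t\psi+\mathscr{B}\psi+a\psi$. Solving $D\Psi(0)[\psi]=f$ with $\psi|_{t=0}=0$ is equivalent to
\[
\partial_t\psi=-\tfrac{1}{p}u_0^{-(p-1)}\partial_\nu\psi+\tfrac{1}{p}u_0^{-(p-1)}(f-a\psi),
\]
which is precisely the linear problem covered by Lemma \ref{L2}, with the zero-order term $a\psi$ absorbed by a Neumann-series argument valid for small $T$. Hence $D\Psi(0):X_T\to Y_T$ is a Banach space isomorphism. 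The implicit function theorem then yields $\delta,\rho>0$ and a smooth local inverse $\Psi^{-1}:B_\delta(\Psi(0))\subset Y_T\to B_\rho(0)\subset X_T$.

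\textbf{Smallness from $T$, regularity, and uniqueness.} Since $\Psi(0)$ is independent of time,
\[
\|\Psi(0)\|_{Y_T}\le T^{1/2}\|\mathscr{B}u_0+au_0\|_{W^{m,2}(\partial\Omega)}\xrightarrow{T\to 0}0,
\]
so for $T$ sufficiently small, $0\in B_\delta(\Psi(0))$ and $\phi^\ast:=\Psi^{-1}(0)$ gives a strong solution $u=u_0+\phi^\ast$. Positivity of $u$ on $\overline\Omega\times[0,T]$ for small $T$ follows from the continuous embedding into $C^0$ and $u_0>0$. To upgrade $u$ from $W^{m+1,2}$ to $C^\infty(\overline\Omega\times[0,T])$, I would bootstrap: once $u$ is known to have regularity $W^{m+1,2}$, the coefficient $u^{-(p-1)}$ is sufficiently regular to re-apply Lemma \ref{L2} with index $m+1$ to the time-differentiated equation, and elliptic regularity for the harmonic extension propagates smoothness into $\Omega$. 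Uniqueness follows either from the local uniqueness built into the IFT (iterated on a time-continuation), or directly from an energy estimate on $u_1-u_2$ using the mean-value identity $u_1^p-u_2^p=p\int_0^1(su_1+(1-s)u_2)^{p-1}ds\,(u_1-u_2)$ together with Lemma \ref{L2}.

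\textbf{Main obstacle.} The most delicate point is ensuring that the IFT neighborhood radius $\delta$ does not collapse as $T\to 0$; this requires showing that $\|D\Psi(0)^{-1}\|_{Y_T\to X_T}$ and the modulus of continuity of $\phi\mapsto D\Psi(\phi)$ on a fixed-size ball are uniformly bounded as $T\to 0$. Both reduce to a careful rescaling of the constants in Lemma \ref{L2} (noting that only $\|u\|_{W^{m,2}}$, $c_0$, and $C_0$ enter there, all of which are controlled by $u_0$ alone), together with the fact that the nonlinearity $(u_0+\phi)^{p-1}-u_0^{p-1}$ is Lipschitz in $\phi$ on the neighborhood where $u_0+\phi$ stays bounded away from zero.
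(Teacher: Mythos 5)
Your proposal is a correct, detailed implementation of exactly the argument the paper alludes to but omits: the paper's entire proof is the sentence that short-time existence ``follows from Lemma~\ref{L2} and the implicit function theorem,'' and you have filled in that argument faithfully (reduction to the boundary evolution via the Dirichlet-to-Neumann map, the map $\Psi$ on $u_0+\phi$ with $\phi|_{t=0}=0$, isomorphism of $D\Psi(0)$ via Lemma~\ref{L2}, smallness of $\|\Psi(0)\|_{Y_T}$ as $T\to 0$, bootstrap and uniqueness). The only cosmetic difference is that absorbing the zero-order term $a\psi$ is most cleanly done by noting that the a priori estimate of Lemma~\ref{L2} holds verbatim with a bounded zero-order coefficient added, rather than via a Neumann series whose contraction constant as $T\to 0$ requires a little extra care in the $W^{m,2}$ space-time norms.
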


We will use the next comparison principle to show various dynamics of the solutions to \eqref{1}--\eqref{eq:initial}.

\begin{prop}[Comparison Principle]\label{prop:comparison}
Let $0< p<\infty$, $a\in C^\infty(\pa \om)$, and $b_j\in C^\infty(\overline \om)$ for $j=1,\cdots, n$. Let $c\in C^\infty(\pa \om)$ be positive everywhere. Suppose $u_1$ and $u_2$ are two smooth positive functions in $\overline\Omega\times[0,T)$ satisfying $u_1(\cdot,0)\le u_2(\cdot,0)$ in $\overline \Omega$, 
\[
\begin{split}
  \Delta u_1  - \sum_{j=1}^n b_j \partial_{x_j} u_1&\ge  0  \quad  \mbox{in }     \Omega\times (0,T),     \\
   \partial_{t}u_1^{p}&\le   - c \partial_{\nu}u_1  -  a u_1     \quad  \mbox{on }     \partial \Omega\times (0,T),
\end{split}
\]
and
\[
\begin{split}
  \Delta u_2 - \sum_{j=1}^n b_j \partial_{x_j} u_2 &\le   0  \quad  \mbox{in }     \Omega\times (0,T),     \\
   \partial_{t}u_2^{p}&\ge   - c \partial_{\nu}u_2  -  a u_2     \quad  \mbox{on }     \partial \Omega\times (0,T).
\end{split}
\]
Then
\[
u_1\le u_2 \quad\mbox{on }\overline\Omega\times[0,T).
\]
\end{prop}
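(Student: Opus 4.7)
The plan is a standard first-contact/perturbation argument adapted to the nonlinear boundary evolution. Fix an arbitrary $T'<T$ and set
\[
v_\varepsilon(x,t):= u_2(x,t)+\varepsilon e^{Kt},
\]
for $\varepsilon>0$ small and a large constant $K>0$ to be chosen depending on $T'$ and the $C^1$-norms and positive lower bounds of $u_1,u_2$ on $\overline\Omega\times[0,T']$ (all finite since $u_1,u_2$ are smooth and positive there). I will show $u_1<v_\varepsilon$ on $\overline\Omega\times[0,T']$; letting $\varepsilon\to 0$ and then $T'\to T$ gives the claim.

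Assume for contradiction that this fails, and let $t_1\in(0,T']$ be the infimum of times at which the inequality fails. By continuity and the strict ordering $u_1(\cdot,0)\le u_2(\cdot,0)<v_\varepsilon(\cdot,0)$, there is $x_1\in\overline\Omega$ with $(u_1-v_\varepsilon)(x_1,t_1)=0$ and $u_1-v_\varepsilon\le 0$ on $\overline\Omega\times[0,t_1]$. Since $v_\varepsilon-u_2$ is spatially constant, the uniformly elliptic operator $L:=\Delta-\sum_j b_j\partial_{x_j}$, which has no zeroth-order term, satisfies $L(u_1-v_\varepsilon)=L(u_1)-L(u_2)\ge 0$ in $\Omega$, so the weak maximum principle forces the maximum $0$ over $\overline\Omega$ to be attained on $\partial\Omega$; hence we may take $x_1\in\partial\Omega$.

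At the first-contact point $(x_1,t_1)\in\partial\Omega\times(0,T']$ I extract two pieces of information: (i) since $x_1$ is a maximum of $u_1-v_\varepsilon$ on $\overline\Omega$, one has $\partial_\nu(u_1-v_\varepsilon)(x_1,t_1)\ge 0$, and as $\partial_\nu v_\varepsilon=\partial_\nu u_2$, this reads $\partial_\nu u_1\ge\partial_\nu u_2$; (ii) since $s\mapsto s^p$ is strictly increasing on $(0,\infty)$ and $(u_1-v_\varepsilon)(x_1,\cdot)\le 0$ on $[0,t_1]$ with equality at $t_1$, it follows that $\partial_t(u_1^p-v_\varepsilon^p)(x_1,t_1)\ge 0$. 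Now combining the boundary inequalities for $u_1,u_2$ at $(x_1,t_1)$ and writing $\partial_t v_\varepsilon^p=\partial_t u_2^p+p(v_\varepsilon^{p-1}-u_2^{p-1})\partial_t u_2+pv_\varepsilon^{p-1}K\varepsilon e^{Kt_1}$ gives
\[
\partial_t(u_1^p-v_\varepsilon^p)(x_1,t_1)\;\le\; -c\bigl(\partial_\nu u_1-\partial_\nu u_2\bigr)\,-\,a(u_1-u_2)\,-\,p(v_\varepsilon^{p-1}-u_2^{p-1})\partial_t u_2\,-\,p v_\varepsilon^{p-1} K\varepsilon e^{Kt_1}.
\]
By (i) and $c>0$ the first term is $\le 0$; at $(x_1,t_1)$ one has $v_\varepsilon=u_1$ and $u_1-u_2=\varepsilon e^{Kt_1}$, so the mean value theorem yields $|v_\varepsilon^{p-1}-u_2^{p-1}|\le C\varepsilon e^{Kt_1}$ with $C$ depending on $p$ and on the uniform positive bounds for $u_1,u_2$ on $\overline\Omega\times[0,T']$. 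Using also the uniform bounds for $a$ and $\partial_t u_2$ and a uniform positive lower bound for $u_1$, the right-hand side is dominated by $(C_1-C_2 K)\varepsilon e^{Kt_1}$ for constants $C_1,C_2>0$ independent of $K$ and $\varepsilon$. Choosing $K$ large enough makes this strictly negative, contradicting (ii).

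The main obstacle is precisely the nonlinearity of $s\mapsto s^p$ on the boundary: because the time derivative acts on $u^p$ and not on $u$, one cannot directly subtract the two boundary inequalities, and the perturbation $\varepsilon e^{Kt}$ is \emph{not} itself a supersolution. The argument works because (a) the error introduced by the nonlinearity is controlled linearly in $u_1-u_2$ by the mean value theorem, which is legal thanks to the smoothness and uniform positivity of $u_1,u_2$ on $\overline\Omega\times[0,T']$, and (b) the explicit factor $K$ coming from differentiating $\varepsilon e^{Kt}$ can be chosen large to dominate that error term while the adverse boundary term $-c\,\partial_\nu(u_1-u_2)$ has the favorable sign at the contact point.
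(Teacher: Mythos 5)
Your proof is correct, and it is essentially the same argument as the paper's: the paper linearizes the difference $u_1-u_2$ via an integral form of the mean value theorem on $s\mapsto s^p$, inserts an exponential factor $e^{-Ct}$ to give the zeroth-order coefficient a good sign, and appeals to the elliptic maximum principle and Hopf's lemma, while you achieve the same thing by perturbing the supersolution to $u_2+\varepsilon e^{Kt}$ and running the explicit first-contact argument at the boundary. The key ingredients — the mean-value-theorem control $|v_\varepsilon^{p-1}-u_2^{p-1}|\le C|v_\varepsilon-u_2|$ (valid since $u_1,u_2$ are bounded above and away from $0$ on $\overline\Omega\times[0,T']$), the favorable sign of $c\,\partial_\nu(u_1-u_2)$ at the contact point, and the large exponential rate dominating the linear error — coincide with the paper's; your version merely fleshes out the contact-point reasoning that the paper compresses into ``it follows from the maximum principle and Hopf's lemma.''
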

\begin{proof}
The difference $u_1-u_2$ satisfies
\[
\begin{split}
&  -\Delta (u_1-u_2) +\sum_{j=1}^n b_j \partial_{x_j} (u_1-u_2) \le  0  \quad  \mbox{in }     \Omega\times (0,T),     \\
&   pu_1^{p-1}\partial_{t}(u_1-u_2)\le   - c \partial_{\nu}(u_1-u_2)  -g(x,t) (u_1-u_2)     \quad  \mbox{on }     \partial \Omega\times (0,T),
\end{split}
\]
where
\[
g(x,t)=a(x)+p(p-1)\partial_t u_2(x,t) \cdot \int_0^1 [\lambda u_1(x,t)+(1-\lambda) u_2(x,t)]^{p-2}\,\ud\lambda.
\]
Let $s\in (0,T)$. Then $u_1$ and $u_2$ are smooth and positive in $\overline\Omega\times[0,s]$, and $g$ is bounded on $\partial\Omega\times[0,s]$. Choose $C>0$ such that
\[
Cpu_1^{p-1}+g>0\quad\mbox{on }\partial\Omega\times[0,s].
\]
Let $v(x,t)=e^{-Ct}(u_1(x,t)-u_2(x,t))$. Then
\[
\begin{split}
  -\Delta v +\sum_{j=1}^n b_j \partial_{x_j} v&\le  0  \quad  \mbox{in }     \Omega\times (0,s],     \\
   pu_1^{p-1}\partial_{t}v&\le   -  c\partial_{\nu}v  -(Cpu_1^{p-1}+g) v    \quad  \mbox{on }     \partial \Omega\times (0,s],\\
   v(\cdot,0)& \le 0 \quad  \mbox{in }     \Omega.
\end{split}
\]
Then, it follows from the maximum principle and Hopf's lemma for elliptic equations that 
\[
v\le 0\quad  \mbox{on }     \overline \Omega\times [0,s].
\] 
That is,
\[
u_1\le u_2 \quad  \mbox{on }     \overline \Omega\times [0,s].
\]
Since $s\in (0,T)$ is arbitrary, we have
\[
u_1\le u_2 \quad  \mbox{on }     \overline \Omega\times [0,T).
\]
\end{proof}

Now, let us go back to the equations \eqref{1}--\eqref{eq:initial} for $p$ satisfying \eqref{2}. Using the comparison principle in Proposition \ref{prop:comparison}, we will derive the following estimates.

\begin{prop}\label{cor:boundednessbefore}
Let $n\ge 2$, $ \Omega \subset \R^n$   be a bounded smooth domain, $a\in C^\infty(\pa \om)$, $p$ satisfy \eqref{2}, $u_0 \in C^\infty(\pa \om) $ be a  positive function, and $u$ be a smooth positive solution of  \eqref{1}--\eqref{eq:initial} on $\overline\Omega\times[0,T]$ for some $T>0$. Then there exist $s_1>0$ depending only on $n,p,a,\Omega$ and $\min_{\partial\Omega} u_0$, $s_2>0$ depending only on $n,p,a,\Omega$ and $\max_{\partial\Omega} u_0$,  and $C>0$ depending only on $n,p,a$ and $\Omega$, such that
\begin{itemize}
\item[(i).] If $\lambda_1 (p-1) <0$, then
\[
\left(t+s_1\right)^\frac{1}{p-1}\varphi(x)\le u(x,t)\le (t+s_2)^\frac{1}{p-1}\varphi(x)\quad\mbox{on }\overline\Omega\times[0,T],
\]
where $\varphi$ is the unique positive solution of \eqref{eq:steady}.
\item[(ii).] If $\lambda_1 =0$, then
\[
\frac{s_1}{C}\le u(x,t)\le C s_2\quad\mbox{on }\overline\Omega\times[0,T].
\]
\item[(iii).] If $\lambda_1 >0$ and $p>1$, then
\begin{align*}
u(x,t)&\ge \frac{1}{C}(s_1-t)^\frac{1}{p-1}\quad\mbox{on }\overline\Omega\times[0,\min(T,s_1)],\\
u(x,t)&\le C(s_2-t)^\frac{1}{p-1}\quad\mbox{on }\overline\Omega\times[0,T].
\end{align*}
\item[(iv).] If $\lambda_1 <0$ and $0<p<1$, then
\begin{align*}
u(x,t)&\ge \frac{1}{C}(s_1-t)^\frac{1}{p-1}\quad\mbox{on }\overline\Omega\times[0,T],\\
u(x,t)&\le C(s_2-t)^\frac{1}{p-1}\quad\mbox{on }\overline\Omega\times[0,\min(T,s_2)].
\end{align*}
\end{itemize}
\end{prop}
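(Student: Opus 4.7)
The plan is to bracket $u$ between two exact separable (or, in the case $\lambda_1=0$, stationary) solutions of \eqref{1} and invoke the comparison principle (Proposition \ref{prop:comparison}). Let $\varphi$ denote the positive smooth solution of \eqref{eq:steady} produced by Proposition \ref{prop:existencesteady} (in case (i) this $\varphi$ is moreover the unique such solution by Proposition \ref{prop:unique}). Using the boundary identity $\partial_\nu\varphi+a\varphi=\mathrm{sgn}(\lambda_1)\frac{p}{|p-1|}\varphi^p$, a direct computation shows that for each $c>0$,
\[
V_c^+(x,t):=\varphi(x)(c+t)^{1/(p-1)}\qquad\text{and}\qquad V_c^-(x,t):=\varphi(x)(c-t)^{1/(p-1)}
\]
are exact solutions of \eqref{1}, where the $+$ sign is the one compatible with case (i) and the $-$ sign with cases (iii)--(iv); cases (i) and (iv) use the same $+/-$ profile $b_c(t)$ dictated by \eqref{eq:seperatet}. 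In case (ii), $\mathscr{B}\varphi+a\varphi=0$, so every positive multiple $c^{1/(p-1)}\varphi$ is a time-independent solution.

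\textbf{Parameter selection and comparison.} Since $\varphi\in C^\infty(\overline\Omega)$ is strictly positive, there exist $m,M>0$ depending only on $n,p,a,\Omega$ with $m\le \varphi\le M$ on $\overline\Omega$. In each case I choose $s_1$ so that the sub-barrier at $t=0$ satisfies $\varphi\cdot b_{s_1}(0)\le u_0$ on $\partial\Omega$, and $s_2$ so that the super-barrier at $t=0$ satisfies $\varphi\cdot b_{s_2}(0)\ge u_0$ on $\partial\Omega$; the inequality $0<m\le\varphi\le M$ means that $s_1$ is determined by $\min_{\partial\Omega}u_0$ and $s_2$ by $\max_{\partial\Omega}u_0$ (and by the sign of $\tfrac{1}{p-1}$, which decides whether $s\mapsto s^{1/(p-1)}$ is increasing or decreasing). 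Applying Proposition \ref{prop:comparison} with $b_j\equiv 0$ and $c\equiv 1$ on the common interval of existence of $u$ and the barrier yields the pointwise sandwich $\varphi\, b_{s_1}\le u\le \varphi\, b_{s_2}$. Absorbing $M$ and $1/m$ into a single constant $C=C(n,p,a,\Omega)$ produces precisely the inequalities stated in parts (ii), (iii) and (iv); in part (i) one retains $\varphi$ on both sides rather than replacing it by constants.

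\textbf{Main obstacle.} The proof requires no new analytical input beyond the comparison principle and the existence/uniqueness results of Section \ref{sec:Preliminaries}; the real work is bookkeeping signs and intervals of validity. The two points that deserve attention are as follows. In case (iii) ($\lambda_1>0$, $p>1$) the sub-barrier $\varphi(s_1-t)^{1/(p-1)}$ vanishes at $t=s_1$, so the lower bound is only meaningful on $[0,\min(T,s_1)]$, which matches the statement. In case (iv) ($\lambda_1<0$, $0<p<1$) the sub-barrier \emph{blows up} at $t=s_1$, so comparison a posteriori forces $T\le s_1$, and hence the lower bound automatically holds on the full interval $[0,T]$; the super-barrier conversely blows up at $t=s_2$, restricting the upper bound to $[0,\min(T,s_2)]$. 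These are the only subtleties — once the barriers are written down, smoothness and positivity of $u$ on $[0,T]$ (guaranteed by the short-time existence Theorem \ref{thm:short}) is enough to apply Proposition \ref{prop:comparison} directly.
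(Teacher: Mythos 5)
Your proof is correct and follows essentially the same route as the paper: construct the separable barriers $\varphi(x)b_c(t)$ from \eqref{eq:steady} and \eqref{eq:seperatet}, choose $c=s_1$ and $c=s_2$ to match $\min u_0$ and $\max u_0$ at $t=0$, and apply Proposition \ref{prop:comparison}. The additional bookkeeping you give about the intervals of validity in cases (iii) and (iv) is consistent with what is being asserted; no gap.
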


\begin{proof}
Let $\varphi$ be a positive solution of \eqref{eq:steady} and $b_c(t)$ be defined in \eqref{eq:seperatet}. If $\lda_1(p-1)<0$, then we know from Proposition \ref{prop:unique} that $\varphi$ is unique. The function $b_c(t)\varphi(x)$ satisfies the equation \eqref{1}. Then the conclusion follows from the comparison principle in Corollary \ref{prop:comparison} by choosing proper $c$ such that either $b_c(0)\varphi(x)\le u_0(x)$ or $b_c(0)\varphi(x)\ge u_0(x)$.
\end{proof}

Next, we show that the equation \eqref{1} has infinite speed of propagation for all $p$ satisfying \eqref{2}, including $0<p<1$.

\begin{proof}[Proof of Theorem \ref{thm:infinitespeed}]
First, it follows from Proposition \ref{cor:boundednessbefore} that there exist $T_0>0$ and $C_0>0$, both of which depend only on $n,p,a,\Omega$ and $\max_{\partial\Omega} u_0$, such that
\begin{equation}\label{eq:infiniteupper}
u\le C_0\quad\mbox{on }(x,t)\in\overline\Omega\times[0,T_0].
\end{equation}

Secondly, we would like to show in the below that there exist $t_0\in (0,T_0]$ and $C_1>0$, both of which depend only on $n,p,a,\Omega$, $\max_{\partial\Omega} u_0$ and $\|u_0\|_{L^p(\partial\Omega)}$, such that
\begin{equation}\label{eq:Lplower}
\int_{\partial\Omega}u^p(\cdot,t)\,\ud S\ge C_1\quad\mbox{for all }t\in [0,t_0].
\end{equation}

The proof of \eqref{eq:Lplower} will be split into several cases. To start with, let $\phi_1$ be the positive eigenfunction associated to $\lda_1$ defined in \eqref{eq:lambda1} such that $\|\phi_1\|_{L^2(\partial\Omega)}=1$. Then by multiplying $\phi_1$ to \eqref{1} and integrating over $\partial\Omega$, we obtain
\begin{equation}\label{eq:Lploweragainst}
\frac{\ud}{\ud t}\int_{\partial\Omega}u^p(\cdot,t)\phi_1\,\ud S=-\lda_1\int_{\partial\Omega}u(\cdot,t)\phi_1\,\ud S.
\end{equation}

Case 1: $\lambda_1\le 0$. Then it follows from \eqref{eq:Lploweragainst} that 
\[
\frac{\ud}{\ud t}\int_{\partial\Omega}u^p(\cdot,t)\phi_1\,\ud S\ge 0,
\]
and thus,
\[
\int_{\partial\Omega}u^p(\cdot,t)\phi_1\,\ud S\ge \int_{\partial\Omega}u^p_0\phi_1\,\ud S\quad\forall\ t\in [0,T_0].
\]
Since $\phi_1$ is bounded from below and above by two positive constants that depend only on $n,p,a$ and $\Omega$, this proves \eqref{eq:Lplower}.

Case 2: $\lambda_1>0$ and $0<p\le 1$. Then from \eqref{eq:infiniteupper} and \eqref{eq:Lploweragainst}, we have
\[
\frac{\ud}{\ud t}\int_{\partial\Omega}u^p(\cdot,t)\phi_1\,\ud S=-\lda_1\int_{\partial\Omega}u(\cdot,t)\phi_1\,\ud S\ge -\lda_1 C_0^{1-p}\int_{\partial\Omega}u^p(\cdot,t)\phi_1\,\ud S.
\]
Solving this differential inequality, we obtain
\[
\int_{\partial\Omega}u^p(\cdot,t)\phi_1\,\ud S \ge e^{-\lda_1 C_0^{1-p} t }\int_{\partial\Omega}u_0^p\phi_1\,\ud S \quad\forall\ t\in [0,T_0].
\]
Hence,
\[
\int_{\partial\Omega}u^p(\cdot,t)\phi_1\,\ud S \ge e^{-\lda_1 C_0^{1-p} T_0}\int_{\partial\Omega}u_0^p\phi_1\,\ud S\quad\forall\ t\in [0,T_0].
\]
Therefore, as in case 1, \eqref{eq:Lplower} follows.

Case 3: $\lambda_1>0$ and $p>1$. Then by using H\"older's inequality,  it follows from \eqref{eq:Lploweragainst} that
\[
\frac{\ud}{\ud t}\int_{\partial\Omega}u^p(\cdot,t)\phi_1\,\ud S=-\lda_1\int_{\partial\Omega}u(\cdot,t)\phi_1\,\ud S\ge -C_2\left(\int_{\partial\Omega}u^p(\cdot,t)\phi_1\,\ud S\right)^{\frac 1p},
\]
where $C_2>0$ depends only on $n,p,a$ and $\Omega$. Since we know from \eqref{eq:Lploweragainst} that the integral $\int_{\partial\Omega}u^p(\cdot,t)\phi_1\,\ud S$ is decreasing in $t$, we obtain
\[
\frac{\ud}{\ud t}\int_{\partial\Omega}u^p(\cdot,t)\phi_1\,\ud S\ge -C_2\left(\int_{\partial\Omega}u^p_0\phi_1\,\ud S\right)^{\frac 1p}.
\]
Integrating in the time variable, we obtain 
\[
\int_{\partial\Omega}u^p(\cdot,t)\phi_1\,\ud S\ge \int_{\partial\Omega}u^p_0\phi_1\,\ud S -t C_2\left(\int_{\partial\Omega}u^p_0\phi_1\,\ud S\right)^{\frac 1p}.
\]
Hence, one proves \eqref{eq:Lplower} by choosing a proper $t_0$.

Finally, we claim that for a sufficiently small $\va_0$, which will be fixed in the end, there holds 
\[
u(x,t)> \va_0 t^{1/p}\quad\mbox{for all }(x,t)\in\overline\Omega\times[0,t_0].
\]
Suppose not, then let $t_1<t_0$ be the first time that $u$ touches the function $\va_0 t^{1/p}$ at some point $x_1\in\partial\Omega$. Since $u_0$ is positive, we have $t_1>0$. Then
\[
\pa_t (u^p) (x_1,t_1)\le \va_0^p\quad\mbox{and}\quad u(x_1,t_1)=\va_0 t_1^{1/p}.
\]
Using the equation \eqref{1}, we then have
\[
-\partial_\nu u(x_1,t_1)\le \va_0^p + \va_0 t_1^{1/p} \|a\|_{L^\infty(\partial\Omega)}  \le \va_0^p +  \va_0 t_0^{1/p} \|a\|_{L^\infty(\partial\Omega)}.
\]
By Lemma \ref{lem:normal derivative lower bound}, then we have
\[
\int_{\partial\Omega}u(\cdot,t_1)\,\ud S\le C_3\va_0^p + C_3 \va_0 t_0^{1/p},
\]
where $C_3>0$ depends only on $n,\Omega$ and $\|a\|_{L^\infty(\partial\Omega)} $. Then when $p\ge 1$, it follows from \eqref{eq:infiniteupper} and \eqref{eq:Lplower} that
\[
C_1\le \int_{\partial\Omega}u^p(\cdot,t_1)\,\ud S \le  C_0^{p-1} \int_{\partial\Omega}u(\cdot,t_1)\,\ud S\le  C_0^{p-1}(C_3\va_0^p + C_3 \va_0 t_0^{1/p}).
\]
This would be impossible if we choose $\va_0$ sufficiently small. When $0<p<1$, then by H\"older's inequality and \eqref{eq:Lplower}, we have 
\[
C_1^{1/p}\le \left(\int_{\partial\Omega}u^p(\cdot,t_1)\,\ud S\right)^{\frac 1p} \le C_4 \int_{\partial\Omega}u(\cdot,t_1)\,\ud S\le C_4(C_3\va_0^p + C_3 \va_0 t_0^{1/p}),
\]
where $C_4>0$ depends only on $n$ and $\Omega$. This would be impossible either, if we choose $\va_0$ sufficiently small.

This theorem is proved.
\end{proof}

\section{Extinction or blow up in finite time, and integral bounds}\label{sec:extinction}

Part (iii) of Proposition \ref{cor:boundednessbefore}  immediately implies that if $\lambda_1 >0$ and $p>1$, then the solution $u$ cannot be positive forever, that is,
\[
\sup\{t>0: u>0\ \  \mbox{on }\ \partial\Omega\times[0,t)\}<+\infty.
\]
Similarly, part (iv) of Proposition \ref{cor:boundednessbefore} implies that if  $\lambda_1 <0$ and $0<p<1$, then the solution $u$ cannot be bounded forever, that is,
\[
\sup\{t>0: \|u\|_{L^\infty(\partial\Omega\times[0,t))}<+\infty\}<+\infty.
\]

\begin{defn}\label{defn:extinction}
Let $u$ be as in Proposition \ref{cor:boundednessbefore}. Suppose $\lda_1 (p-1)>0$. We define
\begin{equation}\label{eq:extinctiontime}
T^*:=
\left\{
  \begin{aligned}
&\sup\{t>0: u>0\ \  \mbox{on }\ \partial\Omega\times[0,t)\}, \quad\mbox{if } \lambda_1 >0\ \mbox{and}\ p>1.   \\
&\sup\{t>0: \|u\|_{L^\infty(\partial\Omega\times[0,t))}<+\infty\}, \quad\mbox{if } \lambda_1 <0\ \mbox{and}\ 0<p<1.
\end{aligned}
\right.
\end{equation}
When $\lambda_1 >0$ and $p>1$, we call $T^*$ as the \emph{extinction time} of $u$. When $\lambda_1 <0$ and $0<p<1$, we call $T^*$ as the \emph{blow-up time} of $u$.
\end{defn}
Consequently, it follows from Proposition \ref{cor:boundednessbefore} that 
\begin{cor}\label{cor:estimateT^*}
Let $T^*$ be as in Definition \ref{defn:extinction}. Then there exist $s_1>0$ small, and $s_2>0$ large, both of which depend only on $n,p,a,\Omega, \max_{\partial\Omega}u_0$ and $\min_{\partial\Omega}u_0$, such that
\[
s_1\le T^*\le s_2.
\]
\end{cor}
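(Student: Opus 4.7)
The plan is to read both inequalities directly off of Proposition \ref{cor:boundednessbefore}, applied on intervals $[0, T]$ with $T < T^*$, and then to let $T \uparrow T^*$. The key observation is that the quantitative bounds of that proposition carry \emph{a priori} temporal thresholds (namely $s_1$ and $s_2$) beyond which the separable barriers would become nonpositive in the extinction regime, or infinite in the blow-up regime; these thresholds coincide with the bounds on $T^*$ asserted by the corollary.

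For the extinction case ($\lambda_1 > 0$ and $p > 1$), fix any $T \in (0, T^*)$. Then $u$ is a smooth positive solution on $\overline\Omega \times [0,T]$, so part (iii) of Proposition \ref{cor:boundednessbefore} applies. The upper bound $u(x,t) \le C(s_2 - t)^{1/(p-1)}$ holding throughout $[0,T]$ forces the right-hand side to stay positive, whence $T < s_2$; letting $T \uparrow T^*$ gives $T^* \le s_2$. For the lower inequality, part (iii) yields $u(x,t) \ge C^{-1}(s_1-t)^{1/(p-1)}$ on $[0,\min(T,s_1)]$. If one had $T^* < s_1$, then combining this with the upper barrier shows $u$ would be trapped between positive constants on all of $[0, T^*)$, so by smoothness of $u$ and the short-time existence theorem (Theorem \ref{thm:short}) applied with initial datum $u(\cdot, T^*)$, the solution would extend past $T^*$ as a positive smooth solution, contradicting the maximality of $T^*$. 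Hence $T^* \ge s_1$.

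For the blow-up case ($\lambda_1 < 0$ and $0 < p < 1$), the exponent $1/(p-1)$ is negative, which inverts the roles of the two thresholds. Applying Proposition \ref{cor:boundednessbefore}(iv) on $[0,T]$ for $T < T^*$, the lower bound $u \ge C^{-1}(s_1-t)^{1/(p-1)}$ blows up as $t \to s_1^-$, which would contradict $\|u\|_{L^\infty(\partial\Omega \times [0,T])} < \infty$ if $T \ge s_1$; thus $T < s_1$ and so $T^* \le s_1$. The upper bound $u \le C(s_2-t)^{1/(p-1)}$ on $[0,\min(T,s_2)]$ gives a uniform $L^\infty$ control on $[0,\min(T^*,s_2))$, so if $T^* < s_2$ then $u$ remains uniformly bounded and (by the lower bound) uniformly positive on $[0, T^*)$, again permitting extension past $T^*$ via Theorem \ref{thm:short} and contradicting maximality. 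Hence $T^* \ge s_2$. Relabeling $s_1$ and $s_2$ in this case gives the claimed double inequality.

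The argument is short and the only mildly delicate point is closing off the pathological scenario in which the solution is trapped between positive finite barriers yet $T^*$ is finite; this is handled uniformly in both cases by invoking the smooth short-time existence of Theorem \ref{thm:short} at time $T^*$, ensuring that a positive smooth solution at $T^*$ extends to a strictly larger interval.
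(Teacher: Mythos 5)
Your proposal is correct and takes essentially the same route as the paper, which dismisses the corollary in a single sentence as a consequence of Proposition~\ref{cor:boundednessbefore}. You have spelled out the bookkeeping in both regimes correctly, including the sign flip of $1/(p-1)$ that swaps the roles of the two barriers when $0<p<1$. The one step that deserves a word of care is the continuation argument needed for the ``easy'' direction: when you infer that the solution ``would extend past $T^*$'' after observing that $u$ is trapped between positive constants on $[0,T^*)$, you invoke ``smoothness of $u$'' and Theorem~\ref{thm:short} with datum $u(\cdot,T^*)$, but a priori the solution is only smooth on the half-open interval $[0,T^*)$. To make $u(\cdot,T^*)$ a legitimate smooth positive datum you must first note that the uniform two-sided bounds, fed through the a priori estimates of Lemma~\ref{L2} (bootstrapped, as in Theorem~\ref{thm:higherestimate}), yield uniform $C^k$ control on $[\delta,T^*)$, so that $u$ extends smoothly to the closed interval. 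With that one line inserted, the contradiction via Theorem~\ref{thm:short} is airtight. The opposite direction (that $T^*$ cannot exceed the barrier's vanishing or blow-up time) needs no continuation and is handled cleanly.
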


The reason why we call this supremum $T^*$ as the extinction time of the solution when $\lambda_1 >0$ and $p>1$, is that if $u$ vanishes at one point at time $T$, then $u(\cdot,T)\equiv 0$, by using the following elliptic type weak Harnack inequality for the problem  \eqref{1}--\eqref{eq:initial}. This idea of proving such a lower bound was used earlier in Jin-Xiong \cite{JX20-2}.

\begin{prop}\label{prop:lower1}
Let $u$ be a smooth positive solution of  \eqref{1}--\eqref{eq:initial} on $\overline\Omega\times[0,T)$ with $p>1$ and $T>0$. Suppose $u\le M$ on $\overline\Omega\times[0,T)$. There exists $C>0$ depending only on $n,p,a,\Omega, M, \min_{\partial\Omega}u_0$ and $\|u_0\|_{C^{1,\alpha}(\partial\Omega)}$ (with any $\alpha>0$ fixed) such that
$$
\inf_{x\in\partial\Omega}u(x,t)\ge \frac1C  \int_{\partial\Omega} u^{p+1}(\cdot,t)  \,\ud S \quad   \forall \   0<t<T.
$$
\end{prop}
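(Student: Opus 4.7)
The plan is to apply the elliptic weak Harnack inequality (Lemma \ref{lem:weakharnack}) at each time slice. This requires a uniform bound of the form $\partial_\nu u(\cdot, t) \ge -C u(\cdot, t)$ on $\partial\Omega \times [0, T)$, which via the boundary equation $\partial_\nu u = -\partial_t u^p - au$ reduces to an upper bound $\partial_t u \le A u$ for some constant $A$ depending on the quantities in the statement.

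The central idea is a hidden linear structure in the equation. Setting $\psi := \partial_t u$, we have $\Delta \psi = 0$ in $\Omega$ because $u$ is harmonic in $x$ for every $t$. Differentiating the boundary equation in $t$ yields
\[
\mathcal{L}(\psi) := pu^{p-1}\partial_t \psi + \partial_\nu \psi + a\psi = -p(p-1)u^{p-2}\psi^2 \le 0 \quad \text{on } \partial\Omega,
\]
where the sign is decisive because $p>1$. Crucially, rewriting $\partial_t u^p + \partial_\nu u + au = 0$ as $\mathcal{L}(u) = 0$ shows that $u$ itself is a solution of the \emph{linear} operator $\mathcal{L}$. Therefore, for any constant $A$, the function $W := \psi - Au$ satisfies $\Delta W = 0$ in $\Omega$ and $\mathcal{L}(W) = \mathcal{L}(\psi) - A\mathcal{L}(u) = -p(p-1)u^{p-2}\psi^2 \le 0$ on $\partial\Omega$.

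We next fix $A$ so that $W(\cdot, 0) \le 0$ on $\overline\Omega$. On $\partial\Omega$, $\psi(\cdot, 0) = (-\partial_\nu u_0 - au_0)/(pu_0^{p-1})$ is bounded by a constant depending on $\|u_0\|_{C^{1,\alpha}(\partial\Omega)}$ (through Schauder estimates for the harmonic extension of $u_0$), on $\min_{\partial\Omega} u_0$, on $\|a\|_{L^\infty(\partial\Omega)}$, and on $p$. Since $\psi(\cdot, 0)$ extends harmonically into $\Omega$ (and is thus controlled in $\overline\Omega$ by its boundary maximum) and $u_0 \ge \min_{\partial\Omega} u_0$ in $\overline\Omega$ by the minimum principle, one may take $A$ to be an explicit constant of the desired dependencies.

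A linearized maximum principle, obtained by working on $\overline\Omega \times [0, s]$ for each $s<T$ and multiplying $W$ by $e^{-\beta t}$ with $\beta$ chosen so that $a + \beta p u^{p-1} > 0$ throughout (which is possible because $u$ is bounded below by a positive constant on that compact set), exactly as in the proof of Proposition \ref{prop:comparison}, then yields $W \le 0$ on $\overline\Omega \times [0, T)$, i.e.\ $\partial_t u \le Au$. Consequently, on $\partial\Omega$, $\partial_t u^p = pu^{p-1}\partial_t u \le pAM^{p-1}u$, and hence $\partial_\nu u \ge -(pAM^{p-1} + \|a\|_{L^\infty(\partial\Omega)})u$. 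Applying Lemma \ref{lem:weakharnack} on each time slice gives $\inf_{\partial\Omega} u(\cdot, t) \ge C_1^{-1}\|u(\cdot, t)\|_{L^1(\partial\Omega)}$, and finally $\|u(\cdot,t)\|_{L^1(\partial\Omega)} \ge M^{-p}\int_{\partial\Omega} u^{p+1}(\cdot,t)\,\ud S$ (since $u \le M$) closes the proof with $C = C_1 M^p$. The main obstacle is recognizing the identity $\mathcal{L}(u) = 0$ together with the favorable sign of $\mathcal{L}(\psi)$, which together convert a nonlinear time-evolution bound into an elliptic-type comparison argument.
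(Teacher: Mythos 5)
Your proof is correct and implements the same strategy as the paper: obtain the a priori bound $\partial_t u \le A\,u$ from the sign-favorable evolution of a time-derivative quantity (using $p>1$), then feed the resulting inequality $\partial_\nu u \ge g\,u$ into Lemma~\ref{lem:weakharnack} on each time slice and finish with $u\le M$. The paper's change of variable is $H=-p\,\partial_t u/u$, for which $\Delta(uH)=0$ and $\partial_t H \ge -\tfrac{1}{p}u^{1-p}\partial_\nu H$, compared against a constant via the parabolic comparison principle; your comparison of $\psi=\partial_t u$ against $Au$ through the identity $\mathcal{L}(u)=0$ is an equivalent reformulation (since $H\equiv c$ corresponds exactly to $\psi\equiv -cu/p$), with the minor advantage that $W=\psi-Au$ remains harmonic in $\Omega$, so the interior part of the maximum principle involves no drift term.
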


\begin{proof}
Define 
\[
H=-p \frac{\partial_t u}{u}.
\]
Then
\[
\Delta(uH)=0\quad\mbox{in }\Omega
\]
and
$$
H=u^{-p}(\partial_\nu u+au)\quad\mbox{on}\quad \partial \Omega.
$$  
Then
\begingroup
\allowdisplaybreaks
\begin{align}
\partial_t H&  =   -pu^{-p-1}(\partial_t u) (\partial_\nu u+au) +u^{-p}(\partial_\nu  +a)\partial_t u   \nonumber \\
                  & =   -u^{-2p}(\partial_t u^{p}) (\partial_\nu u+au)   + \frac{1}{p}u^{-p}(\partial_\nu +a)(u^{-p+1}\partial_t u^{p}) \nonumber\\
                  & =  u^{-2p}(\partial_\nu u+au)^2 - \frac{1}{p}u^{-p}(\partial_\nu+a)(u^{-p+1}  (\partial_\nu+a)u)\nonumber\\
                  & = H^{2} -\frac{1}{p}u^{-p} (\partial_\nu+a) (uH) \nonumber\\
                  & = -\frac{1}{p}u^{-p}\big(  H \partial_\nu u +u \partial_\nu H+ auH \big) +H^{2} \nonumber\\
                  & =-\frac{1}{p}u^{-p+1} \partial_\nu H + \Big(1-\frac{1}{p}\Big) H^{2}\label{eq:auxH}\\
                  & \geq -\frac{1}{p}u^{-p+1} \partial_\nu H.  \nonumber
\end{align}
\endgroup
So we have the linear elliptic equation for $H$:
\begin{equation*}
  \left\{
    \begin{array}{ll}
        \Delta H + 2 u^{-1} \nabla u \cdot \nabla  H = 0                                                                                    \quad  \mbox{in }   \Omega \times(0, T ),\\
\partial_t H  \geq   -\frac{1}{p}u^{-p+1} \partial_\nu H  \quad \mbox{on }  \partial \Omega \times(0, T ).
         \end{array}
  \right.
\end{equation*}
By Proposition \ref{prop:comparison}, we have 
\[
H\geq c_{1}:= \min \{\min_{\partial \Omega}H(\cdot,0),0  \}\quad\mbox{on }\overline\Omega\times[0,T).
\] 
That is
$$
\partial_\nu u + au \geq c_{1}u^{p} \quad \mbox{on }  \partial \Omega\quad\mbox{for all } t\in(0, T ).
$$
So we have for all $t\in [0,T)$ that
\[
  \left\{
    \begin{array}{ll}
        \Delta u  = 0          \quad  \mbox{in }   \Omega,  \\
         \partial_\nu u  \geq  g(x,t) u       \quad \mbox{on }  \partial \Omega 
         \end{array}
  \right.
\]
with $ g(x,t)= c_{1}u^{p-1}-a $ and $|g(x,t)| \leq |c_1|M^{p-1}+|a| $ on $\partial\Omega$. By Lemma \ref{lem:weakharnack}, we have
$$
\inf_{x\in\partial\Omega} u(x,t)\geq  \frac{1}{C}  \int_{\partial\Omega}   u(\cdot,t) \,\ud S\quad\mbox{for all } t\in(0, T ).
$$
Since $u\le M$ on $\overline\Omega\times[0,T)$, we have $\int_{\partial\Omega}   u(\cdot,t) \,\ud S\ge M^{-p}\int_{\partial\Omega}   u^{p+1}(\cdot,t) \,\ud S$, from which the conclusion follows.
\end{proof}

Similarly, if $0<p<1$, and  $\|u(\cdot,t)\|_{L^\infty(\partial\Omega)}$ blows up at time $T^*$, then $\|u(\cdot,t)\|_{L^{p+1}(\partial\Omega)}$ will also blow up at time $T^*$, following from the local maximum principle in Lemma \ref{lem:localmax}.

\begin{prop}\label{prop:upper1}
Let $u$ be a smooth positive solution of  \eqref{1}--\eqref{eq:initial} on $\overline\Omega\times[0,T)$ with $0<p<1$ and $T>0$. Suppose $u\ge m$ on $\overline\Omega\times[0,T)$ for some $m>0$. There exists $C>0$ depending only on $n,p,a,\Omega, m, \max_{\pa\Omega}u_0$ and $\|u_0\|_{C^{1,\alpha}(\partial\Omega)}$ (with any $\alpha>0$ fixed) such that
$$
\sup_{x\in\partial\Omega}u(x,t)\le  C  \int_{\partial\Omega} u^{p+1}(\cdot,t)  \,\ud S \quad   \forall\,  0<t<T.
$$
\end{prop}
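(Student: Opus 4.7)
My plan is to mirror the argument of Proposition~\ref{prop:lower1}, exploiting the fact that the sign of the coefficient $1 - 1/p$ flips when $0<p<1$. Let
\[
H := -p\,\frac{\partial_t u}{u}.
\]
The computations leading to \eqref{eq:auxH} apply verbatim: $\Delta u=0$ gives $\Delta(uH)=0$ in $\Omega$, equivalently $\Delta H + 2u^{-1}\nabla u\cdot\nabla H=0$; the boundary equation in \eqref{1} rewrites as $H=u^{-p}(\partial_\nu u+au)$ on $\partial\Omega$; and direct differentiation yields
\[
\partial_t H = -\tfrac{1}{p}\,u^{-p+1}\,\partial_\nu H + \Bigl(1-\tfrac{1}{p}\Bigr)H^2 \quad\text{on }\partial\Omega\times(0,T).
\]
Since $0<p<1$, the coefficient $1-1/p$ is now \emph{negative}, hence $\partial_t H \le -\tfrac{1}{p}u^{-p+1}\partial_\nu H$, which is the reverse of the inequality used in the $p>1$ case.

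Next I would compare $H$ from above against the constant
\[
c_2 := \max\bigl\{\max_{\overline\Omega} H(\cdot,0),\ 0\bigr\}
\]
via the comparison principle (Proposition~\ref{prop:comparison}, with interior drift $b_j=-2u^{-1}\partial_{x_j}u$). The constant $c_2$ is controlled by $m$, $\|a\|_{L^\infty(\partial\Omega)}$, $\max_{\partial\Omega}u_0$, and $\|u_0\|_{C^{1,\alpha}(\partial\Omega)}$: since $H(\cdot,0)$ solves a linear elliptic equation with no zeroth-order term, $\max_{\overline\Omega}H(\cdot,0)=\max_{\partial\Omega}H(\cdot,0)$ by the maximum principle, and on $\partial\Omega$ the identity $H(\cdot,0)=u_0^{-p}(\partial_\nu u(\cdot,0)+au_0)$ combined with $u_0\ge m$ and the Schauder bound $\|\partial_\nu u(\cdot,0)\|_{L^\infty(\partial\Omega)}\le C\|u_0\|_{C^{1,\alpha}(\partial\Omega)}$ for the harmonic extension does the job. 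Comparison then yields $H\le c_2$ on $\overline\Omega\times[0,T)$, which translates into
\[
\partial_\nu u \le (c_2\,u^{p-1}-a)\,u \quad\text{on }\partial\Omega\times[0,T).
\]

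Since $u\ge m$ and $p-1<0$, the coefficient $g:=c_2 u^{p-1}-a$ satisfies $g^+\in L^\infty(\partial\Omega)$ with a uniform bound. Lemma~\ref{lem:localmax} then gives $\sup_{\partial\Omega}u(\cdot,t)\le C\|u(\cdot,t)\|_{L^1(\partial\Omega)}$, and the pointwise inequality $u\le m^{-p}u^{p+1}$ on $\partial\Omega$ upgrades this $L^1$ bound to the required $L^{p+1}$ bound. The argument is essentially routine given Proposition~\ref{prop:lower1}; the only genuine point of care is checking that the flipped sign of $1-1/p$ makes $H$ a subsolution from above, so that comparison naturally produces an upper bound and the local maximum principle takes the place of the weak Harnack inequality used in the $p>1$ case.
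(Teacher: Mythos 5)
Your proposal is correct and follows the same route as the paper: introduce $H=-p\,\partial_t u/u$, observe that for $0<p<1$ the sign of $1-1/p$ flips so that $H$ satisfies $\partial_t H\le -\tfrac1p u^{1-p}\partial_\nu H$, compare from above to get $H\le c_2$, translate that into $\partial_\nu u\le g\,u$ with $g^+$ bounded, and invoke the local maximum principle (Lemma~\ref{lem:localmax}) followed by $u\le m^{-p}u^{p+1}$. Your extra remarks on why $c_2$ is controlled by the stated quantities are fine and simply make explicit what the paper leaves implicit.
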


\begin{proof}
Let $H$ be the one defined in Proposition \ref{prop:lower1}. Then it follows from \eqref{eq:auxH} that 
\begin{equation*}
  \left\{
    \begin{array}{ll}
        \Delta H + 2 u^{-1} \nabla u \cdot \nabla  H = 0                                                                                    \quad  \mbox{in }   \Omega \times(0, T ),\\
\partial_t H  \leq   -\frac{1}{p}u^{-p+1} \partial_\nu H  \quad \mbox{on }  \partial \Omega \times(0, T ).
         \end{array}
  \right.
\end{equation*}
By Proposition \ref{prop:comparison}, we have 
\[
H\leq c_{2}:= \max \{\max_{\partial \Omega}H(\cdot,0),0  \}\quad\mbox{on }\overline\Omega\times[0,T).
\] 
That is
$$
\partial_\nu u + au \leq c_{2}u^{p} \quad \mbox{on }  \partial \Omega\quad\mbox{for all } t\in(0, T ).
$$
So we have for all $t\in [0,T)$ that
\[
  \left\{
    \begin{array}{ll}
        \Delta u  = 0          \quad  \mbox{in }   \Omega,  \\
         \partial_\nu u  \leq  g(x,t) u       \quad \mbox{on }  \partial \Omega 
         \end{array}
  \right.
\]
with $ g(x,t)= c_{1}u^{p-1}-a $ and $|g(x,t)| \leq |c_1|m^{p-1}+|a| $ on $\partial\Omega$. By Lemma \ref{lem:localmax}, we have
$$
\sup_{x\in\partial\Omega}u(x,t)\leq  C \int_{\partial\Omega}   u(\cdot,t) \,\ud S \quad\mbox{for all } t\in(0, T ).
$$
Since $u\ge m$ on $\overline\Omega\times[0,T)$, we have $\int_{\partial\Omega}   u(\cdot,t) \,\ud S\le m^{-p}\int_{\partial\Omega}   u^{p+1}(\cdot,t) \,\ud S$, from which the conclusion follows.
\end{proof}

Consequently, we have

\begin{prop}\label{prop:integralT^*}
Let $n\ge 2$, $ \Omega \subset \R^n$   be a bounded smooth domain, $p$ satisfy \eqref{2}, $a\in C^\infty(\pa \om)$ and $u_0 \in C^\infty(\pa \om) $ be a  positive function. Suppose $\lambda_1(p-1)>0$, and $u$ is the smooth positive solution of  \eqref{1}--\eqref{eq:initial} on $\overline\Omega\times[0,T^*)$, where $T^*<\infty$ is the one defined in \eqref{eq:extinctiontime}.  
\begin{itemize}
\item[(i).] If $\lambda_1>0$ and $p>1$, then
\begin{equation}\label{eq:extinctionLp}
\liminf_{t\to (T^*)^-} \|u(\cdot,t)\|_{L^{p+1}(\partial\Omega)}=0.
\end{equation}

\item[(ii).] If $\lambda_1<0$ and $0<p<1$, then
\begin{equation}\label{eq:blowupLp}
\limsup_{t\to (T^*)^-} \|u(\cdot,t)\|_{L^{p+1}(\partial\Omega)}=+\infty.
\end{equation}

\end{itemize}
\end{prop}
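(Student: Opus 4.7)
The plan is to combine the elliptic-type Harnack inequalities of Propositions \ref{prop:lower1} and \ref{prop:upper1} with the maximal-existence definition of $T^*$ in \eqref{eq:extinctiontime}: at $T^*$, either the infimum of $u$ must collapse to zero (case (i)) or its supremum must diverge (case (ii)), for otherwise the solution could be continued past $T^*$ via the short-time existence theorem, contradicting the very definition of $T^*$. The Harnack inequalities then act as pointwise-to-$L^{p+1}$ converters that immediately translate the collapse/divergence into the claimed $L^{p+1}$ statements.

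For part (i), I first use part (iii) of Proposition \ref{cor:boundednessbefore} to obtain a uniform upper bound $u\le M$ on $\overline\Omega\times[0,T^*)$, with $M$ depending only on the listed parameters. This is exactly the hypothesis needed to apply Proposition \ref{prop:lower1} on the whole interval $[0,T^*)$, giving
\[
\int_{\partial\Omega} u^{p+1}(\cdot,t)\,\ud S\le C\inf_{\partial\Omega} u(\cdot,t)\qquad\forall\,t\in(0,T^*),
\]
with $C$ uniform in $t$. It therefore suffices to exhibit a sequence $t_k\to T^{*-}$ with $\inf_{\partial\Omega}u(\cdot,t_k)\to 0$. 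Suppose for contradiction that $\inf_{\partial\Omega}u(\cdot,t)\ge\delta>0$ uniformly on $[0,T^*)$. Then $u$ lies in a compact subinterval of $(0,\infty)$ on $\overline\Omega\times[0,T^*)$, and the linear a priori estimates of Lemma \ref{L2}, bootstrapped, yield uniform higher-order Sobolev bounds for $u$ up to $T^*$. Taking a limit produces a smooth positive $u(\cdot,T^*)$, and Theorem \ref{thm:short} applied with datum $u(\cdot,T^*)$ then extends $u$ smoothly and positively to $\overline\Omega\times[0,T^*+\eta)$ for some $\eta>0$, contradicting \eqref{eq:extinctiontime}. Along the produced sequence $t_k$, the Harnack bound above gives $\int_{\partial\Omega}u^{p+1}(\cdot,t_k)\,\ud S\to 0$, which is \eqref{eq:extinctionLp}.

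Part (ii) is completely parallel with the roles of upper and lower bounds swapped. Part (iv) of Proposition \ref{cor:boundednessbefore} gives a uniform lower bound $u\ge m>0$ on $\overline\Omega\times[0,T^*)$, so Proposition \ref{prop:upper1} applies and yields
\[
\sup_{\partial\Omega} u(\cdot,t)\le C\int_{\partial\Omega} u^{p+1}(\cdot,t)\,\ud S\qquad\forall\,t\in(0,T^*).
\]
If the supremum were uniformly bounded on $[0,T^*)$, then combined with the lower bound $m$ the same bootstrap-plus-short-time-existence argument would extend $u$ past $T^*$, contradicting the definition of the blow-up time. Hence there exists $t_k\to T^{*-}$ with $\sup_{\partial\Omega}u(\cdot,t_k)\to\infty$, and the above Harnack bound forces $\int_{\partial\Omega}u^{p+1}(\cdot,t_k)\,\ud S\to\infty$, proving \eqref{eq:blowupLp}.

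The main obstacle is the continuation argument: one must verify that uniform two-sided positive bounds $m\le u\le M$ on $\overline\Omega\times[0,T^*)$, together with the $C^\infty$ regularity of $u_0$, promote to uniform higher Sobolev regularity of $u(\cdot,t)$ on $[0,T^*)$ so that Theorem \ref{thm:short} can be iterated with a uniform time-step. This step is standard in spirit but somewhat delicate here, since it relies on Lemma \ref{L2} applied to the linearization of \eqref{1}, whose coefficient $u^{-(p-1)}$ must be trapped in a fixed compact subinterval of $(0,\infty)$ uniformly up to $T^*$; this is precisely what the two-sided bounds $m\le u\le M$ guarantee. Everything else in the proof is direct substitution into the elliptic Harnack estimates from the previous section.
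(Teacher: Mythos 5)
Your proof is correct and follows essentially the same route as the paper, whose proof is the one-liner ``It follows from Proposition \ref{prop:lower1}, Proposition \ref{prop:upper1}, and the definition of $T^*$.'' You have accurately unpacked what that invocation of ``the definition of $T^*$'' requires: the one-sided uniform bound from Proposition \ref{cor:boundednessbefore}(iii)/(iv), the relevant Harnack inequality converting the other pointwise bound to the $L^{p+1}$ statement, and the continuation argument (two-sided bounds together with Lemma \ref{L2} bootstrapping and Theorem \ref{thm:short}) showing that the complementary pointwise quantity must degenerate along a sequence $t_k\to T^*$.
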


\begin{proof}
It follows from Proposition \ref{prop:lower1}, Proposition \ref{prop:upper1}, and the definition of $T^*$. \end{proof}

Now we can derive the precise decay rate of $\|u(\cdot,t)\|_{L^{p+1}(\partial\Omega)}$ near the extinction time $T^*$, or the precise blow up rate of $\|u(\cdot,t)\|_{L^{p+1}(\partial\Omega)}$ near the blow up time $T^*$.

\begin{prop}\label{prop:integralbound}
Assume all the assumptions in Proposition \ref{prop:integralT^*}.
Then there exist $C>0$ and $c>0$, both of which depend only on $n,p,a,\Omega, \max_{\partial\Omega}u_0$ and $\min_{\partial\Omega}u_0$, such that
\[
c(T^{\ast} -t)^{\frac{1}{p-1}}\le \|u(\cdot,t)\|_{L^{p+1}(\partial \Omega)}\le C (T^{\ast} -t)^{\frac{1}{p-1}}.
\]
\end{prop}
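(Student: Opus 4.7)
The plan is to adapt the Berryman--Holland energy method. Introduce
\[
E(t) := \int_{\Omega} |\nabla u|^2 \,\ud x + \int_{\partial\Omega} a u^2 \,\ud S, \qquad J(t) := \int_{\partial\Omega} u^{p+1} \,\ud S.
\]
Multiplying \eqref{1} by $u$ and by $u_t$ respectively and integrating by parts (using $\Delta u = 0$ in $\Omega$), I obtain the two basic identities
\[
J'(t) = -\tfrac{p+1}{p} E(t), \qquad E'(t) = -\tfrac{2}{p}\int_{\partial\Omega} u^{1-p}(\partial_t u^p)^2 \,\ud S \le 0.
\]
Applying Cauchy--Schwarz to $J'(t) = (p+1)\int u^{(p+1)/2}\cdot u^{(p-1)/2} u_t\,\ud S$ and substituting $J' = -(p+1)E/p$ rearranges to $E' \le -2E^2/(pJ)$. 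Consequently, the Rayleigh-type quotient
\[
R(t) := \frac{E(t)}{J(t)^{2/(p+1)}}
\]
satisfies $R'(t) = J^{-2/(p+1)}\bigl(E' + \tfrac{2 E^2}{pJ}\bigr) \le 0$, so $R$ is nonincreasing along the flow.

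The heart of the argument is to establish two-sided bounds $c_0 \le |R(t)| \le C_0$ on a time interval $[t_\diamond, T^*)$, with $0 < t_\diamond < T^*$ and $c_0, C_0 > 0$ depending only on the admissible parameters. In the extinction case ($\lambda_1>0$, $p>1$), Proposition \ref{prop:sobolev} combined with the trace embedding \eqref{thm:trace} gives $R(t) \ge c_1 > 0$ uniformly; for the upper bound, I would invoke Proposition \ref{cor:boundednessbefore} and parabolic smoothing on a small initial interval $[0,t_\diamond]$ to control $\|u(\cdot,t_\diamond)\|_{C^1(\overline\Omega)}$ by admissible data, whereafter monotonicity propagates the bound $R(t) \le R(t_\diamond) \le C_1$ to $[t_\diamond,T^*)$. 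In the blowup case ($\lambda_1<0$, $0<p<1$), the variational characterization \eqref{eq:energyY}--\eqref{eq:finiteyp} yields the universal lower bound $R(t)\ge Y_p\ge -C_0$; for the upper bound, if $R\ge 0$ throughout $[0,T^*)$ then $J$ would be nonincreasing, contradicting $\limsup_{t\to T^*}J=+\infty$ from \eqref{eq:blowupLp}, so there is some $t_\diamond<T^*$ with $R(t_\diamond)<0$, and monotonicity gives $R(t)\le R(t_\diamond)<0$ on $[t_\diamond,T^*)$.

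Given these two-sided bounds on $R$, the identity $J'(t)/J(t)^{2/(p+1)} = -(p+1)R(t)/p$ is bounded above and below by constants of the same sign, so $g(t) := J(t)^{(p-1)/(p+1)}$ satisfies $g'(t) \in [-C',-c']$ on $[t_\diamond,T^*)$. I claim $g(T^*) = 0$: in the extinction case because $J$ is nonincreasing and $\liminf_{t\to T^*} J = 0$ by \eqref{eq:extinctionLp}; in the blowup case because $J$ is strictly increasing on $[t_\diamond,T^*)$ (since $E<0$ there) with $\limsup_{t\to T^*}J = +\infty$ by \eqref{eq:blowupLp}. Integrating $g'$ from $t$ to $T^*$ then yields $c''(T^*-t)\le g(t)\le C''(T^*-t)$ on $[t_\diamond,T^*)$, whence $\|u(\cdot,t)\|_{L^{p+1}(\partial\Omega)}=J(t)^{1/(p+1)}$ lies between two multiples of $(T^*-t)^{1/(p-1)}$. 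On the complementary interval $[0,t_\diamond]$, both $J(t)$ and $T^*-t$ are bounded above and away from $0$ by admissible data (via Propositions \ref{cor:boundednessbefore} and Corollary \ref{cor:estimateT^*}), so the bounds hold there with adjusted constants.

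The main obstacle is the quantitative control of $t_\diamond$ and of $|R(t_\diamond)|$ in the blowup case purely in terms of $n,p,a,\Omega,\max_{\partial\Omega}u_0,\min_{\partial\Omega}u_0$, which is needed for the constants $c,C$ in the final estimate to depend on these parameters alone. This is handled by chaining through the a priori pointwise bounds of Proposition \ref{cor:boundednessbefore}(iv), the lower bound on $T^*$ from Corollary \ref{cor:estimateT^*}, and the monotonicity of $R$.
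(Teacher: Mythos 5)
Your energy identities, the Rayleigh-type quotient $R(t)$ and its monotonicity $R'\le 0$, and the substitution $g(t)=Z(t)=J(t)^{(p-1)/(p+1)}$ with $Z'(t)=-\frac{p-1}{p}R(t)$ all coincide with the paper's Berryman--Holland argument. The divergence is in how the \emph{second} linear estimate on $Z$ is obtained. You aim at two-sided bounds $c_0\le |R|\le C_0$ on $[t_\diamond,T^*)$ and then integrate $g'\in[-C',-c']$. In the blowup case $(\lambda_1<0,0<p<1)$ this requires a quantitative bound $R(t_\diamond)\le -c<0$ with $c$ admissible, but your argument only shows $R(t_\diamond)<0$ (from the observation that $R\ge0$ would force $J$ nonincreasing). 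You acknowledge this as the main obstacle and say it is ``handled by chaining'' through Proposition \ref{cor:boundednessbefore}(iv), Corollary \ref{cor:estimateT^*}, and monotonicity of $R$, but those give pointwise bounds on $u$ and bounds on $T^*$; they do not furnish a lower bound on $-E(t_\diamond)$ in terms of admissible data, and it is not clear how to extract one from them. This is a genuine gap. In the extinction case your upper bound $R(t_\diamond)\le C_1$ demands a parabolic smoothing estimate for $\|u(\cdot,t_\diamond)\|_{C^1(\overline\Omega)}$ in terms of $\max u_0,\min u_0$; this is plausible but is an extra ingredient the paper does not use, and one must be careful not to invoke Theorem \ref{thm:higherestimate} here, which lies downstream of this very proposition.

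The paper sidesteps both issues by exploiting a consequence you already have for free: since $R'\le0$, the function $Z$ is convex when $p>1$ and concave when $p<1$, because $Z''=-\frac{p-1}{p}R'$. A \emph{one-sided} bound on $R$ (coercivity giving $R\ge c_1>0$ for $p>1$; $R\ge Y_p\ge -C_0$ from \eqref{eq:finiteyp} for $p<1$) yields one linear estimate on $Z$. The other estimate comes from the secant-line inequality for the convex (resp.\ concave) $Z$: for $0<s<t<T<T^*$,
\[
Z(t)\;\le\;(\text{resp. }\ge)\;Z(T)+\frac{Z(s)-Z(T)}{s-T}(t-T),
\]
and letting $T\to T^*$ (using $Z(T^*)=0$, which you have argued from \eqref{eq:extinctionLp} or \eqref{eq:blowupLp}) and $s\to 0^+$ gives $Z(t)\le Z(0)(T^*-t)/T^*$ (resp.\ $\ge$). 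This uses only $Z(0)=J(0)^{(p-1)/(p+1)}$ and the admissible bounds on $T^*$ from Corollary \ref{cor:estimateT^*}, so no smoothing and no quantitative negative bound on $R$ are needed. Replacing your two-sided Rayleigh quotient step with this secant argument closes the gap and shortens the proof.
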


\begin{proof}
By \eqref{1}, we have
\begin{equation}\label{12}
   \frac{\ud}{\ud t}\int_{\partial \Omega} u^{p+1}(x,t) \,\ud S = -\frac{p+1}{p} \bigg(\int_{\Omega}|\nabla u(x,t)|^{2}\,\ud x +\int_{\partial \Omega}a u^{2}(x,t) \,\ud S \bigg),
\end{equation}
and
\begin{equation*}
    \frac{\ud}{\ud t}\bigg(\int_{\Omega}|\nabla u(x,t)|^{2}\,\ud x + \int_{\partial \Omega} a u^{2}(x,t) \,\ud S \bigg)   =   - \frac{2}{p}  \int_{\partial \Omega} \frac{ (\partial_{\nu}u +a u)^{2}}{u^{p-1} }  \,\ud S\le 0.
\end{equation*}
Let
\[
I(t)=  \frac{  \displaystyle \int_{\Omega}|\nabla u|^{2}\,\ud x    +\int_{\partial \Omega}a u^{2} \,\ud S    }{ \displaystyle\left(\int_{\partial \Omega}u^{p+1} \,\ud S\right)^{\frac{2}{p+1}}   }  .
\]
Then
\begin{align*}
\frac{\ud}{\ud t} I(t)&=\frac{2}{p} \left(\int_{\partial \Omega} u^{p+1}(x,t) \,\ud S\right) ^{-\frac{2}{p+1}}\\
&\cdot \left[\frac{\displaystyle\bigg(\int_{\Omega}|\nabla u(x,t)|^{2}\,\ud x + \int_{\partial \Omega} a u^{2}(x,t) \,\ud S \bigg)^2}{\displaystyle\int_{\partial \Omega} u^{p+1}(x,t) \,\ud S }- \int_{\partial \Omega} \frac{ (\partial_{\nu}u +a u)^{2}}{u^{p-1} }  \,\ud S \right].
\end{align*}
Since $u$ is harmonic in $\Omega$, we have $   \int_{\Omega}|\nabla u|^{2}\,\ud x = \int_{\partial \Omega} u \partial_{\nu}u \,\ud S  $. Applying the Cauchy-Schwarz inequality, we obtain:
\begin{equation*}
  \bigg(\int_{\Omega}|\nabla u|^{2}\,\ud x    +\int_{\partial \Omega}a u^{2} \,\ud S     \bigg)^{2}  \leq  \int_{\partial \Omega}u^{p+1} \,\ud S    \int_{\partial \Omega} \frac{(\partial_{\nu}u +au)^{2}}{u^{p-1}}\,\ud S.
\end{equation*}
Hence,
\begin{equation}\label{14}
\frac{\ud}{\ud t} I(t)\le 0.
\end{equation}

Define
\begin{equation*}
  Z(t):=    \bigg(  \int_{\partial \Omega} u^{p+1}(x,t) \,\ud S \bigg)^{\frac{p-1}{p+1}}.
\end{equation*}
Then it follows from \eqref{12} that
\begin{equation}\label{eq:Zt}
  Z'(t)= -\frac{p-1}{p}  I(t).
\end{equation}

Case 1: $\lambda_1>0$ and $p>1$.

Then it follows from \eqref{eq:Zt} that
\[
  Z'(t)\le -C
\]
for some $C>0$ depending only on $n,p, a$ and $\Omega$, where we used $p>1$, the trace inequality \eqref{thm:trace} and Proposition \ref{prop:sobolev} in the last inequality. Then
\[
Z(T)-Z(t)\le -C(T-t).
\]
Taking $\liminf_{T\to (T^*)^-}$ on both sides and using \eqref{eq:extinctionLp}, we obtain
\begin{equation*}
Z(t)\ge C(T^*-t).
\end{equation*}
Hence, the first inequality follows. 

By \eqref{eq:Zt} and \eqref{14}, we have $Z''(t)\ge 0$. Hence, for $0<s<t<T<T^*$, we have
\[
Z(t)\le Z(T)+\frac{Z(s)-Z(T)}{s-T} (t-T).
\]
Taking $\liminf_{T\to (T^*)^-}$ and $\lim_{s\to 0^+}$ on both sides, and using \eqref{eq:extinctionLp}, we have
\[
Z(t)\le \frac{Z(0)}{T^*} (T^*-t).
\]
Hence, the second inequality follows, with the help of Corollary \ref{cor:estimateT^*}. 

Case 2: $\lambda_1<0$ and $0<p<1$. 

It follows from  \eqref{eq:Zt} and \eqref{eq:finiteyp} that
\begin{equation*}
  Z'(t)= \frac{1-p}{p}  I(t)\ge  \frac{Y_p(1-p)}{p}\ge -C,
\end{equation*}
where $C>0$ depending only on $n,p,\Omega$ and $a$. Then integrating in $t$, we have 
\[
Z(T)-Z(t)\ge -C(T-t).
\]
Taking $\limsup_{T\to (T^*)^-}$ on both sides and using \eqref{eq:blowupLp}, we obtain
\begin{equation*}
Z(t)\le C(T^*-t).
\end{equation*}
Hence, the second inequality follows.

By \eqref{eq:Zt} and \eqref{14}, we have $Z''(t)\le 0$. Hence, for $0<s<t<T<T^*$, we have
\[
Z(t)\ge Z(T)+\frac{Z(s)-Z(T)}{s-T} (t-T).
\]
Taking $\limsup_{T\to (T^*)^-}$ and $\lim_{s\to 0^+}$ on both sides,, and using \eqref{eq:blowupLp}, we have
\[
Z(t)\ge \frac{Z(0)}{T^*} (T^*-t).
\]
Hence, the first inequality follows, with the help of Corollary \ref{cor:estimateT^*}. 
\end{proof}

\section{Uniform upper and lower bounds}\label{sec:uniformbound}

Let $\lambda_1(p-1)>0$, $u$ and $T^*<\infty$ be as in Proposition \ref{prop:integralbound}. 
Define 
\begin{equation}\label{eq:changetow}
  w(x,\tau)=\frac{u(x,t)}{( T^{\ast} - t   )^{\frac{1}{p-1}} } \ \  \mbox{with} \ \ t= T^{\ast}(1-e^{-\tau}).
\end{equation}
Then $w(x,\tau)$ is smooth, positive, locally bounded on $\overline\Omega\times[0,+\infty)$, and satisfies
\begin{equation}\label{18}
\left\{
  \begin{aligned}
 \Delta w(x,\tau) &= 0 \quad  \mbox{in }  \Omega\times(0, \infty ),        \\
     \partial_{\tau}w^p&=-  \partial_{\nu}w  -  a w + \frac{p}{p-1}w^{p}   \quad  \mbox{on }  \partial \Omega \times(0, \infty ),\\
     w(x,0)&=w_{0}:=\frac{u_{0}}{{T^{*}}^{\frac{1}{p-1}}} \quad   \mbox{on }  \partial \Omega \times \{t=0\}.
\end{aligned}
\right.
\end{equation}
It follows from Proposition \ref{prop:integralbound} that there exist $C>0$ and $c>0$, both of which depend only on $n,p,a,\Omega, \max_{\partial\Omega}u_0$ and $\min_{\partial\Omega}u_0$, such that
\begin{equation}\label{19}
c\leq  {\|w(\cdot,\tau)  \|}_{L^{p+1}(\partial \Omega)} \le C\quad\mbox{for all }\tau>0.
\end{equation}
Once we have the integral bounds \eqref{19} of $w$, we can use Moser's iteration, similar to that in Bonforte-V\'azquez \cite{BV} for the classic fast diffusion equation \eqref{eq:fde}, to derive its $L^\infty$ bound. 
\begin{prop}\label{P7-1}
Assume all the assumptions in Proposition \ref{prop:integralbound}. Let $u$ and $T^*$ be as in Proposition \ref{prop:integralbound}. Let $w$ be defined in \eqref{eq:changetow}. Then there exists $C>0 $ depending only on $n,p,a,\Omega, \max_{\partial\Omega}u_0$ and $\min_{\partial\Omega}u_0$ such that
$$
w(x,\tau)\leq C \quad \mbox{for all} \  x  \in   \partial \Omega\mbox{ and }\tau \geq 0.
$$
\end{prop}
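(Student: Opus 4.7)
The plan is a Moser iteration on the boundary equation in \eqref{18}, bootstrapping from the uniform $L^{p+1}(\partial\Omega)$ bound \eqref{19} supplied by Proposition \ref{prop:integralbound}, in the spirit of the cited work of Bonforte--V\'azquez for \eqref{eq:fde}.

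For each $m > p$, I multiply the second equation in \eqref{18} by $w^{m-p}$ and integrate over $\partial\Omega$. The harmonicity of $w$ together with an integration by parts converts $\int_{\partial\Omega} w^{m-p}\,\partial_\nu w\,\ud S$ into $(m-p)\int_\Omega w^{m-p-1}|\nabla w|^2\,\ud x = \frac{4(m-p)}{(m-p+1)^2}\int_\Omega |\nabla v|^2\,\ud x$ with $v := w^{(m-p+1)/2}$, while $\int_{\partial\Omega} w^{m-p}\,\partial_\tau w^p\,\ud S = \frac{p}{m}\frac{\ud}{\ud\tau}\int_{\partial\Omega} w^m\,\ud S$. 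Applying the trace Sobolev inequality \eqref{thm:trace} to $v$ with exponent $q_2 + 1 = 2\theta$, where $\theta := (n-1)/(n-2)$ if $n\ge 3$ (and any fixed $\theta > 1$ if $n=2$), and using H\"older and Young inequalities to control the lower-order contributions $\int_{\partial\Omega} a\, w^{m-p+1}\,\ud S$ and $\frac{p}{p-1}\int_{\partial\Omega} w^m\,\ud S$, I obtain a differential inequality of the schematic form
\[
\frac{\ud}{\ud\tau}\int_{\partial\Omega} w^m\,\ud S + c(m)\left(\int_{\partial\Omega} w^{(m-p+1)\theta}\,\ud S\right)^{1/\theta} \le C(m)\left(1 + \int_{\partial\Omega} w^m\,\ud S\right).
\]
A direct computation shows that $(m-p+1)\theta > m$ iff $m > (p-1)(n-1)$, and the subcriticality in \eqref{2}, namely $p < n/(n-2)$, is exactly what guarantees this at the base exponent $m_0 := p+1$; so each step genuinely enlarges the integrability exponent.

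I then iterate along a geometric sequence $m_{k+1} := (m_k - p + 1)\theta$ tending to $+\infty$. Integrating the differential inequality on unit time slabs $[\tau-1,\tau]$ and combining with a Gr\"onwall-type argument, a uniform $L^{m_k}(\partial\Omega)$ bound for $\tau \geq \tau_k$ promotes to a uniform $L^{m_{k+1}}(\partial\Omega)$ bound for $\tau \geq \tau_k + 1$; the base case $k = 0$ is \eqref{19}, and the initial window $\tau \in [0,1]$ is covered by the short-time existence (Theorem \ref{thm:short}) together with Proposition \ref{cor:boundednessbefore}. The main obstacle is quantitative control of the constants $c(m_k)$ and $C(m_k)$ as $k\to\infty$; because $m_k$ grows geometrically and these constants are algebraic in $m_k$, the standard Moser bookkeeping yields a finite limit, producing the uniform $L^\infty(\partial\Omega)$ bound asserted in the proposition.
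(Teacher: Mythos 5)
Your approach is essentially the same as the paper's: a boundary Moser iteration starting from the uniform $L^{p+1}(\partial\Omega)$ bound \eqref{19}, testing the boundary equation against $w^{m-p}$ (the paper writes this as $w^{1+\alpha}$, $\alpha=m-p-1$), passing $\partial_\nu w$ into the bulk Dirichlet energy of $w^{(m-p+1)/2}$, and using the trace inequality \eqref{thm:trace} so that subcriticality $p<n/(n-2)$ guarantees the exponent increments from the base $m_0=p+1$. You package the parabolic step as a differential inequality plus Gr\"onwall, whereas the paper introduces a time cutoff $\eta$ and extracts a near-maximizing time slice to convert the space-time integral bound into a sup-in-time bound (equations \eqref{22}--\eqref{23}); both are standard and essentially interchangeable.

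There is, however, a genuine gap. The hypothesis $\lambda_1(p-1)>0$ covers two cases, and your scheme only treats one of them correctly. You describe $\int_{\partial\Omega}a\,w^{m-p+1}\,\ud S$ as a ``lower-order contribution.'' This is true when $p>1$, since then $m-p+1<m$ and one can absorb it into $C(m)(1+\int_{\partial\Omega}w^m)$ by H\"older on the bounded set $\partial\Omega$. But when $0<p<1$ (which occurs under the hypothesis when $\lambda_1<0$), we have $m-p+1>m$, so this term is in fact of \emph{higher} order than the working exponent, and the claimed right-hand side $C(m)\bigl(1+\int_{\partial\Omega}w^m\bigr)$ does not bound it as stated; one must interpolate $\int_{\partial\Omega}w^{m-p+1}$ between $\int_{\partial\Omega}w^m$ and $\bigl(\int_{\partial\Omega}w^{(m-p+1)\theta}\bigr)^{1/\theta}$ and absorb the higher piece into the left-hand side by Young's inequality, which changes the shape of the differential inequality (the exponent on $\int w^m$ exceeds one). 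The paper addresses this by running the $0<p<1$ case with a different working exponent, $q_k=\alpha_k+2$ rather than $q_k=\alpha_k+p+1$ (its Case 2). Note also that the sign of $\mathrm{sgn}(\lambda_1)\frac{p}{|p-1|}=\frac{p}{p-1}$ flips between the two cases, so the reaction term $\frac{p}{p-1}\int_{\partial\Omega}w^m$ is favorable for $0<p<1$ but unfavorable for $p>1$; it is only the $a$-term that causes trouble for $0<p<1$. Without a separate treatment of this case the iteration as written does not close, so you should split into $p>1$ and $0<p<1$ as the paper does.
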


\begin{proof}
It follows from Proposition \ref{cor:boundednessbefore} and Corollary \ref{cor:estimateT^*} that there exists $\tau_0>0 $ depending only on $n,p,a,\Omega, \max_{\partial\Omega}u_0$ and $\min_{\partial\Omega}u_0$ such that the conclusion holds for $0\le\tau\le \tau_0$. In the following, we shall prove it for $\tau\ge \tau_0$. By rescaling, we can assume $\tau_0=1$.

We only provide the proof for $n\ge 3$, while the other case $n=2$ can be proved in the same way. 

Let $0<S_{1}<S_{2}<S$, such that $|S_{2}-S_{1}|\leq 1$. Let $\eta(\tau) $ be a smooth cut-off function satisfying  $\eta(\tau)=0$   for  $\tau< S_{1}$,  $0 \leq \eta(\tau)\leq 1$  for   $S_{1}  \leq \tau\leq S_{2}$, $\eta(\tau)=1$  for $\tau>S_{2}$, and $\eta'(\tau) \leq \frac{2}{S_{2}-S_{1}}$.
Let $\Gamma_{1}=\partial \Omega \times [S_{1},S]$, $\Gamma_{2}=\partial \Omega \times [S_{2},S]$, $Q_{1}= \Omega \times [S_{1},S]$ ,            $Q_{2}= \Omega \times [S_{2},S]$.  Let $\alpha> -1$.  In the below, $C$ will be different constants depending only on $n,p,a,\Omega, \max_{\partial\Omega}u_0$ and $\min_{\partial\Omega}u_0$, but may  change from lines to lines.  

Multiplying $\eta^{2}w^{1+\alpha}$ to the boundary equation of \eqref{18} and integrating over $\Gamma_{1}$, we have
$$
\iint_{\Gamma_{1}}\eta^{2}w^{1+\alpha}\frac{\partial}{\partial \tau}w^{p}+\iint_{\Gamma_{1}}\eta^{2}w^{1+\alpha}\frac{\partial}{\partial \nu}w \le -\iint_{\Gamma_{1}}a\eta^{2}w^{2+\alpha}+\frac{p}{|p-1|}\iint_{\Gamma_{1}}\eta^{2}w^{p+1+\alpha}.
$$
Then integration by parts gives
$$
\iint_{\Gamma_{1}}\eta^{2}w^{1+\alpha}\frac{\partial}{\partial \tau}w^{p}+\iint_{Q_{1}}\eta^{2}\nabla(w^{1+\alpha}) \nabla w \leq C\iint_{\Gamma_{1}}\eta^{2}w^{2+\alpha}+C\iint_{\Gamma_{1}}\eta^{2}w^{p+1+\alpha}.
$$
On the left hand side, for the first term, we have
\begingroup
\allowdisplaybreaks
\begin{align*}
    \iint_{\Gamma_{1}}\eta^{2}w^{1+\alpha}\frac{\partial}{\partial \tau}w^{p} & =  \frac{p}{p+1+\alpha}  \iint_{\Gamma_{1}}\eta^{2}\frac{\partial}{\partial \tau}w^{p+1+\alpha}\\
      & = \frac{p}{p+1+\alpha}  \int_{\partial \Omega}(w^{p+1+\alpha})(S)-\frac{p}{p+1+\alpha}  \iint_{\Gamma_{1}}2\eta\eta_{\tau}w^{p+1+\alpha}\\
      &\geq \frac{p}{p+1+\alpha}\int_{\partial \Omega}(w^{p+1+\alpha})(S)- \frac{C}{ S_{2}-S_{1}   } \iint_{\Gamma_{1}}w^{p+1+\alpha}.
\end{align*}
\endgroup
For the second term, we obtain
\begin{equation*}
  \begin{split}
  \iint_{Q_{1}}\eta^{2}\nabla( w^{1+\alpha}) \nabla w    &  =  \frac{4(1+\alpha)}{(2+\alpha)^2}\iint_{Q_{1}}\eta^{2}{|\nabla( w^{\frac{2+\alpha}{2}})|}^{2} .
  \end{split}
\end{equation*}
Using above three estimates, we obtain
\begin{equation}\label{22}
  \int_{\partial \Omega}(w^{p+1+\alpha})(S) + \iint_{Q_{2}}{|\nabla( w^{\frac{2+\alpha}{2}})|}^{2} \leq \frac{C(1+\alpha)}{S_{2}-S_{1}}\iint_{\Gamma_{1}}w^{\alpha}(w^{p+1}+w^{2}).
\end{equation}
We can find a $s_{0}\in [S_{2},S]$ such that
$$
\int_{\partial \Omega}(w^{p+1+\alpha})(s_{0})\geq \frac{1}{2}\sup_{\tau\in{[S_{2},S]}}\int_{\partial \Omega}(w^{p+1+\alpha})(\tau).
$$
We replace $S$ by $s_{0}$ in \eqref{22} to obtain
\begin{equation}\label{23}
  \sup_{\tau\in{[S_{2},S]}}\int_{\partial \Omega}(w^{p+1+\alpha})(\tau)+\iint_{Q_{2}}{|\nabla( w^{\frac{2+\alpha}{2}})|}^{2}\leq
  \frac{C(1+\alpha)}{S_{2}-S_{1}}\iint_{\Gamma_{1}}w^{\alpha}(w^{p+1}+w^{2}).
\end{equation}

Case 1: $p>1$.
By \eqref{19} and H\"older's inequality we get
\begin{equation*}
 {C}^{\frac{p+1}{p-1}}\leq \int_{\partial \Omega}w^{p+1}\leq { |\partial\Omega|}^{\frac{\alpha}{p+1+\alpha}}\Big(\int_{\partial \Omega}w^{p+1+\alpha}\Big)^{\frac{p+1}{p+1+\alpha}}\leq C\Big(\int_{\partial \Omega}w^{p+1+\alpha}\Big)^{\frac{p+1}{p+1+\alpha}}.
\end{equation*}
So we have
\begin{equation}\label{26}
\frac{1}{C}\leq\Big(\int_{\partial \Omega}w^{p+1+\alpha}\Big)^{\frac{1}{p+1+\alpha}}.
\end{equation}
The constant $C$ in \eqref{26} is independent of $\alpha$, because ${ |\partial\Omega|}^{\frac{\alpha}{p+1+\alpha}}$ is bounded by a constant independent of all $   \alpha \geq 0$.
By $p>1$, we have
\begin{equation}\label{27}
\int_{\partial \Omega}w^{\alpha+2}\leq{ |\partial\Omega|}^{\frac{p-1}{p+1+\alpha}}\Big(\int_{\partial \Omega}w^{p+1+\alpha}\Big)^{\frac{\alpha+2}{p+1+\alpha}}\leq C\int_{\partial \Omega}w^{p+1+\alpha}.
\end{equation}
We obtain from  \eqref{23} that
\begin{equation}\label{28}
  \sup_{\tau\in{[S_{2},S]}}\int_{\partial \Omega}(w^{p+1+\alpha})(\tau)+\iint_{Q_{2}}{|\nabla( w^{\frac{2+\alpha}{2}})|}^{2}\leq
  \frac{C(1+\alpha)}{S_{2}-S_{1}}\iint_{\Gamma_{1}}w^{\alpha+p+1}.
\end{equation}
For each $\sigma>1$,  using \eqref{thm:trace} and \eqref{27}, we have
\begin{equation*}
  \begin{split}
        \int_{\partial \Omega} w^{(\alpha+2)\sigma}          &  =  \int_{\partial \Omega} w^{\alpha+2}   w^{(\alpha+2)(\sigma-1)}       \\
                  & \leq     \Big(\int_{\partial \Omega} w^{ \frac{(\alpha+2)(n-1)}{n-2}   } \Big)^{\frac{n-2}{n-1}}  \Big(\int_{\partial \Omega} w^{(\alpha+2)(\sigma-1)(n-1)    } \Big)^{\frac{1}{n-1}}\\
                  &\leq C    \Big(\int_{\Omega}  {|\nabla( w^{\frac{\alpha+2}{2}})|}^{2}  + \int_{\partial \Omega}w^{\alpha+2}   \Big)                     \Big(\int_{\partial \Omega} w^{(\alpha+2)(\sigma-1)(n-1)    } \Big)^{\frac{1}{n-1}}\\
                  &\leq C\Big(\int_{\Omega}  {|\nabla( w^{\frac{\alpha+2}{2}})|}^{2}  +  \int_{\partial \Omega}w^{\alpha+p+1}   \Big)                     \Big(\int_{\partial \Omega} w^{(\alpha+2)(\sigma-1)(n-1)    } \Big)^{\frac{1}{n-1}}.
  \end{split}
\end{equation*}
Integrating the above inequality over time interval $[S_{2},S]$, and using \eqref{27} and \eqref{28}, we obtain 
\begin{equation}\label{29}
  \begin{split}
  &\iint_{\Gamma_{2}} w^{(\alpha+2)\sigma}  \\
    &  \leq C\Big(\iint_{Q_{2}}  {|\nabla( w^{\frac{\alpha+2}{2}})|}^{2}  +  \iint_{\Gamma_{2}}w^{\alpha+p+1}   \Big)    \Big(\sup_{\tau\in[S_{2},S]}\int_{\partial \Omega} w^{(\alpha+2)(\sigma-1)(n-1)    }(\tau) \Big)^{\frac{1}{n-1}}      \\
                  & \leq  \frac{C(1+\alpha)}{S_{2}-S_{1}}\Big(\iint_{\Gamma_{1}}w^{\alpha+p+1}   \Big) \Big(\sup_{\tau\in[S_{2},S]}\int_{\partial \Omega} w^{(\alpha+2)(\sigma-1)(n-1)    }(\tau) \Big)^{\frac{1}{n-1}}  .
  \end{split}
\end{equation}
Let
$$
\alpha+p+1=(\alpha+2)(\sigma-1)(n-1).
$$
Then
$$
\sigma=1+\frac{\alpha+p+1}{(n-1)(\alpha+2)}.
$$
For $p$ satisfying \eqref{2} with $p>1$, i.e. $ 2 < p+1 <   \frac{2(n-1)}{n-2}  $, we know  for all $\alpha \geq 0$ that 
$$
\frac{n}{n-1}<\sigma<\frac{n}{n-2}.
$$
Then by \eqref{28} and \eqref{29}, we have
\begin{equation}\label{30}
  \iint_{\Gamma_{2}}w^{(\alpha+2)\sigma} \leq  C \Big(\frac{1+\alpha}{S_{2}-S_{1}}\Big)^{1+\frac{1}{n-1}}\Big(\iint_{\Gamma_{1}}w^{\alpha+p+1}   \Big)^{1+\frac{1}{n-1}}.
\end{equation}
For $k=0,1,2,3,\dots$, define $\alpha_{0}=0$,  
\begin{align*}
\sigma_{k}&=1+\frac{\alpha_{k}+p+1}{(n-1)(\alpha_{k}+2)}\  \in \ \Big(\frac{n}{n-1},\frac{n}{n-2}\Big) \mbox{ for all} \ \alpha_{k}\geq0,\\
\alpha_{k+1}&=(\alpha_{k}+2)\sigma_{k}-(p+1),\\
q_{k}&=\alpha_{k}+p+1.
\end{align*}
Then
\begin{align*} 
\alpha_{k}&=\Bigg(\Big(1+\frac{1}{n-1}\Big)^{k}-1\Bigg)\Bigg(  p+1-(n-1)(p-1)        \Bigg),\\
q_{k}&=\Big(1+\frac{1}{n-1}\Big)^{k}\Big(  p+1-(n-1)(p-1)        \Big) +(n-1)(p-1).
\end{align*}
Since $p<\frac{n}{n-2}$, one knows that $p+1-(n-1)(p-1)>0$. So $\alpha_{k}$ and $q_{k}$ are strictly increasing and tend to $+\infty$ as $k\to \infty$. Let $s_{0}=\frac{1}{16}$, $s_{1}=c_{0}$, $s_{k+1}-s_{k}=c_{0}k^{-4}$, where $c_{0}= \frac{1}{2}(\sum_{k=0}^{\infty} k^{-4})^{-1}>\frac{3}{16}$. Then
$s_{k}=c_{0}( \sum_{l=0}^{k-1}l^{-4}    )$, $\lim_{k\to\infty}s_{k}=\frac{1}{2}$. Let $\Gamma_{k}=\partial \Omega \times [s_{k},1]$. By \eqref{30} we get
$$
\iint_{\Gamma_{k+1}}w^{q_{k+1}}=\iint_{\Gamma_{k+1}}w^{(\alpha_{k}+2)\sigma_{k}} \leq  C \Big(\frac{1+\alpha_{k}}{c_{0}}k^{4}\Big)^{1+\frac{1}{n-1}}\Big(\iint_{\Gamma_{k}}w^{q_{k}}   \Big)^{1+\frac{1}{n-1}}.
$$
Then
\begin{equation*}
  \begin{split}
&  \Big(\iint_{\Gamma_{k+1}}w^{q_{k+1}}  \Big)^{\frac{1}{q_{k+1}}} \\
   &  \leq C^{ \frac{1}{q_{k+1}}  }\Big(1+\frac{1}{n-1}\Big)^{\frac{2k}{q_{k+1}}(1+\frac{1}{n-1})  } \Big(\iint_{\Gamma_{k}}w^{q_{k}}  \Big)^{\frac{1}{q_{k}}(1+\frac{1}{n-1})\frac{q_{k}}{q_{k+1}}}     \\
                  & \leq C^{ \frac{1}{q_{k+1}}\sum_{j=0}^{k}(1+\frac{1}{n-1} )^{j}  } D ^{ \frac{1}{q_{k+1}}\sum_{j=0}^{k}(k-j)(1+\frac{1}{n-1} )^{j}  }\Big(\iint_{\Gamma_{0}}w^{q_{0}}   \Big)^{ \frac{1}{q_{k+1}} (1+\frac{1}{n-1})^{k+1}},
  \end{split}
\end{equation*}
where $D=(1+\frac{1}{n-1})^{2(1+\frac{1}{n-1})}$. We send $k\to \infty$ to get
$$
{\|w\|}_{L^{\infty}(\partial \Omega \times [\frac{1}{2},1])} \leq C \Big(\iint_{\Gamma_{0}}w^{p+1}   \Big)^{  \big(  p+1-(n-1)(p-1) \big)^{-1}         }\leq C.
$$
Since the equation is translation invariant in the time variable, this proposition for $p>1$ is proved.

Case 2: $0<p<1$.
By \eqref{19} and H\"older's inequality we get
\begin{equation*}
 {C}^{\frac{p+1}{p-1}}\leq \int_{\partial \Omega}w^{p+1}\leq { |\partial\Omega|}^{\frac{1-p+\alpha}{2+\alpha}}\Big(\int_{\partial \Omega}w^{2+\alpha}\Big)^{\frac{p+1}{2+\alpha}}\leq C\Big(\int_{\partial \Omega}w^{2+\alpha}\Big)^{\frac{p+1}{2+\alpha}}.
\end{equation*}
So we have
\begin{equation}\label{26-2}
\frac{1}{C}\leq\Big(\int_{\partial \Omega}w^{2+\alpha}\Big)^{\frac{1}{2+\alpha}}.
\end{equation}
The constant $C$ in \eqref{26-2} is independent of $\alpha$, because ${ |\partial\Omega|}^{\frac{1-p+\alpha}{2+\alpha}}$ is bounded by a constant independent of all $   \alpha \geq p-1$.
By $0<p<1$, we have
\begin{equation}\label{27-2}
\int_{\partial \Omega}w^{p+1+\alpha}\leq{ |\partial\Omega|}^{\frac{1-p}{2+\alpha}}\Big(\int_{\partial \Omega}w^{2+\alpha}\Big)^{\frac{\alpha+p+1}{2+\alpha}}\leq C\int_{\partial \Omega}w^{2+\alpha}.
\end{equation}
We obtain from  \eqref{23} that
\begin{equation}\label{28-2}
  \sup_{\tau\in{[S_{2},S]}}\int_{\partial \Omega}(w^{p+1+\alpha})(\tau)+\iint_{Q_{2}}{|\nabla( w^{\frac{2+\alpha}{2}})|}^{2}\leq
  \frac{C(1+\alpha)}{S_{2}-S_{1}}\iint_{\Gamma_{1}}w^{\alpha+2}.
\end{equation}
For each $\sigma\in (1,\frac{n-1}{n-2})$, i.e. $2\sigma\in (2,\frac{2(n-1)}{n-2})$, using \eqref{thm:trace} and \eqref{27-2}, we have
\begin{equation*}
  \begin{split}
        \int_{\partial \Omega} w^{(\alpha+2)\sigma}          &  =  \int_{\partial \Omega} w^{\alpha+2}   w^{(\alpha+2)(\sigma-1)}       \\
                  & \leq     \Big(\int_{\partial \Omega} w^{ \frac{(\alpha+2)(n-1)}{n-2}   } \Big)^{\frac{n-2}{n-1}}  \Big(\int_{\partial \Omega} w^{(\alpha+2)(\sigma-1)(n-1)    } \Big)^{\frac{1}{n-1}}\\
                  &\leq C    \Big(\int_{\Omega}  {|\nabla( w^{\frac{\alpha+2}{2}})|}^{2}  + \int_{\partial \Omega}w^{\alpha+2}   \Big)                     \Big(\int_{\partial \Omega} w^{(\alpha+2)(\sigma-1)(n-1)    } \Big)^{\frac{1}{n-1}}.
  \end{split}
\end{equation*}
Integrating the above inequality over time interval $[S_{2},S]$, and using \eqref{27-2} and \eqref{28-2}, we obtain 
\begin{equation}\label{29-2}
  \begin{split}
  &\iint_{\Gamma_{2}} w^{(\alpha+2)\sigma}  \\
    &  \leq C\Big(\iint_{Q_{2}}  {|\nabla( w^{\frac{\alpha+2}{2}})|}^{2}  +  \iint_{\Gamma_{2}}w^{\alpha+2}   \Big)    \Big(\sup_{\tau\in[S_{2},S]}\int_{\partial \Omega} w^{(\alpha+2)(\sigma-1)(n-1)    }(\tau) \Big)^{\frac{1}{n-1}}      \\
                  & \leq  \frac{C(1+\alpha)}{S_{2}-S_{1}}\Big(\iint_{\Gamma_{1}}w^{\alpha+2}   \Big) \Big(\sup_{\tau\in[S_{2},S]}\int_{\partial \Omega} w^{(\alpha+2)(\sigma-1)(n-1)    }(\tau) \Big)^{\frac{1}{n-1}}  .
  \end{split}
\end{equation}
Let
$$
\alpha+p+1=(\alpha+2)(\sigma-1)(n-1).
$$
Then
$$
\sigma=1+\frac{\alpha+p+1}{(n-1)(\alpha+2)}.
$$
For $0<p<1$, we know that for all $\alpha \geq p-1$,
$$
1+\frac{2p}{(n-1)(p+1)}<\sigma<\frac{n}{n-1}
$$
Then by \eqref{28-2} and \eqref{29-2}, we have
\begin{equation}\label{30-2}
  \iint_{\Gamma_{2}}w^{(\alpha+2)\sigma} \leq  C \Big(\frac{1+\alpha}{S_{2}-S_{1}}\Big)^{1+\frac{1}{n-1}}\Big(\iint_{\Gamma_{1}}w^{\alpha+2}   \Big)^{1+\frac{1}{n-1}}.
\end{equation}
For $k=0,1,2,3,\dots$, define $\alpha_{0}=p-1$,  
\begin{align*}
\sigma_{k}&=1+\frac{\alpha_{k}+p+1}{(n-1)(\alpha_{k}+2)},\\
\alpha_{k+1}&=(\alpha_{k}+2)\sigma_{k}-2,\\
q_{k}&=\alpha_{k}+2.
\end{align*}
Then
\begin{align*} 
\alpha_{k}&=2p \Big(1+\frac{1}{n-1}\Big)^{k} -p-1,\\
q_{k}&=2p \Big(1+\frac{1}{n-1}\Big)^{k} -p+1.
\end{align*}
So  $q_{k}$ is strictly increasing and tend to $+\infty$ as $k\to \infty$. Let $s_{0}=\frac{1}{16}$, $s_{1}=c_{0}$, $s_{k+1}-s_{k}=c_{0}k^{-4}$, where $c_{0}= \frac{1}{2}(\sum_{k=0}^{\infty} k^{-4})^{-1}>\frac{3}{16}$. Then
$s_{k}=c_{0}( \sum_{l=0}^{k-1}l^{-4}    )$, $\lim_{k\to\infty}s_{k}=\frac{1}{2}$. Let $\Gamma_{k}=\partial \Omega \times [s_{k},1]$. By \eqref{30-2} we get
$$
\iint_{\Gamma_{k+1}}w^{q_{k+1}}=\iint_{\Gamma_{k+1}}w^{(\alpha_{k}+2)\sigma_{k}} \leq  C \Big(\frac{1+\alpha_{k}}{c_{0}}k^{4}\Big)^{1+\frac{1}{n-1}}\Big(\iint_{\Gamma_{k}}w^{q_{k}}   \Big)^{1+\frac{1}{n-1}}.
$$
Then by the similar iteration to that in Case 1, we obtain
$$
{\|w\|}_{L^{\infty}(\partial \Omega \times [\frac{1}{2},1])} \leq C \Big(\iint_{\Gamma_{0}}w^{p+1}   \Big)^{\frac{1}{2p}}\leq C.
$$
Since the equation is translation invariant in the time variable, this proposition for $0<p<1$ is proved.
\end{proof}

Using the uniform upper bound in Proposition \ref{P7-1} and the integral lower bound \eqref{19}, we can obtain the uniform lower bound of $w$, with the help of  Lemma \ref{lem:normal derivative lower bound}.

\begin{prop}\label{P7-2}
Let $w$ be as in Proposition \ref{P7-1}. Then there exists $C>0$, which  depends only on $n,p,a,\Omega, \max_{\partial\Omega}u_0$ and $\min_{\partial\Omega}u_0$,  such that
$$
w(x,\tau)\ge \frac{1}{C}  \quad \mbox{for all} \  x  \in   \partial \Omega\mbox{ and }\tau > 0.
$$
\end{prop}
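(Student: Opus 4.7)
The plan is to argue by contradiction, following the strategy of the proof of Theorem \ref{thm:infinitespeed} but adapted to the entire time interval $(0,\infty)$. Suppose no uniform positive lower bound exists; then there are sequences $\epsilon_k \to 0^+$ and $(x_k, \tau_k) \in \partial\Omega \times (0,\infty)$ with $w(x_k, \tau_k) \le \epsilon_k$. Since $w_0 = u_0/(T^*)^{1/(p-1)}$ is continuous and positive on $\partial\Omega$, and $T^*$ is bounded above by Corollary \ref{cor:estimateT^*}, the constant $m_0 := \min_{\partial\Omega} w_0 > 0$ depends only on the allowed parameters. For $\epsilon_k < m_0$ the continuous function $m(\tau) := \min_{\partial\Omega} w(\cdot, \tau)$ satisfies $m(0) > \epsilon_k$ and $m(\tau_k) \le \epsilon_k$, so by the intermediate value theorem there is a first time $\tau_k^* \in (0, \tau_k]$ with $m(\tau_k^*) = \epsilon_k$, attained at some $x_k^* \in \partial\Omega$.

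At $(x_k^*, \tau_k^*)$ three observations are key. First, since $w(x_k^*, \tau) > \epsilon_k = w(x_k^*, \tau_k^*)$ for all $\tau < \tau_k^*$, smoothness of $w$ forces $\partial_\tau w(x_k^*, \tau_k^*) \le 0$, hence $\partial_\tau w^p(x_k^*, \tau_k^*) \le 0$. Second, $w(\cdot, \tau_k^*) \ge \epsilon_k$ on $\partial\Omega$ and $w(\cdot, \tau_k^*)$ is harmonic in $\Omega$, so by the maximum principle $w(\cdot, \tau_k^*) \ge \epsilon_k$ throughout $\overline\Omega$, making $w(\cdot, \tau_k^*) - \epsilon_k$ a nonnegative harmonic function in $\Omega$ that vanishes at $x_k^*$. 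Third, solving the boundary equation in \eqref{18} for $-\partial_\nu w$ and using $\partial_\tau w^p \le 0$ together with $w(x_k^*,\tau_k^*) = \epsilon_k$ gives
\[
-\partial_\nu w(x_k^*, \tau_k^*) = \partial_\tau w^p + aw - \mathrm{sgn}(\lambda_1)\frac{p}{|p-1|} w^p \le C(\epsilon_k + \epsilon_k^p),
\]
where $C$ depends only on $p$ and $\|a\|_{L^\infty(\partial\Omega)}$.

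Applying Lemma \ref{lem:normal derivative lower bound} to $w(\cdot, \tau_k^*) - \epsilon_k$ at the boundary zero $x_k^*$ yields
\[
C(\epsilon_k + \epsilon_k^p) \ge -\partial_\nu w(x_k^*, \tau_k^*) \ge C_\Omega \int_{\partial\Omega} \bigl( w(\cdot, \tau_k^*) - \epsilon_k \bigr)\, dS,
\]
so $\int_{\partial\Omega} w(\cdot, \tau_k^*)\, dS \le C'(\epsilon_k + \epsilon_k^p) + \epsilon_k |\partial\Omega| \to 0$ as $k \to \infty$. Combining this with the uniform upper bound $w \le C_1$ from Proposition \ref{P7-1}, we get
\[
\int_{\partial\Omega} w^{p+1}(\cdot, \tau_k^*)\, dS \le C_1^p \int_{\partial\Omega} w(\cdot, \tau_k^*)\, dS \to 0,
\]
contradicting the integral lower bound in \eqref{19}. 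The main subtlety is setting up the first touching time $\tau_k^*$ and using it to extract the sign $\partial_\tau w^p \le 0$; once this is in hand, the quantitative Hopf lemma converts the small pointwise value $\epsilon_k$ into a small $L^1$, hence $L^{p+1}$, norm, which is what collides with \eqref{19}.
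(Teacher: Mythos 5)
Your proof is correct and follows essentially the same strategy as the paper's: argue by contradiction, locate a first time at which the minimum touches a small level, use that $\partial_\tau w^p \le 0$ there together with the boundary equation to bound $-\partial_\nu w$ from above, then invoke the quantitative Hopf lemma (Lemma \ref{lem:normal derivative lower bound}) applied to $w - \varepsilon$ to force the $L^1$ (hence $L^{p+1}$) norm to be small, contradicting \eqref{19}. The only cosmetic difference is that you phrase it via sequences $\varepsilon_k \to 0$ rather than fixing one small $\varepsilon$ at the outset, and one tiny imprecision: to guarantee $m_0 = \min_{\partial\Omega} w_0 > 0$ uniformly you need the two-sided bound $s_1 \le T^* \le s_2$ from Corollary \ref{cor:estimateT^*} (the upper bound suffices when $p>1$, but for $0<p<1$ the exponent $1/(p-1)$ is negative and the lower bound on $T^*$ is what is actually used).
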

\begin{proof}

It follows from Proposition \ref{cor:boundednessbefore} and Corollary \ref{cor:estimateT^*} that there exists $\tau_0>0 $ depending only on $n,p,a,\Omega, \max_{\partial\Omega}u_0$ and $\min_{\partial\Omega}u_0$ such that the conclusion holds for $0<\tau\le \tau_0$. 

We claim that there exists some $\va>0$ sufficiently small, which will be determined in the end, such that $w>\va$ on $\pa \Omega$ for all time.

If not, we denote $\tau_1$ as the first time that $w$ touches $\va$ at some point $x_1 \in \pa \Omega$. We can choose $\va$ small such that $\tau_1>\tau_0$. Then 
\[
w(x,\tau_1)\ge\va\quad\mbox{in }\Omega, \quad w(x_1,\tau_1)=\va, \quad \pa_\tau w^p \Big|_{(x_1,\tau_1)} \le 0.
\]
Then it follows from the equation \eqref{18} that 
\[
-\pa_\nu w \Big|_{(x_1,\tau_1)  } \le C(\va + \va^p),
\]
where $C>0$ depends only on $p$ and $\|a\|_{L^\infty(\pa \Omega)}$. By applying Lemma \ref{lem:normal derivative lower bound} to $w(\cdot,\tau_1)-\va$, we obtain
\[
-\pa_\nu w \Big|_{(x_1,\tau_1)  } \ge C  
\int_{\pa \Omega}(w(\cdot,\tau_1)-\va) \,\ud S.
\]
Combining above two inequalities, we have
\[
\int_{\pa \Omega}w(\cdot,\tau_1) \,\ud S \le C(\va + \va^p).
\]
By the uniform upper bound in Proposition \ref{P7-1} and the integral lower bounds \eqref{19}, we have 
\[
\int_{\pa \Omega}w(\cdot,\tau_0) \,\ud S\ge c,
\]
where $c>0$ depends only on $n,p,a,\Omega, \max_{\partial\Omega}u_0$ and $\min_{\partial\Omega}u_0$. This will reach a contradiction if $\va$ is sufficiently small.
\end{proof}

Now we can prove Theorem \ref{thm:wellposedness}.
\begin{proof}[Proof of Theorem \ref{thm:wellposedness}]
The conclusions for the cases of $\lambda_1(p-1)\le 0$ follow from Theorem \ref{thm:short}, Proposition \ref{cor:boundednessbefore} (i) and (ii). The conclusion for the case $\lambda_1(p-1)>0$ follows from Theorem \ref{thm:short}, Proposition \ref{prop:integralbound}, Proposition \ref{P7-1} and Proposition \ref{P7-2}.
\end{proof}

\section{Convergence}\label{sec:convergence}

Throughout this section, we always let $u$ and $T^*$ be as in Theorem \ref{thm:wellposedness}, and $w$ be defined in \eqref{eq:changeofvariable}. We will show the asymptotic behavior of $w$ in this section.

\subsection{Convergence to a steady state with rates}

It follows from Theorem \ref{thm:wellposedness} that $1/C\le w\le C$ on $\overline\Omega\times[0,+\infty)$ for  some $C\ge 1$ which depends only on $n,p,a,\Omega, \max_{\partial\Omega}u_0$ and $\min_{\partial\Omega}u_0$. Then we can obtain higher order regularity estimates for $w$.
\begin{thm}\label{thm:higherestimate}
For every positive integer $k$, there exists a constant $C>0$ depending only on $k, n,p,a,\Omega, \max_{\partial\Omega}u_0$ and $\min_{\partial\Omega}u_0$ such that 
\[
\|w\|_{C^k(\overline\Omega\times[1,+\infty))}\le C.
\]
\end{thm}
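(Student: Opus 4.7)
The plan is to bootstrap regularity from the uniform two-sided bound $1/C \le w \le C$ on $\overline\Omega \times [0,\infty)$ established in Theorem \ref{thm:wellposedness}. The regularity is proved first on $\partial\Omega$ via the parabolic evolution, and then lifted to the interior through elliptic theory for $\Delta w = 0$.

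After dividing through by $p w^{p-1}$, the boundary equation in \eqref{eq:scale} can be written as the quasilinear equation
\[
\partial_\tau w = -\frac{1}{p w^{p-1}}\partial_\nu w + g(x,w)\quad\text{on }\partial\Omega\times(0,\infty),
\]
where $g(x,w) = -a(x)w^{2-p}/p + \mathrm{sgn}(\lda_1)\, w/|p-1|$. Thanks to the two-sided bound on $w$, the coefficient $1/(p w^{p-1})$ and the function $g$ are smooth with all derivatives uniformly controlled, so the equation matches the template of Lemma \ref{L2} with the role of $u$ played by $w$ itself. I would run the bootstrap by time-shift and cutoff: for each $\tau_0 \ge 1/2$, fix a smooth $\chi(\tau)$ vanishing for $\tau \le \tau_0$ and equal to $1$ for $\tau \ge \tau_0+1$, and apply Lemma \ref{L2} inductively to the functions $\chi\, \partial^\beta w$, where $\beta$ is a multi-index in the tangential and time directions. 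The starting $W^{1,2}$ bound on a unit cylinder comes from a standard energy estimate (multiplying the boundary equation by $\partial_\tau w$ and integrating over $\partial\Omega\times[\tau_0,\tau_0+2]$, and using the trace/Sobolev inequalities of Section \ref{sec:Preliminaries}). Each subsequent step uses Lemma \ref{L2} to gain one Sobolev derivative, with source term $f$ controlled by the Faà di Bruno formula applied to $g(x,w)$ and by commutators between $\partial^\beta$ and the coefficient $w^{-(p-1)}$; all terms produced involve only strictly lower-order derivatives of $w$, which are controlled at the previous step of the induction.

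Iterating to arbitrary order and using Sobolev embedding on $\partial\Omega\times[\tau_0+1,\tau_0+2]$ yields uniform $C^k$ bounds for every $k$; translation invariance in $\tau$ then glues these into a uniform bound on $\partial\Omega\times[1,\infty)$. The interior regularity comes for free from the Poisson representation: at each fixed $\tau$, standard Schauder estimates for the Dirichlet problem $\Delta w = 0$ in $\Omega$ with $C^k$ boundary data give $C^k(\overline\Omega)$ bounds. Differentiating $\Delta w = 0$ in $\tau$ and repeating the argument for $\partial_\tau^j w$ handles all mixed derivatives, producing the full $C^k(\overline\Omega\times[1,\infty))$ bound.

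The main obstacle is the circular dependence at each step of the bootstrap: the source term produced by Lemma \ref{L2} naturally contains derivatives of the same order as the unknown. Resolution requires careful bookkeeping, absorbing the top-order contribution of the nonlinearity onto the left-hand side (using the smoothness of $g$ together with the uniform two-sided bound on $w$) so that only strictly lower-order derivatives remain in the effective source $f$. A secondary, minor, technical point is the nonintegral exponent $p-1$, which is harmless because $s \mapsto s^{p-1}$ is $C^\infty$ on the compact range $[1/C,C]$ of $w$, so Faà di Bruno controls its derivatives in terms of derivatives of $w$ with constants depending only on the stated parameters.
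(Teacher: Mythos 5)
Your overall strategy (exploit the uniform two-sided bound on $w$, cut off in time, bootstrap with Lemma~\ref{L2}, pass to $C^k$ via Sobolev embedding, use translation invariance in $\tau$, and handle the interior by ellipticity of $\Delta w=0$) is exactly the paper's strategy. But there is a genuine gap in the way you set up the induction: you propose to apply Lemma~\ref{L2} to $\chi\,\partial^\beta w$ where $\beta$ ranges over multi-indices in the tangential and time directions. Lemma~\ref{L2} requires its unknown $\phi$ to be harmonic in $\Omega$, and $\partial^\beta w$ is \emph{not} harmonic when $\beta$ involves tangential differentiation: tangential vector fields do not commute with $\Delta$ on a general bounded domain, so $\Delta(\partial^\beta w)\ne 0$. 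Hence the lemma simply does not apply to those functions (and the extra commutator terms you would create are second order in $w$, not lower order, so they cannot be absorbed into your source $f$). The time derivative case is fine since $\partial_\tau$ commutes with $\Delta$, but that alone does not give you the full $C^k(\overline\Omega\times[1,\infty))$ bound.

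The fix is easy and is precisely what the paper does: you never need to differentiate the equation. Set $\tilde w=\eta w$ for a time cutoff $\eta$, and note that $\tilde w$ itself satisfies the hypotheses of Lemma~\ref{L2} with $\phi=\tilde w$, $u=w$, and a source $f$ that consists only of zeroth-order terms in $\tilde w$ (the terms $-aw$, $\mathrm{sgn}(\lda_1)\frac{p}{|p-1|}w^p$, and $pw^p\,\partial_\tau\eta$ after dividing through). The conclusion of Lemma~\ref{L2} already gains one full Sobolev derivative in \emph{all} boundary cylinder variables: from $\|w\|_{W^{m,2}(\partial\Omega\times I)}\le C$ you get $\|\tilde w\|_{W^{m+1,2}(\partial\Omega\times I)}\le C\|f\|_{W^{m,2}}$, which is controlled since $f$ is built from $w$, $a$, and $\partial_\tau\eta$ without any derivatives of $w$. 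Iterating $m\to m+1$ (shrinking the time interval each time via the cutoff so that the initial condition $\phi|_{\tau=\mbox{start}}=0$ holds) produces arbitrarily high Sobolev norms, and Sobolev embedding finishes. This sidesteps all of your Fa\`a di Bruno and commutator bookkeeping, and the circularity you were worried about never arises. Your energy-estimate warm-up is harmless but unnecessary, since Lemma~\ref{L2} with $m=0$ already gives the $W^{1,2}$ starting bound.
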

\begin{proof}
Let $\tau_0\ge 1$ be arbitrarily fixed. Let $\eta$ be a smooth cut-off such that $\eta=0$ in $[\tau_0-1,\tau_0-\frac{2}{3}]$, $\eta=1$ in $[\tau_0-\frac{1}{2},\infty)$ and $|\eta^{(k)}|\le c_k$ for every positive integer $k$, where $c_k$ is a positive constant depending only on $k$. Let $\tilde w(x,\tau)=\eta(\tau) w(x,\tau)$. Then
\[
 \left\{
\begin{aligned}
 \Delta \tilde w &= 0 \quad  \mbox{in }  \Omega\times(0, \infty ),        \\
pw^{p-1}\partial_{\tau} \tilde w&=-  \partial_{\nu}\tilde w  -  a \tilde w +    \mathrm{sgn}(\lda_1)\frac{p}{|p-1|}w^{p-1}\tilde w  + pw^p\pa_\tau\eta  \quad  \mbox{on }  \partial \Omega \times(0, \infty ).
\end{aligned}
\right.
\]
Applying Lemma \ref{L2} finitely many times and using the Sobolev embedding, we have
\[
\|w\|_{C^k(\overline\Omega\times[\tau_0,\tau_0+1]}\le C.
\]
Since $\tau_0$ is arbitrary, then the conclusion follows.
\end{proof}

We need the following two error estimates for the proof of convergence.

\begin{lem}\label{lem:expgrouthforerror}
Let $\varphi$ be a positive solution of \eqref{eq:steady}, and let $h(x,\tau)=w(x,\tau)-\varphi(x)$. Then there exists $C>0$ depending only on $n,p,a,\Omega, \max_{\partial\Omega}u_0$ and $\min_{\partial\Omega}u_0$ such that
\begin{equation}\label{eq:energyjump}
\|h(\cdot,\tau+t) \|_{L^{2}(\partial \Omega)}\le Ce^{Ct} \|h(\cdot,\tau) \|_{L^{2}(\partial \Omega)}\quad\forall\, \tau>1, \, t>0.
\end{equation}
\end{lem}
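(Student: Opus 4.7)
The plan is to derive a weighted energy inequality for $h$ and apply Gronwall.

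First, I would write down the PDE satisfied by $h = w - \varphi$. Using \eqref{eq:scale} and \eqref{eq:steady}, $h$ is harmonic in $\Omega$ and on $\partial\Omega$ it satisfies
\[
\partial_\tau(w^p) = -\partial_\nu h - ah + \mathrm{sgn}(\lambda_1)\frac{p}{|p-1|}(w^p-\varphi^p).
\]
Since $\varphi$ is time-independent, $\partial_\tau(w^p) = pw^{p-1}\partial_\tau h$, and $w^p-\varphi^p = \tilde g(x,\tau)\,h$ where $\tilde g(x,\tau) := p\int_0^1(\lambda w + (1-\lambda)\varphi)^{p-1}\,\ud\lambda$. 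By Theorem \ref{thm:wellposedness}, both $w$ and $\varphi$ are bounded above and below by positive constants depending only on the admissible parameters, so $\tilde g$ and $pw^{p-1}$ are likewise pinched between two positive constants. Also, by Theorem \ref{thm:higherestimate}, $\partial_\tau w$ is uniformly bounded on $\overline\Omega\times[1,\infty)$.

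Next I would test the boundary equation against $h$ and integrate over $\partial\Omega$. Writing the left-hand side as $\tfrac{1}{2}\partial_\tau(pw^{p-1}h^2) - \tfrac{1}{2}p(p-1)w^{p-2}(\partial_\tau w)\,h^2$, and using $\int_{\partial\Omega}h\,\partial_\nu h\,\ud S = \int_\Omega |\nabla h|^2\,\ud x \ge 0$ because $\Delta h = 0$, I obtain
\begin{equation*}
\frac{1}{2}\frac{\ud}{\ud\tau}\int_{\partial\Omega}pw^{p-1}h^2\,\ud S \le \int_{\partial\Omega}\Bigl(\tfrac{1}{2}p(p-1)w^{p-2}(\partial_\tau w) - a + \mathrm{sgn}(\lambda_1)\tfrac{p}{|p-1|}\tilde g\Bigr)h^2\,\ud S.
\end{equation*}
The coefficient in parentheses is uniformly bounded on $\partial\Omega\times[1,\infty)$ by a constant $C_0$ depending only on the admissible parameters. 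Since $pw^{p-1}\ge c_0>0$ uniformly, the right-hand side is bounded by $C_1\int_{\partial\Omega}pw^{p-1}h^2\,\ud S$.

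Setting $E(\tau) := \int_{\partial\Omega}pw^{p-1}(\cdot,\tau)\,h^2(\cdot,\tau)\,\ud S$, the differential inequality $E'(\tau)\le 2C_1 E(\tau)$ together with Gronwall yields $E(\tau+t)\le e^{2C_1 t}E(\tau)$ for all $\tau>1,\,t>0$. Using $c_0\le pw^{p-1}\le c_0^{-1}$ to compare $E(\tau)$ with $\|h(\cdot,\tau)\|_{L^2(\partial\Omega)}^2$ finishes the proof. The only delicate point is verifying that $\partial_\tau w$ is uniformly bounded so that the $w^{p-2}\partial_\tau w$ term contributes an $O(1)$ coefficient, but this is exactly what Theorem \ref{thm:higherestimate} provides for $\tau\ge 1$.
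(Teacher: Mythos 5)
Your proof is correct and follows essentially the same route as the paper: test the error equation against $h$, use $\int_{\partial\Omega} h\,\partial_\nu h\,\ud S=\int_\Omega|\nabla h|^2\,\ud x\ge 0$ to drop the Dirichlet-to-Neumann term, bound the remaining coefficients (including the $w^{p-2}\partial_\tau w$ term by Theorem \ref{thm:higherestimate}), and apply Gronwall to the weighted energy $\int_{\partial\Omega}w^{p-1}h^2\,\ud S$. The paper leaves the integration-by-parts bookkeeping implicit; you supply it explicitly, but the argument is identical.
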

\begin{proof}
Consider the parabolic equation of $ w - \varphi$:
\begin{equation}\label{eq:eqforerror0}
pw^{p-1}\partial_{\tau}h=- \mathscr{B} h  -  a h + \mathrm{sgn}(\lda_1) \frac{p}{|p-1|}gh, 
\end{equation}
where
\begin{equation*}
g=\int_0^1p((1-\lambda)\varphi+\lambda w)^{p-1} \,\ud \lambda.
\end{equation*}
Note that $g(x,\tau)$ is a bounded smooth function on $\pa\Omega\times[0,+\infty)$. 
Multiplying $h$ to  both sides of \eqref{eq:eqforerror0} and integrating by parts, and using $1/C \le w \le C$ and $\|\partial_t w\|\le C$ with constant $C > 0$ by means of Theorem \ref{thm:higherestimate}, we obtain
\[
\frac{\ud}{\ud \tau} \int_{\partial\Omega} h^2(\cdot,\tau)w^{p-1}(\cdot,\tau)\,\ud S\le C \int_{\partial\Omega} h^2(\cdot,\tau)w^{p-1}(\cdot,\tau)\,\ud S.
\]
Using Gronwall's inequality and the fact that $1/C \le w \le C$ again, we obtain \eqref{eq:energyjump}.
\end{proof}

\begin{lem}\label{lem:highererror}
Let $\varphi$ be a positive solution of \eqref{eq:steady}, and let $h(x,\tau)=w(x,\tau)-\varphi(x)$. 
Then there exists $C>0$ depending only on $n,p,a,\Omega, \max_{\partial\Omega}u_0$ and $\min_{\partial\Omega}u_0$ such that
\begin{equation}\label{eq:timederivativecontrol}
\|\pa_\tau h(\cdot,  \tau)\|_{C^{3}(\partial \Omega)}+\|h(\cdot,  \tau)\|_{C^{3}(\partial \Omega)}\le C \| h(\cdot,\tau-1) \|_{L^2(\pa \Omega)}\quad\forall\,\tau>1.
\end{equation}
\end{lem}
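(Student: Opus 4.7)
The plan is to bootstrap from an $L^{2}$ bound on $h$ to a $C^{3}$ bound by iteratively applying Lemma~\ref{L2}, combined with Lemma~\ref{lem:expgrouthforerror} and the uniform higher regularity of $w$ from Theorem~\ref{thm:higherestimate}. Because $w$ and $\varphi$ are both harmonic in $\Omega$, so is $h$, hence $\mathscr{B}h=\pa_{\nu}h$, and equation~\eqref{eq:eqforerror0} may be rewritten in the form
\[
\pa_{\tau}h=-\tfrac{1}{p}w^{-(p-1)}\pa_{\nu}h+F(x,\tau)\,h \quad\text{on }\pa\Omega,
\]
with $F=-\tfrac{a}{pw^{p-1}}+\mathrm{sgn}(\lda_{1})\tfrac{g}{|p-1|w^{p-1}}$. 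Theorem~\ref{thm:higherestimate} together with the uniform positive lower bound of $w$ implies that $w^{-(p-1)}$ and $F$ lie in $C^{k}(\overline\Omega\times[1,\infty))$ for every $k$, with bounds independent of time. This is exactly the setup of Lemma~\ref{L2}.

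Fix $\tau>1$ and choose a strictly increasing sequence $\tau-1=t_{0}<t_{1}<t_{2}<\cdots$ converging to $\tau-\tfrac{1}{2}$, together with time cutoff functions $\eta_{m}$ equal to $0$ for $s\le t_{m-1}$ and to $1$ for $s\ge t_{m}$. Set $\phi_{m}=\eta_{m}h$; then $\phi_{m}|_{s=t_{m-1}}\equiv 0$ and a direct computation gives
\[
\pa_{\tau}\phi_{m}=-\tfrac{1}{p}w^{-(p-1)}\pa_{\nu}\phi_{m}+f_{m}, \qquad f_{m}=\eta_{m}Fh+\eta_{m}'h.
\]
Applying Lemma~\ref{L2} at regularity level $m$ and using that $F$ is a uniform smooth multiplier, one obtains
\[
\|h\|_{W^{m+1,2}(\pa\Omega\times[t_{m},\tau])}\le \|\phi_{m}\|_{W^{m+1,2}(\pa\Omega\times[t_{m-1},\tau])}\le C\|f_{m}\|_{W^{m,2}(\pa\Omega\times[t_{m-1},\tau])}\le C\|h\|_{W^{m,2}(\pa\Omega\times[t_{m-1},\tau])}.
\]
Starting the induction at $m=0$ with the base estimate $\|h\|_{L^{2}(\pa\Omega\times[\tau-1,\tau])}\le C\|h(\cdot,\tau-1)\|_{L^{2}(\pa\Omega)}$, which follows by integrating Lemma~\ref{lem:expgrouthforerror} in time, and iterating finitely many times, we reach
\[
\|h\|_{W^{N,2}(\pa\Omega\times[\tau-\tfrac{1}{2},\tau])}\le C\|h(\cdot,\tau-1)\|_{L^{2}(\pa\Omega)}
\]
for any prescribed $N$.

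Choosing $N$ large enough that $W^{N,2}\hookrightarrow C^{4}$ on the $n$-dimensional manifold $\pa\Omega\times[\tau-\tfrac{1}{2},\tau]$ via the Sobolev embedding yields $\|h(\cdot,\tau)\|_{C^{4}(\pa\Omega)}\le C\|h(\cdot,\tau-1)\|_{L^{2}(\pa\Omega)}$. The bound on $\pa_{\tau}h(\cdot,\tau)$ in $C^{3}(\pa\Omega)$ then follows directly from the equation displayed above, since $\pa_{\nu}h(\cdot,\tau)$ is the Dirichlet-to-Neumann image of $h(\cdot,\tau)|_{\pa\Omega}$ and hence is controlled in $C^{3}$ by $\|h(\cdot,\tau)\|_{C^{4}(\pa\Omega)}$, and the coefficients $w^{-(p-1)}$, $F$ are uniformly smooth. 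The main subtlety is purely the bookkeeping in the cutoff iteration, so that the final time interval stays bounded away from $\tau-1$ while each inductive step costs only a controlled amount of derivative regularity and loses only a fixed fraction of the shrinking window $[t_{m},\tau]$; the uniform-in-$\tau$ smoothness of the coefficients provided by Theorem~\ref{thm:higherestimate} is what keeps all constants independent of $\tau$.
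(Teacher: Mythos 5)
Your proposal is correct and follows essentially the same strategy the paper uses: multiply $h$ by a time cutoff, apply Lemma~\ref{L2} iteratively (using Theorem~\ref{thm:higherestimate} to ensure the coefficients are uniformly smooth in time), finish with a Sobolev embedding on the spacetime cylinder, and use Lemma~\ref{lem:expgrouthforerror} to reduce the $L^2$ spacetime norm to the single time slice $\tau-1$. The only cosmetic difference is that you spell out the nested-cutoff bookkeeping that the paper compresses into the phrase ``applying Lemma~\ref{L2} finitely many times,'' and you recover the bound on $\partial_\tau h$ from the boundary equation rather than directly from the spacetime Sobolev embedding (which already controls one time derivative for free); the latter is marginally cleaner since reading $\partial_\tau h$ off the equation invokes the mapping $\mathscr{B}: C^{4}\to C^{3}$, which strictly speaking should be stated in H\"older classes, but this does not affect the substance of the argument.
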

\begin{proof}
Let $\tau_0>1$ be arbitrarily fixed. Let $\eta$ be a smooth cut-off such that $\eta=0$ in $[\tau_0-1,\tau_0-\frac{2}{3}]$,  $\eta=1$ in $[\tau_0-\frac{1}{2},\infty)$ and $|\eta^{(k)}|\le c_k$ for every positive integer $k$, where $c_k$ is a positive constant depending only on $k$. Let $\tilde h(x,\tau)=\eta(\tau) h(x,\tau)$. Then
\[
pw^{p-1}\partial_{\tau}\tilde h=- \mathscr{B} \tilde h  -  a \tilde h + \mathrm{sgn}(\lda_1) \frac{p}{|p-1|}g \tilde h+ pw^{p-1} h \partial_{\tau}\eta\quad\mbox{on }\pa\Omega\times[\tau_0-1,\infty).
\]
Using Theorem \ref{thm:higherestimate}, applying Lemma \ref{L2} finitely many times and using the Sobolev embedding, we have
\[
\|\pa_\tau h(\cdot,  \tau_0)\|_{C^{3}(\partial \Omega)}+\|h(\cdot,  \tau_0)\|_{C^{3}(\partial \Omega)}\le C \|h\|_{L^2(\partial \Omega\times[\tau_0-\frac{2}{3},\tau_0))}.
\]
By \eqref{eq:energyjump}, we have
\[
\| h(\cdot,\tau) \|_{L^2(\pa \Omega)}^2  \le C \| h(\cdot,\tau_0-1) \|_{L^2(\pa \Omega)}^2\quad\mbox{for all }\tau\in[\tau_0-1,\tau_0].
\]
Combining these two estimates, we obtain
\[
\|\pa_\tau h(\cdot,  \tau_0)\|_{C^{3}(\partial \Omega)}+\|h(\cdot,  \tau_0)\|_{C^{3}(\partial \Omega)}\le C \| h(\cdot,\tau_0-1) \|_{L^2(\pa \Omega)}.
\]
Since $\tau_0$ is arbitrary, then the conclusion follows.
\end{proof}

For $v\in H^{\frac12}(\partial \Omega)$ and $v\ge 0$, let
\begin{equation}\label{34}
  \begin{split}
     G(v) = \int_{\partial \Omega}\left[\frac{1}{2} \big( v\mathscr{B}v  + a v^{2}\big) - \frac{\mathrm{sgn}(\lda_1)p}{|p-1|(p+1)}  v^{p+1} \right]\,\ud S,
  \end{split}
\end{equation}
where $\mathscr{B}$ is the Dirichlet to Neumann map  defined in \eqref{eq:DN}. 
Then
$$
\langle G'(v),\phi \rangle = \int_{\partial \Omega} \left( \mathscr{B}v  + a v - \frac{\mathrm{sgn}(\lda_1)p}{|p-1|}  v^{p}\right) \phi \,\ud S\quad\mbox{for all }  \phi \in H^{\frac12}(\partial \Omega).
$$
Then we have
\begin{equation}\label{35}
\begin{split}
 \frac{\ud }{\ud \tau}G(w(\cdot,\tau)) & =\langle G'(w(\cdot,\tau)),w_{\tau}(\cdot,\tau)\rangle \\
        & =      \int_{\partial \Omega}-\frac{\partial}{\partial \tau}(w^{p})  w_{\tau} \,\ud S \                                       \\
        & = - \int_{\partial \Omega}pw^{p-1} w_{\tau}^{2}\,\ud S =  -\frac{4p}{(p+1)^{2}} \int_{\partial \Omega} { \bigg|\frac{\partial}{\partial \tau}\big(w^{\frac{p+1}{2}}\big)    \bigg|   }^{2}\,\ud S    \leq 0.
\end{split}
\end{equation}
That is, $G(w(\cdot,\tau))$ is decreasing in $\tau$. Moreover, it follows from Theorem \ref{thm:wellposedness} that 
$$
G(w(\cdot,\tau)) \geq -M  \quad \mbox{for all }    \tau\geq0,
$$
where $M>0$ depends only on $n,p,a,\Omega, \max_{\partial\Omega}u_0$ and $\min_{\partial\Omega}u_0$. Consequently,
\begin{equation}\label{eq:Glimit}
\lim_{\tau\to+\infty} G(w(\cdot,\tau))\ \mbox{ exists and is finite.}
\end{equation}

Now we give the proof of Theorem \ref{mainTH}.
\begin{proof}[Proof of Theorem \ref{mainTH}]
By Theorem \ref{thm:higherestimate}, there exists a sequence of times ${\tau_{j}}$, $\tau_{j}\to \infty$ as  $j \to \infty$, such that
$$
w(\cdot,\tau_{j}) \to \varphi  \quad  \mbox{as } j \to \infty\quad\mbox{ in }C^2(\overline\Omega).
$$

First, we will show that $\varphi$ is a solution to \eqref{eq:steady}. Integrating \eqref{35} from $\tau_{j}$ to $\tau_{j}+t$, and using the Cauchy-Schwarz inequality, we obtain
\begin{equation*}
\begin{split}
  \int_{\partial \Omega}{|w^{\frac{p+1}{2}}(\cdot,\tau_{j}+t)  -   w^{\frac{p+1}{2}}(\cdot,\tau_{j}) |}^{2}\,\ud S
        & = \int_{\partial \Omega}   \Big(  \int_{\tau_{j}}^{\tau_{j}+t} \frac{\partial}{\partial \tau}( w^{\frac{p+1}{2}} )\,\ud\tau     \Big)^{2}  \,\ud S  \\
        & \leq \int_{\partial \Omega}  t  \int_{\tau_{j}}^{\tau_{j}+t} { \bigg|\frac{\partial}{\partial \tau}\big(w^{\frac{p+1}{2}}\big)    \bigg|   }^{2}\,\ud\tau \ud S \\
        & = \frac{(p+1)^{2}t}{4p} \bigg(  G(w(\cdot,\tau_j))-  G(w(\cdot,\tau_{j}+t)) \bigg).
\end{split}
\end{equation*}
Then by \eqref{eq:Glimit}, we have
\[
  \int_{\partial \Omega}{|w^{\frac{p+1}{2}}(\cdot,\tau_{j}+t)  -   \varphi^{\frac{p+1}{2}} |}^{2}\,\ud S \to 0
\]
locally uniformly in $t$ as $j\to \infty$. Since $1/C \le w \le C$, we have $1/C \le \varphi \le C$. Then by using the inequality 
\[
|w-\varphi|^{\max(2,p+1)}\le C |w^{\frac{p+1}{2}}-\varphi^{\frac{p+1}{2}}|^2,
\] 
we have
$$
w(\cdot,\tau_{j}+t)\to \varphi \quad\mbox{ in }L^{p+1}(\partial\Omega)
$$
locally uniformly in $t$ as $j\to \infty$. Since they are all uniformly bounded in the $C^3$ norm, then by interpolation inequalities, we have
$$
w(\cdot,\tau_{j}+t)\to \varphi \quad\mbox{ in }C^2(\overline\Omega)
$$
locally uniformly in $t$ as $j\to \infty$. By integrating the equation \eqref{eq:scale} on $\partial \Omega \times [\tau_{j},\tau_{j}+1]$, we have for $\forall$ $\phi$ $\in$ $C^{\infty}(\partial \Omega)$ that
\begin{align*}
&\int_{\partial \Omega} \bigg(w^{p}(\cdot,\tau_{j}+1) - w^{p}(\cdot,\tau_{j})\bigg)\phi \,\ud S  \\
&= \int_{0}^{1}\int_{\partial \Omega} \bigg(-\mathscr{B}w(\cdot,\tau_{j}+t) - a w(\cdot,\tau_{j}+t) +\frac{\mathrm{sgn}(\lda_1)p}{|p-1|} w^{p}(\cdot,\tau_{j}+t)\bigg)\phi \,\ud S\ud t.
\end{align*}
Let $j\to \infty$ we have
$$
\int_{\partial \Omega} \bigg(-\mathscr{B}\varphi - a \varphi +\frac{\mathrm{sgn}(\lda_1)p}{|p-1|} \varphi^{p}                                   \bigg)\phi \,\ud S = 0.
$$
Therefore, $\varphi$ is a stationary solution to \eqref{eq:steady}. 

Secondly, we are going to prove that
$$
w(\cdot,\tau) \to \varphi \quad \mbox{as } \tau \to \infty \quad\mbox{ in }C^2(\overline\Omega).
$$
This follows from the same idea as the uniqueness result of Simon \cite{Simon1983Asymptotics}, and the details are given in the below. Denote
$$
\nabla G(w)= \mathscr{B}w + aw - \frac{\mathrm{sgn}(\lda_1)p}{|p-1|}w^{p}.
$$
Hence,
$$
\nabla G(\varphi) = 0.
$$
We also know that $\varphi \in \mathcal{S}_{C}$ for some $C>0$, where $\mathcal{S}_{C}= \{u \in H^{1/2}(\partial \Omega): \frac{1}{C} \leq u \leq C    \}$, and there exists a neighborhood $U$ of $\varphi$, $U \subset \mathcal{S}_{2C} $ such that $G$ is analytic in $U$. Then by Theorem 3 in Simon \cite{Simon1983Asymptotics} (or Corollary 3.11 in Chill \cite{Chill} or Theorem 4.1 of Haraux-Jendoubi \cite{HJ11} for an abstract setting), we have the following {\L}ojasiewicz type inequality:  There exist $C,\delta_{0}>0 $ and $\theta \in (0,1/2]$ such that for every $w\in C^{2,\alpha}(\partial \Omega)$ with $\|w - \varphi \|_{C^{2,\alpha}(\partial \Omega)}<\delta_{0}$, the following inequality holds.
\begin{equation}\label{40}
\|\nabla G(w)\|_{L^{2}(\partial \Omega)} \geq C| G(w)-G(\varphi) |^{1-\theta}.
\end{equation}
From the equality \eqref{35} and uniform bounds of $w$, we find a constant $c_{0}>0$ such that
\begin{equation}\label{41}
\begin{split}
 -\frac{\ud}{\ud\tau} G(w(\cdot,\tau))&\geq c_0\| (w^p)_{\tau}(\cdot,\tau)  \|_{L^{2}(\partial \Omega)}^2 \\&\ge c_{0}\| w_{\tau}(\cdot,\tau)  \|_{L^{2}(\partial \Omega)} \|  \nabla G(w (\cdot,\tau))    \|_{L^{2}(\partial \Omega)}.
 \end{split}
\end{equation}

For any $\varepsilon \in (0,\delta_{0})$, we shall find a $\tilde{\tau}(\varepsilon)$ such that for any $\tau >\tilde{\tau}(\varepsilon)$,
$$
\|w(\cdot,\tau) - \varphi \|_{C^{2,\alpha}(\partial \Omega)}<\varepsilon .
$$

Let $\tau_{0}$ be large enough such that $    \|w(\cdot, \tau_{0} ) - \varphi \|_{L^{2}(\partial \Omega)}<\delta$, where $\delta$ will be chosen later. Then by Lemma \ref{lem:expgrouthforerror}, 
$$
\|w(\cdot, \tau ) - \varphi \|_{L^{2}(\partial \Omega)}<  C_1\delta \ \ \forall \ \tau \in [\tau_{0},\tau_{0}+1].
$$
Then by Lemma \ref{lem:highererror}, we have
\begin{equation}\label{eq:L2C2jump}
\|w(\cdot, \tau ) - \varphi \|_{C^{2,\alpha}(\partial \Omega)}<  C_1C_2\delta \ \ \forall \ \tau \in [\tau_{0}+1,\tau_{0}+2].
\end{equation}
Choose $\delta$ small so that $C_1C_2\delta<\va$, and 
define
$$
T= \sup \big\{ t :    \|w(\cdot, s ) - \varphi \|_{C^{2,\alpha}(\partial \Omega)}<  \varepsilon  \ \ \forall \ s \in [\tau_{0}+1,t]      \big\}.
$$
Consequently,
$$
T \geq \tau_{0}+2.
$$
We are going to prove that $T$ must be $\infty$. If $T<\infty$, then for any $s \in [\tau_{0}+1,T]$, we have
\begin{equation*}
  \begin{split}
  &-\frac{\ud}{\ud\tau}\bigg( G(w(\cdot,\tau))-G(\varphi)             \bigg)^{\theta} \\
  &= -\theta \bigg( G(w(\cdot,\tau))-G(\varphi)             \bigg)^{\theta-1}\frac{\ud}{\ud\tau}G(w(\cdot,\tau)) \\
    & \geq c_{0}\theta \bigg( G(w(\cdot,\tau))-G(\varphi)             \bigg)^{\theta-1}\| w_{\tau}(\cdot,\tau)  \|_{L^{2}(\partial \Omega)} \|  \nabla G(w (\cdot,\tau))    \|_{L^{2}(\partial \Omega)}\\
    & \geq c_{0}\theta\| w_{\tau}(\cdot,\tau)  \|_{L^{2}(\partial \Omega)},
\end{split}
\end{equation*}
where we used \eqref{40} and \eqref{41}. By integrating the above inequality  from $\tau_{0}+1$ to $s$, we obtain
\begin{equation*}
  \begin{split}
\int_{\tau_{0}+1}^{s}c_{0}\theta\| w_{\tau}(\cdot,\tau)  \|_{L^{2}(\partial \Omega)} \,\ud\tau & \leq \bigg( G(w(\cdot, \tau_{0}+1  ))-G(\varphi)             \bigg)^{\theta}- \bigg( G(w(\cdot,s))-G(\varphi)             \bigg)^{\theta} \\
& \leq \bigg( G(w( \cdot,\tau_{0}+1))-G(\varphi)             \bigg)^{\theta}.
\end{split}
\end{equation*}
By the Minkovski integral inequality, we obtain
\begin{equation*}
  \begin{split}
  \| w(\cdot,s)-  w(\cdot, \tau_{0}+1)  \|_{L^{2}(\partial \Omega)}& = \left\| \int_{\tau_{0}+1}^{s} w_{\tau}(\cdot,\tau) \,\ud\tau  \right\|_{L^{2}(\partial \Omega)}\\
& \leq \int_{\tau_{0}+1}^{s}\| w_{\tau}(\cdot,\tau)  \|_{L^{2}(\partial \Omega)}\,\ud\tau \\
& \leq  \frac{1}{c_{0}\theta}   \bigg( G(w(\cdot, \tau_{0}+1  ))-G(\varphi)             \bigg)^{\theta}.
\end{split}
\end{equation*}
Then
\begin{equation*}
  \begin{split}
  \| w(\cdot,s)-  \varphi  \|_{L^{2}(\partial \Omega)}&\leq \| w(\cdot,s)-  w(\cdot, \tau_{0}+1)  \|_{L^{2}(\partial \Omega)} +\| \varphi-  w(\cdot, \tau_{0}+1)  \|_{L^{2}(\partial \Omega)}\\
  & \leq \frac{1}{c_{0}\theta}   \bigg( G(w( \cdot,\tau_{0}+1 ))-G(\varphi)\bigg)^{\theta}+\| \varphi-  w( \cdot,\tau_{0}+1)  \|_{L^{2}(\partial \Omega)}\\
  &\le C (\| \varphi-  w( \cdot,\tau_{0}+1)  \|_{C^{2,\alpha}(\partial \Omega)}^\theta+\| \varphi-  w( \cdot,\tau_{0}+1)  \|_{C^{2,\alpha}(\partial \Omega)})\\
  &\le C_3\delta^\theta
\end{split}
\end{equation*}
for $\forall s \in [\tau_{0}+1,T]$. If we choose $\delta$ smaller such that $C_2C_3\delta^\theta<\va$, then it follows from \eqref{eq:L2C2jump} that
$$
\| w(\cdot,s)-  \varphi  \|_{C^{2,\alpha}(\partial \Omega)}< \varepsilon \quad \forall  s \in [\tau_{0}+2,T+1] .
$$
This is a contradiction to the definition of $T$. Thus $T=\infty$. Therefore,
$$
\| w(\cdot,\tau)-  \varphi  \|_{C^{2}(\partial \Omega)}   \to \  0\quad\mbox{as }\tau \to \infty.
$$

Finally, we can use the decay rate of $G$ to obtain the decay rate of $w$ in $C^{2,\alpha}(\partial \Omega)$.
By \eqref{35},  \eqref{40} and the uniform bounds of $w$, we have
\begin{equation*}
\begin{split}
  \frac{\ud}{\ud\tau}\bigg( G(w(\cdot,\tau))-G(\varphi)             \bigg) &= -\int_{\partial \Omega}\frac{\partial}{\partial \tau}(w^{p})  w_{\tau} \,\ud S \\
                              &\leq -C \int_{\partial \Omega}\big(\frac{\partial}{\partial \tau}(w^{p}(\cdot,\tau))\big)^{2}   \,\ud S\\
                              &= -C\|\nabla G(w(\cdot,\tau))\|_{L^{2}(\partial \Omega)}^{2}\\
                              &\leq -C| G(w(\cdot,\tau))-G(\varphi) |^{2-2\theta}
\end{split}
\end{equation*}
for $\tau \ge \tau_{0}$. For $\theta \in (0,1/2)$, we know
$$
\frac{\ud}{\ud\tau}\bigg( G(w(\cdot,\tau))-G(\varphi)             \bigg)^{2\theta-1} \geq C(1-2\theta),
$$
and thus,
$$
G(w(\cdot,\tau))-G(\varphi) \leq C \tau^{\frac{1}{2\theta-1}}.
$$
Then for $r$ very large,
\begin{equation}\label{48}
\begin{split}
  \int_{r}^{2r}   \| w_{\tau}(\cdot,\tau)  \|_{L^{2}(\partial \Omega)}         \,\ud\tau  &  \leq  r^{1/2}\bigg(\int_{r}^{2r}   \| w_{\tau}(\cdot,\tau)  \|^{2}_{L^{2}(\partial \Omega)}\bigg)^{1/2} \\
  & \leq C  r^{1/2} \bigg(\int_{r}^{2r}  \int_{\partial \Omega} pw^{p-1}   w_{\tau}^{2}  \,\ud S     \bigg)^{1/2}\\
  & = C  r^{1/2} \bigg(G(w(\cdot,r)-G(w(\cdot,2r))     \bigg)^{1/2}\\
  & \leq C  r^{1/2}\bigg(G(w(\cdot,r)-G(\varphi)     \bigg)^{1/2} \\
  & \leq C  r^{1/2} r^{\frac{1}{4\theta-2}}\\
  & \leq C  r^{\frac{\theta}{2\theta-1}},
\end{split}
\end{equation}
where we used \eqref{35} in the first equality. Then
\begin{equation*}
  \begin{split}
    \int_{r}^{\infty}   \| w_{\tau}(\cdot,\tau)  \|_{L^{2}(\partial \Omega)}         \,\ud\tau    &= \sum_{k=0}^{\infty}  \int_{ r\cdot 2^{k} }^{r \cdot 2^{k+1} }   \| w_{\tau}(\cdot,\tau)  \|_{L^{2}(\partial \Omega)}         \,\ud\tau   \\  &\leq C \sum_{k=0}^{\infty} {(r\cdot 2^{k})}^{\frac{\theta}{2\theta-1}}\\
       &= C \sum_{k=0}^{\infty} {(2^{\frac{\theta}{2\theta-1}})}^{k} r^{\frac{\theta}{2\theta-1}} .
  \end{split}
\end{equation*}
Since $\theta \in (0,1/2)$ , we know $2^{\frac{\theta}{2\theta-1}} <1$, and thus, $\sum_{k=0}^{\infty} {(2^{\frac{\theta}{2\theta-1}})}^{k}$ is finite. So
$$
\int_{r}^{\infty}   \| w_{\tau}(\cdot,\tau)  \|_{L^{2}(\partial \Omega)}         \,\ud\tau \leq C r^{\frac{\theta}{2\theta-1}}.
$$
Finally, for $t$ large,
$$
\|w(\cdot, t ) - \varphi \|_{L^{2}(\partial \Omega)}\leq \left\| \int_{t}^{\infty}w_{\tau}(\cdot,\tau)\,\ud\tau \right\|_{L^{2}(\partial \Omega)}\leq  \int_{t}^{\infty}   \| w_{\tau}(\cdot,\tau)  \|_{L^{2}(\partial \Omega)}         \,\ud\tau \leq Ct^{\frac{\theta}{2\theta-1}}.
$$
Then it follows from Lemma \ref{lem:highererror} that
$$
\|w(\cdot, \tau) - \varphi \|_{C^{2,\alpha}(\partial \Omega)}\leq C\tau^{-\gamma},
$$
where $\gamma= \frac{\theta}{1-2\theta}>0$.

If $\varphi$ is integrable, then by Lemma 1 of Adams-Simon \cite{AS} (see also Corollary 3.12 of Chill \cite{Chill}), the {\L}ojasiewicz inequality \eqref{40} holds for $\theta=\frac12$. That is,
\begin{equation*}
  \begin{split}
G(w(\cdot,\tau))-G(\varphi)&\le C\left\| -\pa_\nu  w(\cdot,\tau) - a w(\cdot,\tau)+\mathrm{sgn}(\lda_1)\frac{p}{|p-1|}{w(\cdot,\tau)}^{p}\right\|^2_{L^2(\pa \om)}\\
&= C{\| \pa_\tau (w^p)       \|}^2_{L^2(\pa \om)}.
  \end{split}
\end{equation*}
Therefore,
\begin{align*}
\frac{\ud}{\ud\tau}\bigg[ G(w(\cdot,\tau))-G(\varphi)\bigg] &= -\int_{\partial \Omega}(w^{p})_{\tau}   w_{\tau} \,\ud S\\
& \le -C{\| \pa_\tau (w^p)\|}^2_{L^2(\pa \om)} \\
&\le -C   (G(w(\cdot,\tau))-G(\varphi)).
\end{align*}
So by Gronwall's inequality, we have
$$
G(w(\cdot,\tau))-G(\varphi) \le Ce^{-4\gamma \tau}
$$
for some $C>0$ and $\gamma>0$. Similar to \eqref{48}, we have
\begin{equation*}
  \int_{r}^{2r}   \| w_{\tau}(\cdot,\tau)  \|_{L^{2}(\partial \Omega)}         \,\ud\tau    \leq C  r^{1/2}\bigg(G(w(\cdot,r)-G(\varphi)     \bigg)^{1/2}  \leq C  r^{1/2} e^{-2\gamma r} \leq C   e^{-\gamma r}
\end{equation*}
for $r$ sufficiently large. Then
\begin{equation*}
    \int_{r}^{\infty}   \| w_{\tau}(\cdot,\tau)  \|_{L^{2}(\partial \Omega)}         \,\ud\tau   = \sum_{k=0}^{\infty}  \int_{ r\cdot 2^{k} }^{r \cdot 2^{k+1} }   \| w_{\tau}(\cdot,\tau)  \|_{L^{2}(\partial \Omega)}         \,\ud\tau     \leq C \sum_{k=0}^{\infty} e^{-\gamma  r \cdot 2^{k}}\leq C e^{-  \gamma  r }
\end{equation*}
for $r$ sufficiently large. Finally, for $t$ large
$$
\|w( \cdot,t ) - \varphi \|_{L^{2}(\partial \Omega)}\leq \left\| \int_{t}^{\infty}w_{\tau}(\cdot,\tau)\,\ud\tau\right\|_{L^{2}(\partial \Omega)}\leq  \int_{t}^{\infty}   \| w_{\tau}(\cdot,\tau)  \|_{L^{2}(\partial \Omega)}         \,\ud\tau \leq Ce^{- \gamma  t}.
$$
Then it follows from Lemma \ref{lem:highererror} that
$$
\|w( \cdot,\tau) - \varphi \|_{C^{2}(\partial \Omega)}\leq Ce^{-\gamma \tau}\quad\mbox{for all }\tau>1.
$$

This finishes the proof of parts (ii), (iv) of Theorem \ref{mainTH}, as well as the upper bound in \eqref{eq:convergencerate3}. 

For part (i) of Theorem \ref{mainTH}, we know  from part (i) of Proposition \ref{cor:boundednessbefore} that
\begin{equation*}
\|w( \cdot,\tau) - \varphi \|_{L^{\infty}(\partial \Omega)}\leq Ce^{- \tau}\quad\mbox{for all }\tau>1.
\end{equation*}
It follows from Lemma \ref{lem:highererror} that
$$
\|w( \cdot,\tau) - \varphi \|_{C^{2}(\partial \Omega)}\leq Ce^{- \tau}\quad\mbox{for all }\tau>1.
$$
This finishes the proof of part (i) of Theorem \ref{mainTH}. 

The dichotomy between \eqref{eq:convergencerate4} and the lower bound in \eqref{eq:convergencerate3} follows from Theorem \ref{either fast or slower} in the next subsection.
\end{proof}

\subsection{Sharp rates and higher order asymptotics}
In this subsection, we adapt the argument of Choi-McCann-Seis \cite{CMS} to show that the solutions of \eqref{eq:scale} will converge to steady states either exponentially with the sharp rate or algebraically slow with the rate $\tau^{-1}$, and also to obtain higher order asymptotics.

Let us assume all the assumptions in Theorem \ref{mainTH}, and $w(\cdot,\tau)\to\varphi$ in $C^2(\partial\Omega)$, where $\varphi$ is a positive smooth solution of \eqref{eq:steady}. Let $\mathcal{L}_{\varphi}$ be defined in \eqref{eq:linearized}. Then  the weighted eigenvalue problem
\be \label{eigenproblem}
\mathcal{L}_{\varphi} (e)  =\mu \varphi^{p-1}e   \quad  \mbox{on }   \partial \Omega
\ee
admits eigenpairs $\{ (\mu_j,e_j)       \}_{j=1}^{\infty}$ such that

\begin{itemize}
           \item  the eigenvalues with multiplicities can be listed as $\mu_1<\mu_2\le \mu_3 \le \cdots \le \mu_j \to +\infty$ as  $j \to +\infty$,
           \item the eigenfunctions $\{ e_j      \}_{j=1}^{\infty}$ forms a complete orthonormal basis of  $L^2(\pa \om; \varphi^{p-1}\,\ud S) $, that is, $\int_{\partial \Omega} e_i e_j \varphi^{p-1}  \,\ud S =  \delta_{ij}$ for $i,j \in \mathbb{N} $ .
\end{itemize}
Since $\varphi$ is a positive solution to \eqref{eq:steady}, then 
\[
\mu_1=- \mathrm{sgn}(\lda_1(p-1)) p\quad\mbox{and}\quad e_1=\varphi / \| \varphi  \|_{L^2(\pa \om; \varphi^{p-1}\,\ud S)}. 
\]

We suppose $I$ is the dimension of the eigenspace corresponding to all the negative eigenvalues, if negative eigenvalues exist, and we denote $\mu_I$ as the largest negative eigenvalue. If $\mu_1\ge 0$, then there are no negative eigenvalues, and we just let $I=0$. Let $K$ be the multiplicity of the zero eigenvalue.  Let $k=I+K+1$ so that $\mu_k$ is the smallest positive eigenvalue. For example, if $\mu_1<0$, then  we can list the eigenvalues with multiplicities as
\[
- \mathrm{sgn}(\lda_1(p-1)) p=\mu_1<\mu_2\le \cdots \le \mu_I <0= \mu_{I+1}=\cdots =\mu_{I+K}<\mu_k\le \cdots.
\] 
Denote
\be \label{sharprate}
\gamma_p = \frac{\mu_k }{p}.
\ee
In particular, if $\lambda_1(p-1)<0$, then $k=1$, $\mu_1=p$, $e_1=\varphi$, and thus, $\gamma_p=1$.

As in \cite{CMS}, we call the eigenfunctions corresponding to the negative eigenvalues the unstable modes, those  corresponding to $\mu_{I+1}$ to $\mu_{I+K}$ the central modes and the remaining eigenfunctions the stable modes. Their corresponding eigenspaces are denoted as $E_u$, $E_c$ and $E_s$. Then we have $ L^2(\pa \om; \varphi^{p-1}\,\ud S) = E_u\oplus E_c\oplus E_s $.  

As before, we let $h=w-\varphi$. Then by  \eqref{eq:stationary} and \eqref{eq:scale},  $h$ satisfies
\begin{equation}\label{eq:error}
p\varphi^{p-1}\pa_\tau h + \mathcal{L}_{\varphi} h = N(h),
\end{equation}
where $\mathcal{L}_{\varphi}$ is the linearized operator defined in \eqref{eq:linearized} and $N(h)$ is the nonlinearity
\begin{equation}\label{eq:nonlinearity}
\begin{split}
& N(h)= \mathrm{sgn}(\lda_1)\frac{p}{|p-1|}\left(\left(h+\varphi\right)^{p}-\varphi^{p}-ph\varphi^{p-1}\right)+p\left(\varphi^{p-1}- \left(h+\varphi\right)^{p-1} \right)\pa_\tau h .
\end{split}
\end{equation}

\begin{thm}\label{either fast or slower}
Assume the assumptions in Theorem \ref{mainTH}. Suppose $w(x,\tau)$ converges to a positive solution $\varphi$ of \eqref{eq:steady} in $C^{2}{(\overline\om)}$ as $\tau\to \infty$. Then $h:=w-\varphi$ satisfies the following dichotomy for some  $C>0$ depending  only on $n,p,a,\Omega, \max_{\partial\Omega}u_0$ and $\min_{\partial\Omega}u_0$:
\begin{itemize}
\item[(a)]
either $h$ decays algebraically or slower, that is,
\[
\| h(\cdot,\tau) \|_{L^2(\pa \Omega)} \ge C\tau^{-1}  \quad \forall\ \tau >1;
\]
\item[(b)]
or $h$ decays exponentially or faster, that is,
\[
\| h(\cdot,\tau) \|_{C^2(\pa \Omega)} \le Ce^{-\gamma_p \tau}    \quad  \forall\ \tau >1, 
\]
where $\gamma_p>0$ is defined in \eqref{sharprate}.
\end{itemize}
\end{thm}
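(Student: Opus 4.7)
The plan is to expand $h:=w-\varphi$ in the weighted eigenbasis $\{e_j\}_{j\ge 1}$ of \eqref{eigenproblem} and analyze the resulting system of ODEs for the coefficients $a_j(\tau):=\int_{\partial\Omega}h(\cdot,\tau)\,e_j\,\varphi^{p-1}\,\ud S$. Pairing \eqref{eq:error} with $e_j$ and using that $\mathcal{L}_\varphi$ is self-adjoint against the measure $\varphi^{p-1}\,\ud S$ yields
\[
p\dot a_j+\mu_j a_j = N_j(\tau):=\int_{\partial\Omega} N(h)\,e_j\,\ud S,
\]
and Taylor-expanding \eqref{eq:nonlinearity}, together with the $C^3$-bounds of Lemma \ref{lem:highererror} and the already-proved $C^2$-convergence $h\to 0$ from Theorem \ref{mainTH}, gives the crucial quadratic estimate $|N_j(\tau)|\le C\|h(\cdot,\tau)\|_{L^2(\partial\Omega)}^2$ for $\tau$ large. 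Decompose $h=h_u+h_c+h_s$ along the unstable ($\mu_j<0$), central ($\mu_j=0$) and stable ($\mu_j\ge\mu_k>0$) subspaces; note $\dim(E_u\oplus E_c)<\infty$ since $\mathcal{L}_\varphi$ has compact resolvent.

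\emph{Control of the hyperbolic parts.} For stable modes, Duhamel gives
\[
\|h_s(\cdot,\tau)\|_*\le Ce^{-\gamma_p\tau}+C\int_0^\tau e^{-\gamma_p(\tau-s)}\|h(\cdot,s)\|_{L^2}^2\,\ud s,
\]
while for each unstable coefficient the homogeneous solution grows like $e^{|\mu_j|\tau/p}$, so the convergence $a_j(\tau)\to 0$ together with the variation-of-constants formula integrated backward from $+\infty$ yields $|a_j(\tau)|\le C\sup_{s\ge\tau}\|h(\cdot,s)\|_{L^2}^2$. Consequently $\|h_u\|_*$ and the nonlinear contribution to $\|h_s\|_*$ are both quadratic in $\|h\|_{L^2}$; by a standard bootstrap, the asymptotic rate of $\|h\|_{L^2}$ is either that of $h_c$ or (when $h_c$ is eventually negligible) the exponential rate $\gamma_p$ coming from $h_s$.

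\emph{Central-manifold dichotomy.} Projecting \eqref{eq:error} onto the finite-dimensional $E_c$ gives a vector ODE $p\dot a_c=\Phi(a_c)+\mathcal{E}(\tau)$, with $\Phi$ the quadratic self-interaction of central modes (the next-to-leading term of $N$, since $\mathcal{L}_\varphi|_{E_c}=0$) and $|\mathcal{E}(\tau)|\le C(\|h_u\|_*+\|h_s\|_*)\|h\|_*+C\|h_c\|_*^3$. Following the argument of Choi-McCann-Seis \cite{CMS} adapted to the present nonlocal boundary setting, we compare $G|_{E_c}$ to its cubic leading Taylor polynomial at $\varphi$ and exploit the gradient-like identity \eqref{35}: setting $y(\tau):=\|h_c(\cdot,\tau)\|_*$, one derives the two-sided differential inequality
\[
-Cy^3-C(\|h_u\|_*+\|h_s\|_*)y \;\le\; p\dot y \;\le\; -c\,y^2+Cy^3+C(\|h_u\|_*+\|h_s\|_*)y,
\]
with $c\ge 0$. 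Either the constant $c$ is strictly positive along the trajectory (equivalently, the leading quadratic $\Phi$ is nondegenerate along $a_c$), in which case integrating the right inequality and combining with the quadratic control $\|h_u\|_*+\|h_s\|_*\lesssim y^2$ forces $y(\tau)\le C/\tau$ from above, while the left inequality prevents $y$ from decaying any faster, giving $y(\tau)\ge C/\tau$ and thus case (a); or this quadratic vanishes, whereupon higher-order terms plus the stable control force $y$ to decay exponentially, and combining with the quadratic bounds on $h_u,h_s$ yields $\|h\|_{L^2}\le Ce^{-\gamma_p\tau}$, i.e.\ case (b).

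\emph{Upgrade to $C^2$ and main obstacle.} In case (a) the stated lower bound is already in $L^2(\partial\Omega)$. In case (b), the $L^2$-exponential bound is promoted to the $C^2$-bound in \eqref{eq:convergencerate4} via Lemma \ref{lem:highererror}, applied on intervals $[\tau-1,\tau]$. The main obstacle I expect is the rigorous reduction to the finite-dimensional centre dynamics: one must verify that the cross-interaction errors $\mathcal{E}(\tau)$ are genuinely lower-order than the leading quadratic of $\Phi$ \emph{along the trajectory}, and close a two-sided bootstrap so that $\|h_u\|_*+\|h_s\|_*=O(y^2)$ holds throughout the time evolution rather than only in the limit; this requires carefully iterating the Duhamel estimates together with the backward integral representation for the unstable part, using the uniform regularity from Theorem \ref{thm:higherestimate}.
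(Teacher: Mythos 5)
Your setup is right: you use the same weighted eigen-decomposition, the same unstable/central/stable splitting, the same key quadratic estimate on the nonlinearity, and the same upgrade to $C^2$ via Lemma~\ref{lem:highererror}. Your Duhamel control of $h_u$ and $h_s$ is morally the same as the paper's differential inequalities \eqref{eq:error-CHHrefinement11}--\eqref{eq:error-CHHrefinement33}. The problem is the step you call ``Central-manifold dichotomy'': it is not the dichotomy that actually drives the theorem, and as formulated it does not close.

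The paper does not decide between (a) and (b) by asking whether a quadratic self-interaction $\Phi$ on $E_c$ is nondegenerate. Instead it feeds the three differential inequalities into the Merle--Zaag type ODE lemma (Lemma~4.6 of Choi--Haslhofer--Hershkovits \cite{CHHrefine}, a refinement of \cite{MZ}), which produces exactly the two regimes in \eqref{eq:central dominates} versus \eqref{eq:stable dominates}: either $\|h_u\|+\|h_s\|=o(\|h_c\|)$, or $\|h_u\|+\|h_c\|\lesssim\epsilon\|h_s\|$ for $\tau$ large. This is precisely the ``two-sided bootstrap'' you identify as the main obstacle; it is taken care of by invoking that lemma, not by analyzing the sign or rank of a quadratic form. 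Once central modes dominate, the algebraic lower bound already follows from $\frac{\ud}{\ud\tau}\|h_c\|\ge -C\|h_c\|^2$ alone (integrate in $\tau$), with no nondegeneracy hypothesis; once stable modes dominate, one obtains $e^{-(\mu_k-2\epsilon)\tau/p}$ decay, and a second pass through \eqref{eq:error-CHHrefinement33} with the now-improved nonlinearity bound removes the $\epsilon$ and yields rate $\gamma_p$ exactly.

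Your version, by contrast, would mis-sort cases: if the central modes dominate but the quadratic $\Phi$ along $a_c$ vanishes (it is perfectly possible for the trajectory to live in a degenerate direction of $\Phi$), your argument sends you to case (b) with exponential decay, which is false --- $h_c$ would then be driven by cubic or higher terms and could decay like $\tau^{-1/2}$ or slower, still satisfying the lower bound of case (a). That is exactly why the theorem says ``algebraically \emph{or slower}'' and asserts only a one-sided bound $\ge C\tau^{-1}$ in case (a); there is no matching upper bound $\le C\tau^{-1}$ here (the $\tau^{-\delta}$ upper bound in \eqref{eq:convergencerate3} comes from the {\L}ojasiewicz argument elsewhere, and $\delta$ is typically not $1$). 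Your claimed two-sided $y(\tau)\asymp\tau^{-1}$ in case (a) overreaches, and your claim that ``the quadratic vanishes $\Rightarrow$ exponential decay'' is incorrect. To fix the proof, replace the quadratic-nondegeneracy dichotomy with the invocation of the CHH/Merle--Zaag lemma, and then treat the two regimes exactly as above.
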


\begin{proof}
Let $P_u$, $P_c$ and $P_s$ be the orthogonal projections of $L^2(\pa \om; \varphi^{p-1}\,\ud S)$ onto the eigen-spaces $E_u$, $E_c$ and $E_s$, respectively. From now on, for the sake of convenience we use
 \[
 \|f\|:=\| f  \|_{L^2(\pa \om; \varphi^{p-1}\,\ud S)}, \quad \langle f,g \rangle := \int_{\pa \Omega} fg {\varphi}^{p-1} \,\ud S,
 \]
and $h_u$, $h_c$, $h_s$ denote the projected solutions $P_u h$, $P_c h$, $P_s h$, respectively. Then we have 
\begin{align}
&\frac{\ud}{\ud\tau}\|h_u\| \ge -\frac{\mu_I}{p}\|h_u\|-\frac{1}{p}\|N(h)\|,\label{eq:error-CHHrefinement11}\\
&\left| \frac{\ud}{\ud\tau}\|h_c\| \right| \le \frac{1}{p}\|N(h)\|,\label{eq:error-CHHrefinement22}\\
&\frac{\ud}{\ud\tau}\|h_s\| \le -\frac{\mu_k}{p}\|h_s\|+\frac{1}{p}\|N(h)\|,\label{eq:error-CHHrefinement33}
\end{align}
which are obtained by multiplying corresponding eigenfunctions to \eqref{eq:error} and integrating.

Next, we recall from Theorem \ref{mainTH} that 
\[
\|h(\cdot, \tau) \|_{C^{2}(\partial \Omega)}=o(1)  \mbox{ as }   \tau \to \infty.
\]
Hence, it follows from \eqref{eq:timederivativecontrol} that for any small $\epsilon>0$, there exists a time $\tau_0(\epsilon)$ such that  $|h|,|\pa_\tau h| \le \epsilon$    for all $\tau \ge \tau_0(\epsilon)$. By the Taylor expansion, we derive 
\begin{equation}\label{eq:nonlinearity-Taylor}
|N(h)|\le C|h|\left(|h|+|\pa_\tau h|\right).
\end{equation}
Therefore, 
\begin{equation}\label{eq:nonlinearity-highorder of h}
\|N(h(\cdot, \tau))\|\le \epsilon\|h(\cdot, \tau)\| \mbox{ for } \tau \ge \tau_0(\epsilon)  .
\end{equation}
Plugging \eqref{eq:nonlinearity-highorder of h} into \eqref{eq:error-CHHrefinement11}, \eqref{eq:error-CHHrefinement22} and \eqref{eq:error-CHHrefinement33}, and using the change of variables $s=\lambda \tau$, we obtain
\begin{equation}\label{eq:error-CHHrefinement2}
\begin{split}
\frac{\ud}{\ud s}\|h_u\|  -\frac{|\mu_I|}{p\lambda}\|h_u\|&\ge-\frac{\epsilon}{p\lambda}\|h\|,\\
\left| \frac{\ud}{\ud s}\|h_c\| \right| &\le \frac{\epsilon}{p\lambda}\|h\|,\\
\frac{\ud}{\ud s}\|h_s\| +\frac{\mu_k}{p\lambda}\|h_s\|&\le \frac{\epsilon}{p\lambda}\|h\|,
\end{split}
\end{equation}
where
\[
\lambda=\left\{
  \begin{aligned}
  & \frac{\mu_k}{2p}    \quad  \mbox{if }   I=0\ (\mbox{i.e., there are no negative eigenvalues}), \\
  &  \frac{1}{2p}\min\{|\mu_I|,\mu_k\}   \quad   \mbox{if }    I\neq 0.
\end{aligned}
\right.
\]
Then $h_u$, $h_c$, $h_s$ satisfy the hypothesis in Lemma 4.6 of Choi-Haslhofer-Hershkovits \cite{CHHrefine} (which is a slight refinement of Lemma A.1 of Merle-Zaag \cite{MZ}). Therefore, either 
\be \label{eq:central dominates}
\| h_u \|+\| h_s \| = o(\| h_c \|) \mbox{ as } \tau \to \infty
\ee
or
\be \label{eq:stable dominates}
\| h_u \|+\| h_c \| \le \frac{100\epsilon}{\lambda p } \| h_s \| \mbox{ for } \tau \ge \tau_0(\epsilon)  
\ee
except that $h=0$.
We investigate these two alternatives separately.

\textbf{Case 1. } If  \eqref{eq:central dominates} holds, we can assume that 
\[
\| h_u \|+\| h_s \| \le \frac{1}{2} \| h_c \|  \mbox{ for } \tau \gg 1.
\]
Then by \eqref{eq:nonlinearity-highorder of h} and \eqref{eq:error-CHHrefinement22}, we have
\[
\frac{\ud}{\ud\tau}\|h_c\|  \ge -\frac{3\epsilon}{2p}\|h_c\|.
\]
Hence,
\[
\|h(\cdot, \tau)\|\le 2 \|h_c(\cdot, \tau)\|\le C  \|h_c(\cdot, \tau+1)\|\le C  \|h(\cdot, \tau+1)\|.
\]
Then by  \eqref{eq:timederivativecontrol} and \eqref{eq:nonlinearity-Taylor}, we drive
\begin{equation*}
\begin{split}
\|N(h(\cdot, \tau))\|  & \le C\|h(\cdot, \tau)\|\left(\|h(\cdot, \tau)\|+\|\pa_\tau h(\cdot, \tau)\|\right)\\
& \le C\|h(\cdot, \tau)\|\left(\|h(\cdot, \tau)\|+\|h(\cdot, \tau-1)\|\right) \\
&\le  C\|h(\cdot, \tau)\|^2\\
& \le C\|h_c(\cdot, \tau)\|^2 \mbox{ for } \tau \gg1.
\end{split}
\end{equation*}
Consequently, \eqref{eq:error-CHHrefinement22} becomes
\[
\frac{\ud}{\ud\tau}\|h_c\|  \ge -C\|h_c\|^2,
\]
which leads to a lower bound
\[
\|h(\cdot, \tau)\| \ge \|h_c(\cdot, \tau)\| \ge (C\tau)^{-1} \mbox{ for } \tau \gg 1.
\] This proves the first alternative in Theorem \ref{either fast or slower}.

\textbf{Case 2. }
If  \eqref{eq:stable dominates} holds, then
we can assume that 
\[
\| h_u \|+\| h_c \| \le \frac{1}{2} \| h_s \|  \mbox{ for } \tau \gg 1.
\]
Then by \eqref{eq:nonlinearity-highorder of h} we obtain
\[
\|N(h(\cdot, \tau))\|\le 2\epsilon\|h_s(\cdot, \tau)\|  \mbox{ for } \tau \ge \tau_0(\epsilon)\gg 1.
\]
Consequently, \eqref{eq:error-CHHrefinement33} becomes
\[
\frac{\ud}{\ud\tau}\|h_s\|  \le -\frac{1}{p}(\mu_k - 2\epsilon)\|h_s\|, 
\]
which, by Gronwall's inequality, leads to an exponential decay
\[
\|h(\cdot, \tau)\| \le 2\|h_s(\cdot, \tau)\| \le Ce^{-\frac{1}{p}(\mu_k - 2\epsilon)(\tau-\tau_0)  }\|h_s(\tau_0)\|   \mbox{ for } \tau \gg 1.
\]
By  \eqref{eq:timederivativecontrol} and \eqref{eq:nonlinearity-Taylor}, we have
\begin{equation*}
\begin{split}
\|N(h(\cdot, \tau))\|  &  \le C\|h(\cdot, \tau)\|\left(\|h(\cdot, \tau)\|+\|\pa_\tau h(\cdot, \tau)\|\right)\\
& \le C\|h(\cdot, \tau)\|\left(\|h(\cdot, \tau)\|+\|h(\cdot, \tau-1)\|\right) \\
 & \le C e^{-\frac{2}{p}(\mu_k - 2\epsilon)(\tau-\tau_0)   }\|h(\cdot, \tau_0)\|^2  \mbox{ for } \tau \gg 1  .
\end{split}
\end{equation*}
Plugging  this into \eqref{eq:error-CHHrefinement33} and using Gronwall's argument again, it yields
\[
\| h(\cdot, \tau)  \| \le Ce^{-\frac{\mu_k}{p}\tau }\|h(\cdot, \tau_0)\| \mbox{ for } \tau \gg1  .
\]
Then part $(b)$ in Theorem \ref{either fast or slower} follows from \eqref{eq:timederivativecontrol}. 

This finishes the proof of Theorem \ref{either fast or slower}.
\end{proof}

In fact, if part $(b)$ in Theorem \ref{either fast or slower} happens, then one can obtain higher order expansions of the solution as follows. First of all, the estimate of the nonlinearity improves as
\be \label{eq:nonlinearity exponetial improvement}
\|N(h(\cdot, \tau))\| \le Ce^{-2\gamma_p \tau} \| h(\cdot, \tau_0)  \|^2\quad\mbox{for all }  \tau\gg 1.
\ee
For $i \in \mathbb{N}$, let $y_i(\tau):= \langle h(\cdot, \tau),e_i \rangle$. Then these projections satisfy
\[
\left|\frac{\ud}{\ud\tau}y_i + \frac{\mu_i}{p}y_i \right| \le \|N(h)\|.
\]
Using \eqref{eq:nonlinearity exponetial improvement} we get
\[
\left|\frac{\ud}{\ud\tau}\left( e^{\frac{\mu_i}{p}\tau} y_i   \right) \right| \le Ce^{-(2\gamma_p-\frac{\mu_i}{p})\tau}.
\]
Let $J\in \mathbb{N}$ is chosen such that $\frac{\mu_{I+J}}{p}< 2\gamma_p\le \frac{\mu_{I+J+1}}{p}$. Then $J\ge K+1$. For $i\in \{I+K+1,  \cdots, I+J \}$, by integrating the above inequality in the  time variable, we obtain 
\[
\left|   e^{\frac{\mu_i}{p}\tau}y_i(\tau)   -  e^{\frac{\mu_i}{p}T}y_i(T)       \right| \le Ce^{-(2\gamma_p-\frac{\mu_i}{p})\tau}
\]
for any $T \ge \tau \gg 1$. This inequality tells us that $\tau \mapsto e^{\frac{\mu_i}{p}\tau}y_i(\tau)$ is a Cauchy sequence, and therefore, 
\[
e^{\frac{\mu_i}{p}\tau}y_i(\tau) \to C_i \in \R  \text{ as } \tau \to \infty
\]
for $i\in \{I+K+1, \cdots, I+J \}$. We rewrite this condition as 
\be \label{eq:decayrate of projection y_i}
y_i(\tau) = C_i e^{-\frac{\mu_i}{p}\tau} + O(e^{-2\gamma_p \tau}  ).
\ee
Next, with this decay in mind, we have
\begingroup
\allowdisplaybreaks
\begin{align}
 &\left\|h-\sum_{i=I+K+1}^{I+J} C_i e^{-\frac{\mu_i}{p} \tau} e_i\right\| \nonumber\\
 &\le \left\|h_s-\sum_{i=I+K+1}^{I+J} C_i e^{-\frac{\mu_i}{p} \tau} e_i\right\|+\| h_u \|+\| h_c \|  \nonumber\\
 &\le\left\|\sum_{i=I+K+1}^{I+J} \left( y_i- C_i e^{-\frac{\mu_i}{p} \tau} \right) e_i\right\| +\left\|h_s-\sum_{i=I+K+1}^{I+J} y_i e_i\right\|+\| h_u \|+\| h_c \| \nonumber\\
 &\le  \sum_{i=I+K+1}^{I+J} \left| y_i- C_i e^{-\frac{\mu_i}{p} \tau} \right| +\left\|h_s-\sum_{i=I+K+1}^{I+J} y_i e_i\right\|+\| h_u \|+\| h_c \|. \label{eq:error-decomposition-leading term}
\end{align}
\endgroup
The first term on the right hand side of \eqref{eq:error-decomposition-leading term} is bounded by $Ce^{-2\gamma_p \tau}$ as we showed in \eqref{eq:decayrate of projection y_i}. For the second term on the right hand side of \eqref{eq:error-decomposition-leading term}, we let
$$
z:=\left\|h_s-\sum_{i=I+K+1}^{I+J} y_i e_i\right\| = \left\|\sum_{i=I+J+1}^{\infty} y_i e_i\right\|. 
$$
Then it satisfies
$$
\frac{\ud}{\ud\tau}z+\frac{\mu_{I+J+1}}{p}z\le C\|N(h)  \| \le Ce^{-2\gamma_p\tau},
$$
which can be obtained in a similar way to that of  \eqref{eq:error-CHHrefinement33}. Then it becomes
$$
\frac{\ud}{\ud\tau}\left( e^{\frac{\mu_{I+J+1}}{p}\tau}z   \right) \le Ce^{-(2\gamma_p-\frac{\mu_{I+J+1}}{p})\tau}.
$$
By Gronwall's argument again, we have
\begin{equation*} 
z(\tau)  \leq C\|h(\tau_0)\| \times \begin{cases}e^{-2 \gamma_p \tau} & \text { if } \mu_{I+J+1}>2 \gamma_p p, \\ \tau e^{-2 \gamma_p \tau} & \text { if } \mu_{I+J+1}= 2 \gamma_p p, 
\end{cases} \quad \tau \gg 1.
\end{equation*}
For the remaining two terms on the right hand side of \eqref{eq:error-decomposition-leading term}, by using \eqref{eq:error-CHHrefinement11} and \eqref{eq:error-CHHrefinement22}, we obtain a differential inequality
$$
\frac{\ud}{\ud\tau}(\| h_u \|+\|h_c\|) \ge -C e^{-2\gamma_p \tau} \mbox{ for } \tau \gg 1  .
$$
Then an integration from $\tau$ to $\infty$ leads to 
$$
\| h_u \|+\|h_c\| \le C e^{-2\gamma_p \tau},
$$
with the help of the alternative $(b)$ that 
$$
\| h_u \|+\|h_c\| \to 0 \mbox{ as } \tau \to \infty.
$$
Finally, plugging the above estimates into \eqref{eq:error-decomposition-leading term}, we obtain  \be \label{eq:error-exp-decay-rate-classify}
\left\|h(\cdot,\tau)-\sum_{i=I+K+1}^{I+J} C_i e^{-\frac{\mu_i}{p} \tau} e_i\right\|\leq  \begin{cases} 
Ce^{-2 \gamma_p \tau} & \text { if } \mu_{I+J+1}>2 \gamma_p p, \\  
C\tau e^{-2 \gamma_p \tau} & \text { if } \mu_{I+J+1}= 2 \gamma_p p, 
\end{cases} 
\ee
for all $\tau \gg 1$ and some $C=C(n,p,a,\Omega,\varphi)$. 

Finally, we can keep expanding the solution up to an arbitrary order in the same way as that Han-Li-Li \cite{HLL} did for the singular Yamabe equation. In the final expansion, the exponential exponents are not only the $\{\mu_j/p\}$ but also some of their linear combinations.

\small

\bigskip

\noindent T. Jin

\noindent Department of Mathematics, The Hong Kong University of Science and Technology\\
Clear Water Bay, Kowloon, Hong Kong\\
Email: \textsf{tianlingjin@ust.hk}

\medskip

\noindent J. Xiong

\noindent School of Mathematical Sciences, Laboratory of Mathematics and Complex Systems, MOE\\
Beijing Normal University, Beijing 100875, China\\
Email: \textsf{ jx@bnu.edu.cn}

\medskip

\noindent X. Yang

\noindent Department of Mathematics, The Hong Kong University of Science and Technology\\
Clear Water Bay, Kowloon, Hong Kong\\
Email: \textsf{maxzyang@ust.hk}

\end{document}